\title[]{Maximal function associated to the bounded 
law of the iterated logarithms via orthomartingale 
approximation}
\keywords{random fields, bounded law of the iterated logarithms}
\subjclass[2010]{60G60; 60G10}
\date{\today}
\author[Davide Giraudo]{Davide 
Giraudo \\
davide.giraudo@rub.de    }
\numberwithin{equation}{subsection}
\renewcommand{\leq}{\leqslant}
\renewcommand{\geq}{\geqslant}
\newtheorem{Theorem}{Theorem}[section]
\newtheorem{Th\'eor\`eme}{Th\'eor\`eme}[section]
\newtheorem{Proposition}[Theorem]{Proposition}
\newtheorem{Lemma}[Theorem]{Lemma}
\newtheorem{Definition}[Theorem]{Definition}
\newtheorem{D\'efinition}[Th\'eor\`eme]{D\'efinition}
\newtheorem{Corollary}[Theorem]{Corollary}
\theoremstyle{remark}
\newtheorem{Remark}[Theorem]{Remark}
\tikzstyle{Vertex}=[circle,draw=LimeGreen!80,fill=LimeGreen!8,
\tikzstyle{Node}=[Vertex,draw=RoyalBlue!80,fill=RoyalBlue!8,inner sep=1.5pt]
\tikzstyle{Leaf}=[rectangle,draw=Black!70,fill=Black!16,
\tikzstyle{Edge}=[Maroon!80,cap=round,line width=1pt]
\tikzstyle{Mark1}=[draw=BrickRed!80,fill=BrickRed!8]
\tikzstyle{Mark2}=[draw=BurntOrange!80,fill=BurntOrange!8]
\tikzstyle{EdgeRew}=[->,RedOrange!80,cap=round,thick]
\newcommand{\Aca}{\mathcal{A}}
\newcommand{\Fca}{\mathcal{F}}
\newcommand{\Gca}{\mathcal{G}}
\newcommand \ens[1]{\left\{ #1\right\}}
\newcommand \R{\mathbb R}
\newcommand \N{\mathbb N}
\newcommand \PP{\mathbb P}
\newcommand{\el}{\mathbb L}
\newcommand{\E}[1]{\mathbb E\left[#1\right]}
\newcommand{\proj}[2]{\mathbb{E}_{#2}\left[#1\right]}
\newcommand \Z{\mathbb Z}
\newcommand \abs[1]{\left|#1\right|}
\newcommand{\f}{\mathcal F}
\newcommand{\pr}[1]{\left(#1\right)}
\newcommand{\norm}[1]{\left\lVert #1 \right\rVert}
\newcommand{\gr}[1]{\bm{#1}}
\newcommand{\gri}{\bm{i}}
\newcommand{\grj}{\bm{j}}
\newcommand{\grn}{\bm{n}}
\newcommand{\grm}{\bm{m}}
\newcommand{\grk}{\bm{k}}
\newcommand{\imd}{\preccurlyeq}
\newcommand{\smd}{\succcurlyeq }
\begin{document}
\begin{abstract}
We give sufficient conditions for the bounded law of the iterated logarithms for
strictly stationary random fields when the summation is done on rectangle. The study is
done by the control of an appropriated maximal function. The case of  orthomartingales
is treated. Then results on projective conditions are derived.
\end{abstract} 
\maketitle

\section{Introduction, goal of the paper}

 \subsection{Bounded law of the iterated logarithms for random fields} 
Before we present the scope of the paper, let us introduce the 
following notations.

\begin{enumerate}
 \item In all the paper, $d$ is an integer greater or equal to one. 
 \item For any integer $N$, we denote by $[N]$ the set 
 $\ens{k\in\Z, 1\leq k\leq N}$.
 \item The element of $\Z^d$ whose coordinates are 
 all $0$ (respectively 
$1$) is denoted by $\gr{0}$ (resp. $\gr{1}$).

\item We denote by $\imd$ the coordinatewise order on 
the elements of $\Z^d$, that is, we write for $\gri=\pr{i_q}_{q=1}^d$ 
and $\grj=\pr{j_q}_{q=1}^d$ that 
$\gri\imd\grj$ if $i_q\leq j_q$ for all $q\in [d]$. Similarly, 
we write $\gri\smd\grj$ if $i_q\geq j_q$ for all $q\in [d]$.

\item For a family of numbers $\pr{a_{\grn}}_{\grn\smd \gr{1}}$, we define 
$\limsup_{\grn\to +\infty}a_{\grn}:=\lim_{m\to +\infty}\sup_{\grn\smd m\gr{1}}a_{\grn}$.

\item Let $L\colon \pr{0,+\infty}\to \R$ be defined by 
$L\pr{x}=\max\ens{\ln x,1}$ and $LL\colon \pr{0,+\infty}\to \R$ 
by $LL\pr{x}=L\circ L\pr{x}$.
\end{enumerate}

Let $\pr{X_{\gri}}_{\gri\in\Z^d}$ be a strictly stationary random 
field and denote for $\grn\smd \gr{1}$ the partial sum 
\begin{equation}
 S_{\grn}:=\sum_{\gr{1}\imd\gri \imd\grn}X_{\gri}.
\end{equation}

We are interested in finding a family of positive numbers 
$\pr{a_{\grn}}_{\grn\smd\gr{1}}$ with the smallest possible
 growth as $\max \grn:=\max_{1\leq q\leq d}n_q\to\infty$ such that the quantitiy 
\begin{equation}\label{eq:dnf_supremum_somme_normalisee}
\norm{\sup_{\grn\smd\gr{1}}\frac 1{a_{\grn}}
\abs{S_{\grn}}}_p<+\infty, 1\leq p<2,
\end{equation}
is finite.
It has been shown in \cite{MR0394894}
that for an i.i.d. collection of centered random variables 
$\ens{X_{\gri},\gri\in\Z^d}$, (with $d>1$) satisfying 
$\E{X_{\gr{0}}^2\pr{L\pr{\abs{X_{\gr{0}}}     }     }^{d-1}/LL
\pr{\abs{X_{\gr{0}}}   }}<+\infty$, then 
\begin{equation}
\limsup_{\grn \to +\infty}\frac{1}{\sqrt{\abs{\grn}L L\pr{\abs{\grn}    }  }}
S_{\grn}=\norm{X_{\gr{0}}}_2\sqrt d=
-\liminf_{\grn \to +\infty}\frac{1}{\sqrt{\abs{\grn}L L\pr{\abs{\grn}    }  }}
S_{\grn}.
\end{equation}
In particular, the moment condition as well as the $\limsup$/$\liminf$ depend on the 
dimension $d$ and the choice of $a_{\grn}=\sqrt{\abs{\grn}  LL\pr{\abs{\grn}}  }$ 
is the best possible among those guaranting the finiteness of the random variable 
involved in \eqref{eq:dnf_supremum_somme_normalisee}. 

In this paper, we will be concentrated in the following questions. First, 
we would like to give bound on the quantity
involved in \eqref{eq:dnf_supremum_somme_normalisee}. Results 
in the one dimensional case are known in the i.i.d. setting \cite{MR0501237} and martingales \cite{MR3322323}, but to the best of
 our knowledge, it seems that no results are available in dimension 
 greater than one. Nevertheless, the question of giving the limiting points of $\pr{S_{\grn}/\sqrt{\abs{\grn}LL\pr{\abs{\grn}}}  }_{\grn \smd \gr{1}}$ 
 has been investigated in dimension 2 in \cite{MR1674964}.
  
A first objective is to deal with the case of orthomartingales. Approximations by the latter class of random fields 
lead to results for the central limit theorem and 
its functional version (see \cite{MR3504508,MR3869881, MR3798239,
 MR3769664}). Therefore, a reasonable objective is to establish similar 
 results in the context of the bounded law of the iterated logarithms. Therefore, the 
 second objective is to deal with projectives conditions in order to extend the results for orthomartingales 
 to larger classes of random fields.

\subsection{Stationary random fields}

\begin{Definition}
We say that the random field $\pr{X_{\gri}}_{\gri\in \Z^d}$ is strictly stationary if 
for all $\grj \in \Z^d$, all $N\geq 1$ and all $\gr{i_1},\dots, \gr{i_N}$, the vectors 
$\pr{X_{\gr{i_1}+\grj  },\dots,X_{\gr{i_N}+\grj }}$ and 
$\pr{X_{\gr{i_1}   },\dots,X_{\gr{i_N}  }}$ have the same distribution.
\end{Definition}

It will be convenient to represent strictly stationary random 
field via dynamical systems.  
 Let $\pr{X_{\gri}}_{\gri\in \Z^d}$ be a strictly stationary 
 random field on a probability space $\pr{\Omega,\Fca,\PP}$. 
 Then there exists a probability space 
 $\pr{\Omega',\Fca',\PP'}$, $f\colon \Omega\to \R$
 and maps $T_q\colon \Omega'\to 
 \Omega'$ which are invertible, bi-measurable and measure 
 preserving and commuting
 such that $\pr{X_{\gri}}_{\gri\in \Z^d}$ has 
 the same distribution as  $\pr{f\circ T^{\gri}}_{\gri\in \Z^d}$, where $T^{\gri}=T_1^{i_1}\circ \dots\circ T_d^{i_d}$.

Since the behaviour of supremum of the weighted partial 
sums depends only on the law of the random field, we 
will assume without loss of generality that the involved 
stationary random field is of the form
$\pr{f\circ T^{\gri}}_{\gri\in\Z^d}$ and use the notations 
$U^{\gri}\pr{f}\pr{\omega}=f\pr{T^{\gri}\omega}$ and 
\begin{equation}
 S_{\grn}\pr{f}=\sum_{\gr{0}\imd\gri\imd\grn-\gr{1}}
 U^{\gri}\pr{f}.
\end{equation}

\section{Orthomartingale case}

In order to use martingale methods for the law of 
iterated logarithms, we need to introduce the 
concept of orthomartingales, which can be viewed as 
a generalization of martingales. Orthomartingales are also 
an adapted tool in order to treat the summations on rectangles. 
However, the theory of orthomartingales bumps into obstacles 
which are technical. Indeed, the extension of the notion 
of stopping time is not clear. Moreover, in most of the 
exponential inequality for martingales \cite{MR3734020,MR3311214,MR2462551}, 
the sum of square of increments and conditional variances plays a key role. A multidimensional 
equivalent does not seem obvious.

\subsection{Definition of orthomartingales}

We start by defining the meaning of filtrations in the multi-dimensional setting.

\begin{Definition}
 We call the collection of sub-$\sigma$-algebras $
 \pr{\Fca_{\gr{i}}}_{\gr{i}\in \Z^d}$ of $\Fca$ a 
 \emph{filtration} if for all $\gri,\grj\in\Z^d$ such 
 that $\gri\imd \grj$, the inclusion $\Fca_{\gri}\subset 
 \Fca_{\grj}$ holds.
\end{Definition}

In order to have filtrations compatible with the map $T$, 
we will consider filtrations of the form $\Fca_{\gri}:=
T^{-\gri}\Fca_{\gr{0}}$. These are indeed filtrations 
provided that $T_q\Fca_{\gr{0}}\subset \Fca_{\gr{0}}$
holds for all $q\in [d]$. 

We will also impose commutativity of the involved filtrations, 
that is, for each integrable random variable $Y$, the following 
equalities should hold for all $\gri$ and $\grj\in\Z^d$:
\begin{equation}
 \E{\E{Y\mid \Fca_{\gri}  }\mid \Fca_{\grj}}= 
 \E{\E{Y\mid \Fca_{\grj}  }\mid \Fca_{\gri}}
 =  \E{Y\mid \Fca_{\min\ens{\gri,\grj}  }  },
\end{equation}
where $\min\ens{\gri,\grj}$ is the coordinatewise minimum, that is, 
$\min\ens{\gri,\grj}=\pr{\min\ens{i_q,j_q}}_{q=1}^d$.

\begin{Definition}
 Let $\pr{T^{-\gri}\Fca_{\gr0}}_{\gri\in\Z^d}$ be a commuting 
 filtration. We say that $\pr{m\circ T^{\gri}}_{\gri\in\Z^d}$ 
 is an \emph{orthomartingale difference random field} if 
 the function $m$ is integrable and $\Fca_{\gr{0}}$-measurable 
 such that $\E{m\mid T_q\Fca_{\gr{0}}}
 =0$ for all $q\in [d]$.
\end{Definition}

Strictly stationary orthomartingale difference random fields 
are a convenient class of random fields to deal with, especially 
from the point of view of limit theorems. If 
$\pr{m\circ T^{\gri}}_{\gri\in\Z^d}$ is an orthomartingale difference 
random field and one of the maps $T^{\gr{e_q}}$ is ergodic, 
then $\pr{S_{\grn}\pr{m}/\abs{\grn}}_{\grn\smd \gr{1}}$ converges 
to a normal distribution as $\max\grn$ goes to infinity 
(see \cite{MR3427925}). Under these conditions, a functional 
central limit theorem has also been established in Theorem~1 of 
\cite{MR3504508}. 

It turns out that a central limit theorem 
still holds without the assumption of ergodicity of one of 
the marginal transformations $T^{\gr{e_q}}$ (see 
Theorem~1 in \cite{VOLNY2018}). However, it seems that 
there is no result regarding the law of the iterated logarithms 
for orthomartingale difference random fields.

\subsection{Definition of the maximal function}

Consider one of the most simple example of orthomartingale 
difference random fields in dimension two defined in the 
following way: let $\Omega:=\Omega_1\times\Omega_2$, where 
$\pr{\Omega_1,\Aca_1,\mu_1,T_1}$ and 
$\pr{\Omega_2,\Aca_2,\mu_2,T_2}$
are dynamical systems, where $\Aca_1$ and $\Aca_2$ are 
generated respectively by 
$e_1\circ T_1^{i_1}$, $i_1\in\Z$ and $e_2\circ T_2^{i_2}$, 
$i_2\in\Z$ and $e_1$, $e_2$ are bounded centered 
functions such that the sequences
$\pr{e_1\circ T_1^{i_1}}_{i_1\in\Z}$ and 
$\pr{e_2\circ T_2^{i_2}}_{i_2\in\Z}$ are both i.i.d.
Define $X_{\pr{i_1,i_2}}:=e_1\circ T_1^{i_1}\cdot e_2\circ T_2^{i_2}$ 
and let $\Fca_{\pr{i_1,i_2}}:=\sigma\ens{X_{\pr{j_1,j_2}},j_1\leq i_1
\mbox{ and }j_2\leq i_2}$. Then $\Fca_{\pr{i_1,i_2}}=
T_1^{-i_1}T_2^{-i_2}\Fca_{0,0}$ and 
$\pr{\Fca_{\pr{i_1,i_2}}}_{i_1,i_2\in\Z}$ is a commuting filtration. 
Moreover, $\pr{X_{\pr{0,0}}\circ T^{\pr{i_1,i_2}}}_{i_1,i_2\in\Z}$ 
is an orthomartingale difference random field and $X_{\pr{0,0}}$ is 
bounded. Observe that for all $n_1,n_2\geq 1$, the 
following inequality holds 
\begin{equation}
 \frac{1}{\sqrt{n_1n_2L L\pr{n_1n_2   }      }}
 \abs{S_{\pr{n_1,n_2}}}
 = \frac{1}{\sqrt{n_1L L\pr{n_1n_2    }    }}
 \abs{\sum_{i_1=0}^{n_1-1}e_1\circ T_1^{e_1}} 
 \frac{1}{\sqrt{n_2}}\abs{\sum_{i_2=0}^{n_2-1}
 e_2\circ T_2^{e_2}}, 
\end{equation}
which can be rewritten as 
\begin{multline}
  \frac{1}{\sqrt{n_1n_2 LL\pr{n_1n_2   }      }}
 \abs{S_{\pr{n_1,n_2}}}\\
 = \frac{1}{\sqrt{n_1L L\pr{n_1   }      }}
 \abs{\sum_{i_1=0}^{n_1-1}e_1\circ T_1^{e_1}} 
 \frac{\sqrt{L L\pr{n_1    }    }}{\sqrt{
 L L\pr{n_1n_2      }    }}
 \frac 1{\sqrt{n_2}}\abs{\sum_{i_2=0}^{n_2-1}
 e_2\circ T_2^{e_2}}.
\end{multline}
Consequently, for any fixed $n_2\geq 1$, it holds, from the 
classical law of the iterated logarithms and the fact that 
\begin{equation}
 \frac{\sqrt{L L\pr{n_1      }    }}{\sqrt{ L L\pr{n_1n_2   }  }    }\to 1,
\end{equation}
that 
\begin{equation}
 \sup_{n_1\geq 1}\frac{1}{\sqrt{n_1n_2L L\pr{n_1n_2     }    }}
 \abs{S_{\pr{n_1,n_2}}}
 \geq  \frac 1{\sqrt 2}\norm{e_1}_2\frac 1{\sqrt{n_2}}\abs{\sum_{i_2=0}^{n_2-1}
 e_2\circ T_2^{e_2}}.
\end{equation}
Hence the same maximal function as in the Bernoulli case (see \cite{giraudo2019bounded}) would be 
almost surely infinite. This lead to an alternative definition, 
namely,  
\begin{equation}
M\pr{f}:=\sup_{\gr{n}\in\N^d}\frac{\abs{S_{\gr{n}}\pr{f}}}{
\abs{\gr{n}}^{1/2}\pr{\prod_{i=1}^d
L L\pr{n_i   }}^{1/2}}.
\end{equation}

This definition is coherent with the previous example of 
orthomartingale and its generalization to the dimension $d$. 
In this case, $M\pr{X_{\pr{0,0}}}$ is simply the product of the 
$1$-dimensional maximal function associated to bounded 
i.i.d. sequences. Hence $M\pr{X_{\pr{0,0}}}$ is almost surely finite. 

\subsection{Result}

Before we present the result, we need to introduce spaces 
of random variables satisfying an integrability condition, which is more 
restrictive than having a finite moment of order two but less restrictive
than a finite moment of order $2+\delta$ for a positive $\delta$. 
We define  for $p>1$ and $r\geq 0$, the function $\varphi_{p,r}\colon 
[0,+\infty)$ by $\varphi_{p,r}\pr{x}:=x^p\pr{1+\log\pr{1+x}    }^r$ and 
 denote by 
$\el_{p,r}$ the Orlicz space associated to this function. We define the 
norm $\norm{\cdot}_{p,r}$ of an element $X$ of $\el_{p,r}$ by 
\begin{equation}
 \norm{X}_{p,r}:=\inf\ens{\lambda>0\mid 
 \E{\varphi_{p,r}\pr{\frac X\lambda}  }\leq 1}.
\end{equation}

It turns out that for a stationary orthomartingale difference 
sequence, the maximal function is almost surely finite 
provided that $m$ belongs to $\el_{2,2\pr{d-1}}$. The 
next result gives also a control for the moments of the 
maximal function.

\begin{Theorem}\label{thm:LIL_orthomartingales}
Let $d\geq 1$ be an integer. For all $1\leq p<2$, there exists a constant 
$C_{p,d}$ depending only on $p$ and $d$ such that for 
all  strictly stationary orthomartingale difference random field
 $\pr{m\circ T^{\gr{i}}}_{\gr{i}\in \Z^d}$, the following inequality holds:
\begin{equation}\label{eq:norme_p_fct_max_orthomartingale}
\norm{M\pr{m}}_p\leq C_{p,d}\norm{m}_{2,2\pr{d-1}}.
\end{equation}
Moreover, for all $r\geq 0$, 
\begin{equation}\label{eq:norme_2_fct_max_orthomartingale}
\norm{M\pr{m}}_{2,r}\leq C_{p,d,r}\norm{m}_{2,r+2d}.
\end{equation}
\end{Theorem}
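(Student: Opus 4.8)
The plan is to prove this by induction on the dimension $d$, exploiting the product structure of the maximal function $M(m)$ and the orthomartingale difference property. The base case $d=1$ should follow from the known one-dimensional results for martingales, e.g. the bounded law of the iterated logarithm control in \cite{MR3322323}, which controls the corresponding one-dimensional maximal function $\sup_{n\geq 1}\abs{S_n(m)}/\sqrt{n\,LL(n)}$ in $\el_p$ (and in the Orlicz scale) in terms of an $\el_{2,r}$-type norm of the martingale increment. The key point allowing the inductive step is that an orthomartingale difference field is, in each coordinate direction separately, a martingale difference sequence with respect to the appropriate filtration, so one can condition on all but one coordinate and apply the one-dimensional estimate in that direction.

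\medskip

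First I would reduce the maximal function over $\grn \in \N^d$ by separating one coordinate, say the last one. Writing $S_{\grn}(m) = S_{(n_1,\dots,n_{d-1})}^{(d-1)}\big(S_{n_d}^{(d)}(m)\big)$ — that is, treating the partial sum over the first $d-1$ coordinates of the one-dimensional partial sums in the $d$-th coordinate — I would bound
\begin{equation}
M(m) \leq \sup_{n_d\geq 1}\frac{1}{\sqrt{n_d\,LL(n_d)}}\;\sup_{(n_1,\dots,n_{d-1})}\frac{\abs{S_{(n_1,\dots,n_{d-1})}^{(d-1)}(Y_{n_d})}}{\prod_{q=1}^{d-1}\sqrt{n_q\,LL(n_q)}},
\end{equation}
where $Y_{n_d}$ denotes the one-dimensional sum in the last coordinate, normalized appropriately. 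The commutativity of the filtration is essential here: it guarantees that conditioning in one direction does not destroy the orthomartingale structure in the remaining directions, so that $Y_{n_d}$, read as a function of the first $d-1$ transformations, is again an orthomartingale difference field of dimension $d-1$ to which the induction hypothesis applies.

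\medskip

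The main technical engine will be the one-dimensional martingale maximal inequality together with an interpolation/duality argument to track the Orlicz norms through the product. For the $\el_p$ bound \eqref{eq:norme_p_fct_max_orthomartingale} with $1\leq p<2$, I expect to apply the one-dimensional estimate in each of the $d$ directions successively, each application costing one extra logarithmic factor in the Orlicz weight, which accounts for the exponent $2(d-1)$: each of the $d-1$ "inner" sums is controlled in an $\el_2$-with-log-weight norm rather than a plain $\el_p$ norm (the $\el_p$ slack being available only at the outermost application), and the $d-1$ nested conditional applications accumulate the weight $2(d-1)$. For the Orlicz bound \eqref{eq:norme_2_fct_max_orthomartingale}, the bookkeeping is cleaner: all $d$ applications are at the $\el_2$ level, and each direction contributes weight $2$ to the log-exponent, giving the loss from $r$ to $r+2d$.

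\medskip

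The hard part will be the bookkeeping of the Orlicz norms under the nested conditional expectations, specifically establishing a clean Fubini-type inequality of the form $\norm{\,\norm{g}_{\el_{2,s}(\omega_2)}\,}_{\el_{2,t}(\omega_1)} \leq \norm{g}_{\el_{2,s+t}}$ (up to constants) that lets the per-direction logarithmic losses add up correctly; controlling the multiplicative constant so that it depends only on $p$, $d$, $r$ and not on the underlying system; and verifying that the one-dimensional maximal inequality can indeed be applied conditionally, uniformly over the frozen coordinates, using stationarity to keep the conditional norm a genuine function of the remaining variables. I would isolate these Orlicz-norm manipulations into a preliminary lemma so that the inductive step reads as a direct iteration of the one-dimensional result.
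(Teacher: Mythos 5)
Your high-level plan (induction on the dimension, base case from the one-dimensional martingale LIL of Cuny, a loss of one $\log^2$ factor per direction) matches the shape of the paper's argument, but the mechanism you propose for the inductive step does not work as described, and it is precisely there that the real difficulty lies. The inequality you write for $M\pr{m}$ is in fact an identity: the joint supremum over $\grn$ equals the iterated supremum, and the inner quantity $\sup_{n_1,\dots,n_{d-1}}\abs{S^{(d-1)}(Y_{n_d})}/\prod_q\sqrt{n_q LL(n_q)}$ is a $(d-1)$-dimensional maximal function of a function $Y_{n_d}$ that \emph{varies with the outer index} $n_d$. The induction hypothesis controls $\norm{M^{(d-1)}(Y_{n_d})}$ for each fixed $n_d$, but you then need to control $\sup_{n_d} M^{(d-1)}(Y_{n_d})$ inside the norm, and a Fubini-type inequality for iterated Orlicz norms of the form $\norm{\,\norm{g}_{2,s}\,}_{2,t}\leq\norm{g}_{2,s+t}$ says nothing about interchanging an almost-sure supremum with a norm. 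A naive union bound over $n_d$ (or over all of $\N^d$) diverges, since the one-dimensional estimate gives no decay in $n_d$.

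The paper resolves this with three ingredients that are absent from your sketch. First, a reduction to dyadic indices (Proposition~\ref{prop:reduction_dyadiques_ortho_martingales}), proved via a Doob-type argument that uses the orthomartingale property coordinate by coordinate, so that only $\grn$ with power-of-two coordinates need be considered. Second, for the dyadic supremum, the tail $\PP\ens{Y_{\grn}>x}$ is estimated on the event $B_{\grn,i}$ where the averaged sum of squares plus conditional variances in direction $i$ is at most $x^2/\ln x$, using the Bercu--Touati exponential inequality (Proposition~\ref{prop:inegalite_deviation_martingales}); this yields a bound of order $\pr{\max_i n_i}^{-\ln x/2}$, which is what makes the union bound over dyadic $\grn$ summable. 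Third, the complementary event $\bigcup_{\grn}B_{\grn,i}^c$ is controlled by the maximal ergodic theorem, and it is this step that produces the $(d-1)$-dimensional maximal functions $Z_i$ of the normalized one-direction sums to which the induction hypothesis is applied (Lemmas~\ref{lem:lien_dim_d_dim_d-1}, \ref{lem:recurrence_moments_p} and \ref{lem:recurrence_moments_2}). Without the dyadic reduction, the exponential deviation bound, and the ergodic-maximal control of the quadratic-variation event, the per-direction iteration you describe cannot be closed.
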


\begin{Remark}
Theorem~2.3 of \cite{MR3322323} gives a similar result as Theorem~\ref{thm:LIL_orthomartingales} for stationary martingale difference sequences, which corresponds to $d=1$ in Theorem~\ref{thm:LIL_orthomartingales}.
The case of Banach-valued martingales was addressed in the former paper but 
not the case of orthomartingales.

Moreover, \eqref{eq:norme_2_fct_max_orthomartingale} gives 
 $\norm{M\pr{m}}_{2,r}\leq C_{p,d,r}\norm{m}_{2,r+2}$ and in particular a control on the $\mathbb L_{2,r}$-norm 
 of $M\pr{m}$. 
\end{Remark}

\begin{Remark}
The condition $m\in \el_{2,2\pr{d-1}}$ is sufficient for the bounded law 
of the iterated logarithms. However, we are not able to determine whether 
the parameter $2\pr{d-1}$ is optimal.
\end{Remark}

\section{Projective conditions}

Given a probability space $\pr{\Omega,\Fca,\PP}$ endowed 
with a measure preserving action $T$ and a commuting 
filtration $\pr{T^{-\gri}\Fca_{\gr{0}}}_{\gri\in\Z^d}$, a
projective condition is a requirement on a function 
$f\colon\Omega\to \R$ involving the functions 
$\E{f\circ T^{\gri}\mid\Fca_{\gr{0}}}$, $\gri\in\Z^d$. In the orthomartingale case, 
the function $\E{f\circ T^{\gri}\mid\Fca_{\gr{0}}}$ is identically equal to zero if 
$\gri\smd 0$ and $\gri\neq 0$. Therefore, projective conditions can be intuitively seen 
as a measure of the distance with respect to the martingale case.

\subsection{Hannan-type condition}

If $\pr{\Fca_{\gr{i}}}_{\gr{i}\in \Z^d}$ is a commuting filtration and $J\subset [d]$, we denote 
by $\Fca_{\infty\gr{1}_I+\gr{i}}$ the $\sigma$-algebra generated by the union of $\Fca_{\gr{j}}$ 
where $\gr{j}$ runs over all the elements of $\Z^d$ such that $j_q \leq i_q$ for 
all $q\in [d]\setminus I$. For example, if $d=2$, then $\Fca_{\infty\gr{1}_{\ens{1}}+\pr{i_1,i_2}}$ 
is the $\sigma$-algebra generated by $\bigcup_{j_1\in \Z}\Fca_{\pr{j_1,i_2}}$ and $\Fca_{\infty\gr{1}_{\ens{2}}+\pr{i_1,i_2}}$ 
is the $\sigma$-algebra generated by $\bigcup_{j_2\in \Z}\Fca_{\pr{i_1,j_2}}$.
Let $\pr{U^{\gri}f}_{\gri\in\Z^d}$ be a strictly stationary random field. 

Assume first that $d=1$, $T\colon \Omega\to \Omega$ is a bijective 
 bimeasurable measure preserving map and $\f_0$ is a sub-$\sigma$-algebra 
 such that $T\f_0\subset\f_0$. Assume that $f\colon \Omega\to \R$ 
 is measurable with respect to the $\sigma$-algebra generated by 
 $\bigcup_{k\in \Z}T^k\f_0$ and such that $\E{f\mid \bigcap_{k\in \Z}
 T^k\f_0}=0$. Let us consider the condition
 \begin{equation}\label{eq:Hannan_dim1}
  \sum_{i\in \Z}\norm{\E{f\circ T^i\mid \f_0}
  -\E{f\circ T^i\mid T\f_0}}_2<+\infty.
 \end{equation}

 The generalization of condition \eqref{eq:Hannan_dim1} 
 to random fields has been considered by Voln\'y and Wang.
  Let us recall the notations and results of \cite{MR3264437}.
 The projection operators with respect to a commuting filtration 
$\left(\mathcal F_{\gri}\right)_{\gri\in \Z^d}$ are 
defined by 
  \begin{equation}\label{eq:definition_projectors}
 \pi_{\grj}:=\prod_{q=1}^d\pi_{j_q}^{(q)},\quad \grj\in \Z^d,
\end{equation}
where for $\ell\in \Z$, $\pi_\ell^{(q)}\colon \mathbb L^1(\mathcal F)\to \mathbb L^1(\mathcal F)$ 
is defined for $f\in\mathbb L^1$ by 
\begin{equation}
 \pi_\ell^{(q)}(f)=\mathbb E_\ell^{(q)}\left[f\right]
 -\mathbb E_{\ell-1}^{(q)}\left[f\right]
\end{equation}
and 
\begin{equation}
 \mathbb E_\ell^{(q)}\left[f\right]=\mathbb E\left[f\mid 
 \bigvee_{\mathclap{\substack{\gri\in \Z^d \\
 i_q\leqslant \ell}}}\mathcal F_{\gri}\right], q\in [d],\ell\in \Z.
\end{equation} The natural extension of \eqref{eq:Hannan_dim1} 
to the dimension $d$ case is 
\begin{equation}\label{eq:Hannan_dim_d_pour_le_WIP}
 \sum_{\grj\in\Z^d}\norm{ \pi_{\grj}\pr{f}}_{2}<+\infty.
\end{equation} Under \eqref{eq:Hannan_dim_d_pour_le_WIP}, the functional 
central limit holds (Theorem~5.1 in \cite{MR3264437} and 
Theorem~8 in \cite{MR3504508} and its quenched version 
\cite{2018arXiv180908686Z}). 
Theorefore, it is reasonnable 
to look for a condition in this spirit for the bounded law 
of the iterated logarithms. The obtained result is as 
follows.

  \begin{Theorem} \label{thm:Hannan_con}
   Let $\pr{\f_{\gr{i}}}_{\gr{i}\in\Z^d}:=
   \pr{T^{-\gri}\f_{\gr{0}}}_{\gri\in \Z^d}$ 
   be a commuting filtration. 
   Let $f$ be a function such that for each $q\in [d]$, $\E{ 
   f\mid T_q^\ell\f_{\gr{0}} }\to 0$ as $\ell\to +\infty$ and measurable with respect to the 
   $\sigma$-algebra generated by $\bigcup_{\gri\in\Z^d}T^{\gri}
   \f_{\gr{0}}$. Then for all $1<p<2$, 
   \begin{equation}
   \norm{M\pr{f}}_p\leq C_{p,d} 
   \sum_{\grj\in\Z^d}\norm{ \pi_{\grj}\pr{f}}_{2,2\pr{d-1}}. 
   \end{equation}
  \end{Theorem}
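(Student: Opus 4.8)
The plan is to derive Theorem~\ref{thm:Hannan_con} from the orthomartingale result Theorem~\ref{thm:LIL_orthomartingales} by decomposing $f$ into a telescoping sum of orthomartingale differences indexed by $\Z^d$, each built from the projection operators $\pi_{\grj}$. Concretely, under the stated assumptions one has the orthogonal (in the commuting-filtration sense) decomposition $f=\sum_{\grj\in\Z^d}\pi_{\grj}(f)$, where for each fixed $\grj$ the field $\pr{U^{\gri}\pi_{\grj}(f)}_{\gri\in\Z^d}$ is, up to the coordinate shift by $\grj$, a strictly stationary orthomartingale difference random field. Indeed, since $\pi_{j_q}^{(q)}=\mathbb E_{j_q}^{(q)}-\mathbb E_{j_q-1}^{(q)}$, the function $\pi_{\grj}(f)$ is measurable with respect to $\bigvee_{i_q\leq j_q}\Fca_{\gri}$ for each $q$ and has vanishing conditional expectation with respect to the next-smaller $\sigma$-algebra in each coordinate direction; this is exactly the defining property of an orthomartingale difference (after translating $\grj$ to the origin). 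The hypothesis $\E{f\mid T_q^{\ell}\f_{\gr0}}\to 0$ guarantees that the ``boundary'' terms vanish so that the decomposition is complete and no mass is lost at $\grj\to -\infty$ in any direction.

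The next step is to pass from this decomposition of $f$ to a decomposition of the partial sums, and thence of the maximal function. By linearity $S_{\grn}(f)=\sum_{\grj\in\Z^d}S_{\grn}\pr{\pi_{\grj}(f)}$, so dividing by the normalization $\abs{\grn}^{1/2}\pr{\prod_{i=1}^d LL(n_i)}^{1/2}$, taking the supremum over $\grn$, and using subadditivity of the supremum of sums gives the pointwise bound
\begin{equation}
M(f)\leq \sum_{\grj\in\Z^d}M\pr{\pi_{\grj}(f)}.
\end{equation}
Taking $\mathbb L^p$ norms and applying the triangle inequality in $\el_p$ then yields
\begin{equation}
\norm{M(f)}_p\leq \sum_{\grj\in\Z^d}\norm{M\pr{\pi_{\grj}(f)}}_p.
\end{equation}

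It remains to control each summand. Since $\pr{U^{\gri}\pi_{\grj}(f)}_{\gri}$ is a strictly stationary orthomartingale difference random field, Theorem~\ref{thm:LIL_orthomartingales} applies with $m=\pi_{\grj}(f)$ (the maximal function being shift-invariant in law, so translating $\grj$ to the origin does not affect the norm), giving for each $1<p<2$ the bound $\norm{M\pr{\pi_{\grj}(f)}}_p\leq C_{p,d}\norm{\pi_{\grj}(f)}_{2,2(d-1)}$. Summing over $\grj\in\Z^d$ and combining with the previous display produces exactly
\begin{equation}
\norm{M(f)}_p\leq C_{p,d}\sum_{\grj\in\Z^d}\norm{\pi_{\grj}(f)}_{2,2(d-1)},
\end{equation}
as claimed. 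The main obstacle I anticipate is the first step: rigorously verifying that each $\pi_{\grj}(f)$ really is (a translate of) an orthomartingale difference in the precise sense required by the definition, and that the commutativity of the filtration makes the product structure $\pi_{\grj}=\prod_q \pi_{j_q}^{(q)}$ behave correctly so that the conditional-expectation-zero property holds simultaneously in every coordinate direction. One must also justify convergence of the series $f=\sum_{\grj}\pi_{\grj}(f)$ in a strong enough sense (e.g.\ in $\el^2$, using the Hannan condition \eqref{eq:Hannan_dim_d_pour_le_WIP} and the assumed decay of the conditional expectations) to legitimately interchange the summation with $S_{\grn}$ and with the supremum defining $M$; the subadditivity bound on $M$ is then the routine part.
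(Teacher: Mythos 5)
Your proposal is correct and follows essentially the same route as the paper: decompose $f=\sum_{\grj\in\Z^d}\pi_{\grj}(f)$, use subadditivity to get $\norm{M(f)}_p\leq\sum_{\grj}\norm{M(\pi_{\grj}(f))}_p$, and apply Theorem~\ref{thm:LIL_orthomartingales} to each $\pr{U^{\gri}\pi_{\grj}(f)}_{\gri\in\Z^d}$, which is an orthomartingale difference field for the shifted commuting filtration. The technical caveats you flag (verifying the orthomartingale property of $\pi_{\grj}(f)$ and the a.s.\ convergence of the decomposition under the stated hypotheses) are exactly the points the paper also relies on, treating them as consequences of the commuting-filtration structure.
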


\subsection{Maxwell and Woodroofe type condition}  \label{sec:MW}

In order to extend the results obtained for 
orthomartingales to a larger class of strictly stationary 
random fields, we need an 
extension of the following almost sure maximal inequality 
(Proposition~4.1 in \cite{MR3650410}).

\begin{Proposition}
Let $\pr{\Omega,\Fca,\mu,T}$ be a dynamical system and let 
$\Fca_0$ be a sub-$\sigma$-algebra of $\Fca$ such that 
$T\Fca_0\subset \Fca_0$. Denote $\proj{Y}{j}:=\E{Y\mid T^{-j}\Fca_0}$.
Then for all integer $n\geq 0$ and all $\Fca_0$-measurable function $f$, the 
following inequality holds almost surely:
\begin{multline}\label{eq:almost_sure_inequality_dim_1}
\max_{1\leq i\leq 2^n}\abs{\sum_{j=0}^{i-1}f\circ T^j}
\leq \max_{1\leq i\leq 2^n}\abs{\sum_{\ell =0}^{i-1}\pr{f-\proj{f}{-1}}\circ T^\ell}
+\sum_{k=0}^{n-1}\max_{1\leq i\leq 2^{n-k-1}}
\abs{\sum_{\ell=0}^{i-1}d_k\circ T^{2^{k+1}\ell}}\\
+\abs{u_n}+\sum_{k=0}^{n-1}\max_{1\leq \ell\leq 2^{n-k-1}-1}
\abs{u_k}\circ T^{2^{k+1}\ell},
\end{multline}
where 
\begin{equation}
u_k=\proj{\sum_{j=0}^{2^k-1}f\circ T^j   }{-2^k},
\end{equation}
\begin{equation}
d_k=u_k+u_k\circ T^{2^k}-u_{k+1}.
\end{equation}
\end{Proposition}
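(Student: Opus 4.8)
The plan is to prove the inequality as a purely pathwise estimate: once the functions $g:=f-\proj{f}{-1}$, $u_k$ and $d_k$ are fixed, the measure-preserving structure is irrelevant to the bound itself, and only the triangle inequality together with a dyadic telescoping is needed. (The dynamics re-enter afterwards, to guarantee that the resulting summands are useful.) First I would record that $u_0=\proj{f}{-1}$, since $2^0=1$, so that $f=g+u_0$. The triangle inequality then gives
\[
\max_{1\leq i\leq 2^n}\abs{\sum_{j=0}^{i-1}f\circ T^j}\leq \max_{1\leq i\leq 2^n}\abs{\sum_{\ell=0}^{i-1}g\circ T^\ell}+\max_{1\leq i\leq 2^n}\abs{\sum_{\ell=0}^{i-1}u_0\circ T^\ell},
\]
whose first term is exactly the first term of the claimed bound. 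Everything reduces to controlling the partial sums of $u_0$.

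For the second term I would introduce, for $k\geq 0$, the quantities $R^{(k)}_i:=\sum_{\ell=0}^{i-1}u_k\circ T^{2^k\ell}$, so that the object to bound is $\max_{1\leq i\leq 2^n}\abs{R^{(0)}_i}$. Grouping the summands of $R^{(k)}_{2I}$ into consecutive pairs and invoking the defining relation $u_k+u_k\circ T^{2^k}=d_k+u_{k+1}$ yields the pairing identity
\[
R^{(k)}_{2I}=\sum_{\ell=0}^{I-1}d_k\circ T^{2^{k+1}\ell}+R^{(k+1)}_I,
\]
together with its odd-index companion $R^{(k)}_{2I+1}=R^{(k)}_{2I}+u_k\circ T^{2^{k+1}I}$. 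I stress that this step is formal, using only the definition of $d_k$.

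From this I would prove, by downward induction on $k$ (equivalently, by induction on the number $n-k$ of remaining scales), the estimate
\[
\max_{1\leq i\leq 2^{n-k}}\abs{R^{(k)}_i}\leq \sum_{j=k}^{n-1}\max_{1\leq i\leq 2^{n-j-1}}\abs{\sum_{\ell=0}^{i-1}d_j\circ T^{2^{j+1}\ell}}+\abs{u_n}+\sum_{j=k}^{n-1}\max_{\ell}\abs{u_j}\circ T^{2^{j+1}\ell},
\]
the base case $k=n$ being the trivial identity $R^{(n)}_1=u_n$. In the inductive step I substitute the pairing identity into $\abs{R^{(k)}_i}$, write $I=\lfloor i/2\rfloor$, bound the odd-index contribution by $\abs{u_k}\circ T^{2^{k+1}I}$, and estimate the maximum of a sum by the sum of the maxima; the remaining middle term $\max\abs{R^{(k+1)}_{\lfloor i/2\rfloor}}$ is then absorbed by the induction hypothesis one scale lower. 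Taking $k=0$ and combining with the first display gives the proposition.

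The hard part will be the bookkeeping at the odd indices: I have to check that, as $i$ ranges over $\{1,\dots,2^{n-k}\}$, the block count $\lfloor i/2\rfloor$ and hence the shift exponent $2^{k+1}\lfloor i/2\rfloor$ stay within the advertised range, so that the boundary maxima of $\abs{u_j}\circ T^{2^{j+1}\ell}$ line up across scales and the telescoping collapses to the single term $\abs{u_n}$; this endpoint analysis (the precise lower and upper limits on $\ell$) is the only delicate point, the rest being the triangle inequality along the dyadic tree. Finally, although it is not needed for the present pathwise bound, I would separately note---because it is what makes the right-hand side exploitable in the sequel---that with $V_k:=\sum_{j=0}^{2^k-1}f\circ T^j$ one has $V_{k+1}=V_k+V_k\circ T^{2^k}$, whence the tower property together with the measure-preserving character of $T$ gives $\E{d_k\mid T^{2^{k+1}}\Fca_0}=0$; thus each $\pr{d_k\circ T^{2^{k+1}\ell}}_\ell$ is a martingale difference sequence, as is $\pr{g\circ T^\ell}_\ell$.
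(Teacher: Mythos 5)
First, a remark on provenance: the paper does not actually prove this proposition itself --- it quotes it from Proposition~4.1 of \cite{MR3650410} and only uses it as the base case of the induction establishing Proposition~\ref{prop:inegalite_presque_sure_dim_d}. So the comparison here is with the standard dyadic argument behind that result, which is exactly the route you take: split $f=\pr{f-\proj{f}{-1}}+u_0$, set $R^{(k)}_i=\sum_{\ell=0}^{i-1}u_k\circ T^{2^k\ell}$, use the pairing identities $R^{(k)}_{2I}=\sum_{\ell=0}^{I-1}d_k\circ T^{2^{k+1}\ell}+R^{(k+1)}_I$ and $R^{(k)}_{2I+1}=R^{(k)}_{2I}+u_k\circ T^{2^{k+1}I}$, and induct downward on the scale with base case $R^{(n)}_1=u_n$. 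All of this is correct, as is your closing observation that each $\pr{d_k\circ T^{2^{k+1}\ell}}_{\ell}$ is a martingale difference sequence.

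The gap is precisely the endpoint bookkeeping you defer, and it is not merely cosmetic. In the odd-index identity the block count is $I=\lfloor i/2\rfloor$, and as $i$ ranges over $\ens{1,\dots,2^{n-k}}$ the odd values $i=2I+1$ give $I\in\ens{0,1,\dots,2^{n-k-1}-1}$: the value $I=0$ does occur (at $i=1$, where $R^{(k)}_{1}=u_k\circ T^{0}$) and produces the term $\abs{u_k}$, i.e.\ the shift $\ell=0$, which lies \emph{outside} the advertised range $1\leq\ell\leq 2^{n-k-1}-1$. No cleverer bookkeeping can repair this, because the inequality as literally printed is false. Take $n=1$, $\pr{\eps_j}_{j\in\Z}$ i.i.d.\ Rademacher, $T$ the shift with $\eps_j\circ T=\eps_{j+1}$, $\Fca_0=\sigma\pr{\eps_j,\ j\leq 0}$ and $f=\eps_{-1}$. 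Then $f-\proj{f}{-1}=0$, $u_0=\eps_{-1}$, $u_1=0$, $d_0=\eps_{-1}+\eps_0$, and the last maximum runs over the empty range $1\leq\ell\leq 0$; on the event $\ens{\eps_{-1}=1,\ \eps_0=-1}$ the right-hand side equals $0$ while the left-hand side equals $1$. The statement becomes true, and your induction closes exactly as you describe, once the last maximum is taken over $0\leq\ell\leq 2^{n-k-1}-1$ (equivalently, once $\sum_{k=0}^{n-1}\abs{u_k}$ is added to the right-hand side); this is consistent with the $d$-dimensional inequality \eqref{eq:almost_sure_inequality_dim_2}, where the coboundary maxima start at $i_q=0$. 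So your strategy is the right one, but the range check you postponed is where the proof of the literal statement fails, and carrying it out shows you are proving the corrected statement rather than the one printed here.
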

We can observe that for all fixed $k$, the collection of random variables 
$\pr{d_k\circ T^{2^{k+1}\ell}}_{\ell\geq 0}$ is 
a martingale difference sequence, while for each fixed $k$, the 
contribution of $u_k$ is analoguous as that of a coboundary. 

The goal of the next proposition is to extend the previous almost sure 
inequality to the dimension $d$. It turns out that an analogous 
inequality can be established, where the decomposition while involve 
orthomartingale difference random fields in some coordinates and 
coboundary in the other one. In order to formulate this, we need the 
following notation. If $T$ is a measure preserving $\Z^d$- action on 
$\pr{\Omega,\Fca,\mu}$, $\gri\in\N^d$, $I\subset [d]$ and $h\colon\Omega\to \R$
, we define 
\begin{equation}\label{eq:sommes_partielles_dans_une_direction}
S_{\gri}^I\pr{T,h}:=\sum_{ 
\substack{0\leq j_q\leq i_q-1\\ q\in I}
   }
   h\circ T^{\sum_{q'\in I}j_{q'}\gr{e_{q'}}+
   \sum_{q''\in [d]\setminus I}i_{q''}\gr{e_{q''}}     }.
\end{equation}
In other words, the summation is done on the coordinates of the set $I$ and the 
coordinates of $[d]\setminus I$ are equal to the corresponding ones of $\gri$. In particular, 
for $I=[d]$, this is nothing but the classical partial sums.
We will need also the following notations: for $\grk\in\Z^d$, we denote by 
$Z\pr{\grk}$ the set of the elements $q\in [d]$ such that $k_q=0$. Moreover, 
given a commuting filtration $\pr{T^{-\gri}\Fca_{\gr{0}}}_{\gri\in\Z^d}$ and 
an integrable random variable $X$, we define the operator $\proj{X}{\gri}$ 
by 
\begin{equation}
\proj{X}{\gri}:=\E{X\mid T^{-\gri}\Fca_{\gr{0}}}.
\end{equation}

We are now in position to state the following almost sure inequality for 
stationary random fields.

\begin{Proposition}\label{prop:inegalite_presque_sure_dim_d}
Let $T$ be a measure preserving $\Z^d$-action on a probability space 
$\pr{\Omega,\Fca,\mu}$. Let $\Fca_{\gr{0}}\subset\Fca$ 
be a sub-$\sigma$-algebra such that $T^{\gr{e_q}}\Fca_{\gr{0}}\subset 
\Fca_{\gr{0}}$ for all $q\in [d]$ and the filtration 
$\pr{T^{-\gri}\Fca_{\gr{0}}}_{\gri\in\Z^d}$ is commuting. For each
$\Fca_{\gr{0}}$-measurable function $f$, the following inequality 
takes place almost surely:
\begin{equation}\label{eq:almost_sure_inequality_dim_d}
\max_{\gr{1}\imd \gr{i}\imd\gr{2^n}}\abs{S_{\gri}\pr{f}}   \leq  
\sum_{\gr{0}\imd\gr{k}\imd\gr{n}}
\sum_{I\subset [d]}
\max_{\gr{1}-\gr{1}_{I}\imd\gri\imd 
\gr{2^{n-k}}}\abs{S_{\gri}^I\pr{T^{\gr{2^k}   },d_{\grk,I}   }  },
\end{equation}
where 
\begin{equation}\label{eq:definition_de_dkI}
d_{\grk,I}:= 
\sum_{I''\subset    I\setminus Z\pr{\grk}}
\sum_{I'\subset  I\cap Z\pr{\grk} }
\pr{-1}^{\abs{I'}+\abs{I''}}
\proj{S_{2^{\grk+\gr{1_{Z\pr{k}}}-\gr{1_I}}}\pr{f}  }{-2^{\grk -\gr{1_{I''}}}-\gr{1_{I'}}}.
\end{equation}
\end{Proposition}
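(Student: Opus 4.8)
The plan is to mimic the one-dimensional decomposition of Proposition~4.1 and iterate it across the $d$ coordinates, treating each direction $q\in[d]$ by a dyadic splitting analogous to the chaining that produces the scalar $u_k$, $d_k$ in the stated one-dimensional inequality. First I would fix notation by interpreting the quantities $S_\gri^I(T,h)$ as partial sums taken only in the coordinates belonging to $I$, so that a tensor-product structure becomes available: applying the one-dimensional argument in the $q$-th coordinate replaces a ``full sum'' in direction $q$ by a sum over a dyadic scale $k_q$ of martingale-type blocks plus coboundary-type remainders. The key algebraic idea is that the operators involved (conditional expectations along each axis and the dyadic block operators) commute, because the filtration $\pr{T^{-\gri}\Fca_{\gr0}}_{\gri\in\Z^d}$ is assumed commuting; this is exactly what lets one run the scalar construction independently in each direction and then take a tensor product of the resulting decompositions.

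The heart of the proof is therefore an induction on $d$ (or, equivalently, a coordinatewise telescoping). I would start from $\max_{\gr1\imd\gri\imd\gr{2^n}}\abs{S_\gri(f)}$ and, for each coordinate $q$, introduce the one-dimensional building blocks relative to the transformation $T^{\gr{e_q}}$ and the induced filtration, obtaining the scale index $k_q$ and a choice of whether that coordinate contributes a ``martingale'' term or a ``coboundary'' term. Collecting these binary choices over all coordinates is precisely what the set $I\subset[d]$ encodes: $q\in I$ will correspond to a direction in which genuine summation (a partial sum $S^I$) survives, while $q\notin I$ corresponds to a coboundary-type contribution, which after taking a supremum is absorbed into the constant-in-that-coordinate term. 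The double sum over $I''\subset I\setminus Z(\grk)$ and $I'\subset I\cap Z(\grk)$ with signs $(-1)^{\abs{I'}+\abs{I''}}$ in the definition of $d_{\grk,I}$ is the $d$-fold product of the scalar relation $d_k=u_k+u_k\circ T^{2^k}-u_{k+1}$: expanding the product of these three-term differences over the $d$ axes yields exactly an inclusion–exclusion alternating sum of projected partial sums, and the role of $Z(\grk)$ (the set of coordinates where $k_q=0$) is to account for the boundary term where the dyadic recursion has not yet started in that direction, matching the shifts $-2^{\grk-\gr{1}_{I''}}$ and $-\gr{1}_{I'}$ appearing in the conditional expectation.

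Concretely I would proceed as follows. Step one: prove the pure tensorization lemma, that applying the scalar Proposition~4.1 successively in coordinates $1,2,\dots,d$ produces a telescoping identity whose terms are indexed by $\grk\imd\grn$ and by the choice, for each coordinate, among the scalar pieces; this is bookkeeping but must be done carefully because the maxima over $i$ in each coordinate do not commute with the summation, so one uses the elementary bound $\max_i\abs{\sum_j a_{ij}}\leq\sum_\alpha\max_i\abs{\cdots}$ repeatedly, one axis at a time. Step two: identify the resulting products with the quantities $S_\gri^I\pr{T^{\gr{2^k}},d_{\grk,I}}$ by matching ranges of summation---the range $\gr1-\gr1_I\imd\gri\imd\gr{2^{n-k}}$ reflects that in coboundary directions ($q\notin I$) the index is pinned while in summation directions the dyadic block count runs up to $2^{n_q-k_q}$. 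Step three: verify that the inner function produced by the tensorized telescoping is exactly $d_{\grk,I}$ as in \eqref{eq:definition_de_dkI}, which is the inclusion–exclusion computation described above.

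The main obstacle I expect is Step three, the precise combinatorial identification of the remainder functions: tracking the signs, the shift vectors inside the projections $\proj{\cdot}{-2^{\grk-\gr1_{I''}}-\gr1_{I'}}$, and the interaction between the ``coboundary start'' index set $Z(\grk)$ and the two summation sets $I'$, $I''$ requires care, since a single misplaced shift breaks the telescoping. I would manage this by first carrying out the full computation explicitly for $d=1$ (recovering $u_k$, $d_k$ and the boundary term $u_n$), then for $d=2$ to see the inclusion–exclusion pattern emerge, and only then asserting the general pattern by induction; the commutativity of the projection operators is what guarantees that the order in which the coordinates are processed does not affect the final symmetric formula, so the induction closes.
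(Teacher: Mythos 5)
Your proposal follows essentially the same route as the paper: an induction on the dimension in which the one-dimensional dyadic decomposition is applied coordinate by coordinate, the set $I$ records which directions retain a genuine (orthomartingale-type) summation versus a coboundary-type term, and the alternating double sum defining $d_{\grk,I}$ arises by expanding the product of two-term differences of conditional expectations, with commutativity of the filtration ensuring the shifts $-2^{\grk-\gr{1}_{I''}}-\gr{1}_{I'}$ come out symmetrically. The one technical point you should make explicit when closing the induction is that the partially summed function $\sum_{\ell_d=0}^{j_d-1}f\circ T^{\ell_d\gr{e_d}}$ is no longer $\Fca_{\gr{0}}$-measurable, so the intermediate application of the lower-dimensional inequality must be carried out with respect to the enlarged $\sigma$-algebra $\bigvee_{i_d\in\Z}T^{i_d\gr{e_d}}\Fca_{\gr{0}}$, the extra $\infty$-direction being removed afterwards by commutativity, exactly as in the paper.
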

 Observe that for each $I\subset [d]$ and for all $\grk$ such that 
 $\gr{0}\imd\grk \imd \grn$, the random field 
 $\pr{d_{\grk,I}\circ T^{2^{\grk}\gri_{ I }   }}_{\gri_I\in \Z^{\abs{I}}  }$ 
 is an orthomartingale difference random field. 
 In particular,  taking the $\el^2$-norm (resp. $\el^p$)
   on both sides of the inequality allows us to 
 recover Proposition~2.1 in \cite{MR3869881} (resp. Proposition~7.1 of 
 \cite{MR3222815}) in the adapted case. 
 
In order to have a better understanding of the terms involved in the right hand side of 
\eqref{eq:almost_sure_inequality_dim_d}, we will write this inequality in dimension $2$. 
This becomes 
\begin{multline}\label{eq:almost_sure_inequality_dim_2}
\max_{\substack{   1\leq i_1\leq 2^{n_1}\\
 1\leq i_2\leq 2^{n_2}}}\abs{S_{i_1,i_2}\pr{f}}   \leq  
\sum_{k_1=0}^{n_1}\sum_{k_2=0}^{n_2}
\max_{ \substack{   1\leq i_1\leq 2^{n_1-k_1}\\  1\leq i_2\leq 2^{n_2-k_2}    }
 }\abs{S_{i_1,i_2}\pr{T^{2^{k_1},2^{k_2}  },d_{k_1,k_2,[2]}   }  }\\ 
 +\sum_{k_1=0}^{n_1}\sum_{k_2=0}^{n_2}
\max_{ \substack{   1\leq i_1\leq 2^{n_1-k_1}\\  0\leq i_2\leq 2^{n_2-k_2}    }
 }\abs{S_{i_1,1}\pr{T^{2^{k_1},0  },d_{k_1,k_2,\ens{1}}   }\circ T^{0,2^{k_2}i_2}  } \\
  +\sum_{k_1=0}^{n_1}\sum_{k_2=0}^{n_2}
\max_{ \substack{   0\leq i_1\leq 2^{n_1-k_1}\\  1\leq i_2\leq 2^{n_2-k_2}    }
 }\abs{S_{1,i_2}\pr{T^{0,2^{k_1}  },d_{k_1,k_2,\ens{2}}   }\circ T^{2^{k_1}i_1,0}  }\\
 + \sum_{k_1=0}^{n_1}\sum_{k_2=0}^{n_2}
\max_{ \substack{   0\leq i_1\leq 2^{n_1-k_1}\\  0\leq i_2\leq 2^{n_2-k_2}    }
 }\abs{  d_{k_1,k_2,\emptyset}   }\circ T^{2^{k_1}i_1,2^{k_2}i_2}    ,
\end{multline}
where for $k_1,k_2\geq 1$, 
\begin{equation}
d_{0,0,[2]}=f-\proj{f}{0,-1}-\proj{f}{-1,0}+\proj{f}{-1,-1},
\end{equation}
\begin{multline}
d_{k_1,0,[2]}= \proj{S_{2^{k_1-1},1} \pr{f}}{ -2^{k_1-1},0     } -
\proj{S_{2^{k_1-1},1} \pr{f}}{ -2^{k_1},0     } \\-
\proj{S_{2^{k_1-1},1} \pr{f}}{ -2^{k_1},-1     }+
\proj{S_{2^{k_1-1},1} \pr{f}}{ -2^{k_1-1},-1     }    ,
\end{multline} 
 \begin{multline}
d_{0,k_2,[2]}= \proj{S_{1,2^{k_2-1}} \pr{f}}{ 0,-2^{k_2-1}     } -
\proj{S_{1,2^{k_2-1}} \pr{f}}{ 0,-2^{k_2 }      } \\-
\proj{S_{1,2^{k_2-1}} \pr{f}}{ -1,-2^{k_2-1}      }+
\proj{S_{1,2^{k_2-1}} \pr{f}}{-1,-2^{k_2}     }    ,
\end{multline} 
 \begin{multline}
d_{k_1,k_2,[2]}= \proj{S_{2^{k_1-1},2^{k_2-1}} \pr{f}}{ 2^{k_1-1},-2^{k_2-1}     } -
\proj{S_{1,2^{k_2-1}} \pr{f}}{ -2^{k_1-1},-2^{k_2 }      } \\-
\proj{S_{1,2^{k_2-1}} \pr{f}}{ -2^{k_1},-2^{k_2-1}      }+
\proj{S_{1,2^{k_2-1}} \pr{f}}{-2^{k_1},-2^{k_2}     }    ,
\end{multline} 
 \begin{equation}
 d_{0,0,\ens{1}}= 
 \proj{S_{2^{k_1},2^{k_2+1}  }\pr{f}  }{-2^{k_1},-2^{k_2}}
 -\proj{S_{2^{k_1},2^{k_2+1}  }\pr{f}  }{-2^{k_1}-1,-2^{k_2}},
 \end{equation}
  \begin{equation}
 d_{k_1,0,\ens{1}}=
 \proj{S_{2^{k_1},2^{k_2}  }\pr{f}  }{-2^{k_1},-2^{k_2}}
 -\proj{S_{2^{k_1},2^{k_2}  }\pr{f}  }{-2^{k_1}-1,-2^{k_2}},
 \end{equation}
  \begin{equation}
 d_{0,k_2,\ens{1}}= \proj{S_{2^{k_1-1},2^{k_2+1}  }\pr{f}  }{-2^{k_1},-2^{k_2}}
 -\proj{S_{2^{k_1-1},2^{k_2+1}  }\pr{f}  }{-2^{k_1}-1,-2^{k_2}},
 \end{equation}
  \begin{equation}
 d_{k_1,k_2,\ens{1}}= \proj{S_{2^{k_1},2^{k_2}}\pr{f}
 }{-2^{-k_1},2^{-k_2}}-\proj{S_{2^{k_1},2^{k_2}}\pr{f}
 }{-2^{-k_1-1},2^{-k_2}},
 \end{equation}
 a similar expression for $ d_{k_1,k_2,\ens{2}}$ by 
 switching the roles of $T_1$ and $T_2$  and 
\begin{equation}
  d_{0,0,\emptyset}= \proj{S_{2^{k_1+1},2^{k_2+1}}\pr{f}  }{-2^{k_1},-2^{k_2}},
\end{equation}
\begin{equation}
  d_{k_1,0,\emptyset}= \proj{S_{2^{k_1},2^{k_2+1}}\pr{f}  }{-2^{k_1},-2^{k_2}} ,
\end{equation}
\begin{equation}
  d_{0,k_2,\emptyset}=  \proj{S_{2^{k_1+1},2^{k_2}}\pr{f}  }{-2^{k_1},-2^{k_2}},
\end{equation}
\begin{equation}
  d_{k_1,k_2,\emptyset}= \proj{S_{2^{k_1},2^{k_2}}\pr{f}  }{-2^{k_1},-2^{k_2}} .
\end{equation}

 We are now in position to state a result for the law of the iterated logarithms 
 under a condition analogue to the Maxwell and Woodroofe condition, that is, 
 involving the norm in some space of $\E{S_{\grn}\pr{f} \mid \Fca_{\gr{0}  }     }$.

\begin{Theorem}\label{thm:MW_condition}
Let $T$ be a $\Z^d$-measure preserving action on a probability 
space $\pr{\Omega,\Fca,\mu}$. Let $\Fca_{\gr{0}}$ be a 
sub-$\sigma$-algebra of $\Fca$ such that $\pr{T^{-\gri}\Fca_{\gr{0}}}_{\gri
\in\Z^d}$ is a commuting filtration. Let $1<p<2$. There exists a constant 
$c_{p,d}$ such that for all $\Fca_{\gr{0}}$-measurable function $f\colon \Omega\to \R$, 
the following inequality holds:
 \begin{equation}\label{eq:inequalite_LLI_MW}
  \norm{M\pr{f}}_p   \leq c_{p,d} \sum_{\grn\smd\gr{1}}
 \frac 1{\abs{\grn}^{3/2} }
 \norm{ \E{S_{\grn}\pr{f} \mid \Fca_{\gr{0}  }     }}_{2,2\pr{d-1}}.
 \end{equation}
\end{Theorem}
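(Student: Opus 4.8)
\emph{The plan} is to combine the pointwise decomposition of Proposition~\ref{prop:inegalite_presque_sure_dim_d} with the orthomartingale bound of Theorem~\ref{thm:LIL_orthomartingales}, after a dyadic reduction of the maximal function. First I would pass from the continuous supremum defining $M\pr f$ to a supremum over dyadic scales. For a scale $\grn\smd\gr0$ and an index $\grm$ in the block $2^{n_q-1}<m_q\leq 2^{n_q}$ one has $\abs\grm\geq 2^{-d}2^{\sum_{q=1}^d n_q}$ and, since $LL\pr{2^{n_q}}\asymp\max\ens{\ln n_q,1}$, also $\prod_q LL\pr{m_q}\gtrsim w_{\grn}:=\prod_{q=1}^d\max\ens{\ln n_q,1}$; hence
\[
M\pr f\leq C\,\sup_{\grn\smd\gr0}\frac{1}{2^{\frac12\sum_{q=1}^d n_q}\,w_{\grn}^{1/2}}\,\max_{\gr1\imd\gri\imd\gr{2^{\grn}}}\abs{S_{\gri}\pr f}.
\]
I would then bound the inner maximum by \eqref{eq:almost_sure_inequality_dim_d}, namely by $\sum_{\gr0\imd\grk\imd\grn}\sum_{I\subset[d]}\max\abs{S^I_{\gri}\pr{T^{\gr{2^{\grk}}},d_{\grk,I}}}$, exploiting that for fixed $\grk,I$ the field $\pr{d_{\grk,I}\circ T^{2^{\grk}\gri_I}}_{\gri_I}$ is an orthomartingale difference field in the $I$-directions while the coordinates outside $I$ act by translation.

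Using that the supremum of a sum is at most the sum of suprema, I would treat each $I$ separately. For fixed $I$ and $\grk$, I first apply the $\abs I$-dimensional form of Theorem~\ref{thm:LIL_orthomartingales} in the orthomartingale directions: for each fixed translate in the directions outside $I$, the maximum over $\gri_I$ is at most $\pr{M_I\pr{d_{\grk,I}}\prod_{q\in I}2^{\frac12\pr{n_q-k_q}}LL\pr{2^{n_q-k_q}}^{1/2}}$ composed with that translate, where $M_I$ is the $\abs I$-dimensional maximal function attached to the subsampled action $T^{\gr{2^{\grk}}}$. Bounding $LL\pr{2^{n_q-k_q}}\leq\max\ens{\ln n_q,1}$ cancels the logarithmic factors of the $I$-directions against $w_{\grn}$ and leaves a residual weight $\prod_{q\notin I}\max\ens{\ln n_q,1}^{-1/2}$ together with a maximum over the $2^{n_q-k_q}$ translates in each direction $q\notin I$. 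Exchanging the suprema over $\grn$ with the sum over $\grk$, and applying the elementary identity $\sup_N\frac{\max_{\ell\leq N}a_\ell}{\omega\pr N}=\sup_\ell\frac{a_\ell}{\omega\pr\ell}$ (valid for nondecreasing $\omega$) in each coordinate $q\notin I$, turns the supremum over scales of a maximum over translates into a coboundary-type maximal function. Pulling out the resulting factor $2^{-\frac12\sum_{q=1}^d k_q}$ leaves, for each $\grk$ and $I$,
\[
\Xi_{\grk,I}:=\sup_{\gri_{[d]\setminus I}}\frac{M_I\pr{d_{\grk,I}}\circ T^{\sum_{q\notin I}2^{k_q}i_q\gr{e_q}}}{\prod_{q\notin I}i_q^{1/2}\max\ens{\ln\pr{k_q+\log_2 i_q},1}^{1/2}},
\]
so that $\norm{M\pr f}_p\leq C_{p,d}\sum_{I\subset[d]}\sum_{\grk}2^{-\frac12\sum_{q=1}^d k_q}\norm{\Xi_{\grk,I}}_p$.

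It remains to estimate $\norm{\Xi_{\grk,I}}_p$. Since $i_q^{1/2}\max\ens{\ln\pr{k_q+\log_2 i_q},1}^{1/2}\asymp\pr{i_q\,LL\pr{i_q}}^{1/2}$, the object $\Xi_{\grk,I}$ is a $\pr{d-\abs I}$-fold coboundary maximal function applied to $M_I\pr{d_{\grk,I}}$. For this I would establish a coboundary maximal inequality of the form $\norm{\sup_{\gri\in\N^{m}}\abs Z\circ T^{\gri}/\prod_{q=1}^{m}\pr{i_q\,LL\pr{i_q}}^{1/2}}_p\leq C\norm Z_{2,m-1}$ for $1<p<2$, proved by a union bound combined with the counting estimate $\#\ens{\gri\in\N^{m}:\prod_q i_q\,LL\pr{i_q}<x}\lesssim x\pr{\ln x}^{m-1}\pr{LL\pr x}^{-m}$ and the layer-cake formula (for $m=0$ this is the identity). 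Applying it with $m=d-\abs I$ and $Z=M_I\pr{d_{\grk,I}}$ gives $\norm{\Xi_{\grk,I}}_p\leq C\norm{M_I\pr{d_{\grk,I}}}_{2,d-\abs I-1}$, and then \eqref{eq:norme_2_fct_max_orthomartingale} in dimension $\abs I$ yields $\norm{M_I\pr{d_{\grk,I}}}_{2,d-\abs I-1}\leq C\norm{d_{\grk,I}}_{2,\pr{d-\abs I-1}+2\abs I}=C\norm{d_{\grk,I}}_{2,d+\abs I-1}$. As $d+\abs I-1\leq 2\pr{d-1}$, this is at most $C\norm{d_{\grk,I}}_{2,2\pr{d-1}}$, the case $I=[d]$ being handled directly by \eqref{eq:norme_p_fct_max_orthomartingale}. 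Finally, \eqref{eq:definition_de_dkI} exhibits $d_{\grk,I}$ as a signed sum of at most $2^d$ conditional expectations of partial sums $S_{\gr{2^{\grk'}}}\pr f$ with $\grk'$ differing from $\grk$ by a bounded vector; by stationarity and the contraction property of conditional expectation one gets $\norm{d_{\grk,I}}_{2,2\pr{d-1}}\leq C\norm{\E{S_{\gr{2^{\grk}}}\pr f\mid\Fca_{\gr0}}}_{2,2\pr{d-1}}$.

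Collecting these estimates gives a bound by the dyadic series $C_{p,d}\sum_{\grk\smd\gr0}2^{-\frac12\sum_{q=1}^d k_q}\norm{\E{S_{\gr{2^{\grk}}}\pr f\mid\Fca_{\gr0}}}_{2,2\pr{d-1}}$. To conclude I would compare this with \eqref{eq:inequalite_LLI_MW}: grouping $\grn$ according to its dyadic block shows that $\sum_{\grn\smd\gr1}\abs\grn^{-3/2}\norm{\E{S_\grn\pr f\mid\Fca_{\gr0}}}_{2,2\pr{d-1}}$ dominates the dyadic sum, using that $\grn\mapsto\norm{\E{S_\grn\pr f\mid\Fca_{\gr0}}}_{2,2\pr{d-1}}$ is subadditive in each coordinate so that its values over a block are comparable to its value at the dyadic corner; this is the $d$-dimensional analogue of the Peligrad--Utev comparison used in \cite{MR3869881,MR3222815}. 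The main obstacle lies in the second and third steps: decoupling the orthomartingale directions from the coboundary directions inside a single weighted maximal function, and tracking the Orlicz indices so that for every $I$ the contributions $2\abs I$ (from the orthomartingale maximal function) and $d-\abs I-1$ (from the $d-\abs I$ coboundary directions) always sum to at most $2\pr{d-1}$; the coboundary maximal inequality with the sharp logarithmic index $m-1$ is the key technical ingredient.
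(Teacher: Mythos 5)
Your proposal follows essentially the same route as the paper: dyadic reduction, the almost-sure decomposition of Proposition~\ref{prop:inegalite_presque_sure_dim_d}, separate treatment of the orthomartingale directions (via Theorem~\ref{thm:LIL_orthomartingales} with the Orlicz index $r$ chosen to absorb the coboundary directions) and of the translate directions (via a union bound plus the counting estimate behind Lemma~\ref{lem:control_somme_probas_par_Lpq}), the bound $\norm{d_{\grk,I}}_{2,2\pr{d-1}}\lesssim\norm{\E{S_{\gr{2^{\grk}}}\pr f\mid\Fca_{\gr0}}}_{2,2\pr{d-1}}$, and finally the coordinatewise subadditivity (Peligrad--Utev) comparison of the dyadic series with the full series. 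Your index bookkeeping ($d+\abs I-1\leq 2\pr{d-1}$) matches the paper's $2d-i-1$ with $i=d-\abs I$, so the argument is correct and is the paper's own.
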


\subsection{Application}

The previous conditions can be checked for linear processes whose innovations are 
orthomartingale difference random fields. 

\begin{Corollary}\label{cor:application_champs_lineaires_cond_proj}
Let $\pr{m\circ T^{\gri}}_{\gri\in \Z^d}$ be a strictly stationary orthomartingale 
difference random field with $m\in \el_{2,2\pr{d-1}}$, let $\pr{a_{\gri}}_{\gri\in \Z^s}\in \ell^2\pr{\Z^d}$ 
and let $\pr{f\circ T^{\gri}}_{\gri\in \Z^d}$ be the causal linear random field defined by 
\begin{equation}
f\circ T^{\gri}=\sum_{\grj \smd\gr{0}  }a_{\grj}m\circ T^{\grj-\gri}.
\end{equation}
Then for all $1<p<2$, the following inequalities take place:
\begin{equation}\label{eq:Hannan_processus_lineaires}
\norm{M\pr{f}}_p\leq C_{p,d}\sum_{\gri\smd \gr{0}}\abs{   a_{\gri}  }\norm{m}_{2,2\pr{d-1}};
\end{equation}
\begin{equation}\label{eq:MW_processus_lineaires}
\norm{M\pr{f}}_p\leq C_{p,d}\sum_{\grn\smd \gr{1}} 
\frac 1{\abs{\grn}^{3/2}}
 \pr{ \sum_{\gr{\ell}\smd\gr{0}}\pr{\sum_{\gr{0}\imd \gri\imd \grn-\gr{1}}
a_{ \gri + \gr{\ell}  }}^2}^{1/2}\norm{m}_{2,2\pr{d-1}},
\end{equation}
where $C_{p,d}$ depends only on $p$ and $d$.
\end{Corollary}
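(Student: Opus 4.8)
\emph{Overview.} The plan is to obtain the two estimates by substituting the linear field into Theorem~\ref{thm:Hannan_con} and Theorem~\ref{thm:MW_condition} respectively. What makes every quantity explicit is that, for a translate $m\circ T^{\grk}$ of an orthomartingale difference, both the projection $\pi_{\grj}\pr{m\circ T^{\grk}}$ and the conditional expectation $\E{m\circ T^{\grk}\mid\Fca_{\gr{0}}}$ collapse to a single term. Indeed, from $\E{m\mid\Fca_{-\gr{e_q}}}=0$ together with the commuting property one obtains the marginal annihilation $\E{m\mid\Fca_{\grm}}=0$ whenever $\grm$ has at least one coordinate $\leqslant -1$ (condition first on $\Fca_{\gr{0}}$ and use commutation to replace $\Fca_{\grm}$ by $\Fca_{\min\ens{\gr{0},\grm}}\subset\Fca_{-\gr{e_q}}$); consequently $\E{m\circ T^{\grk}\mid\Fca_{\gr{0}}}=m\circ T^{\grk}$ if $\grk\imd\gr{0}$ and $0$ otherwise, while $\pi_{\grj}\pr{m\circ T^{\grk}}=m\circ T^{\grk}$ if $\grj=\grk$ and $0$ otherwise. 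Throughout I use the causal representation $f=\sum_{\grj\smd\gr{0}}a_{\grj}\,m\circ T^{-\grj}$, which is the one rendering $f$ measurable with respect to $\Fca_{\gr{0}}$, the sum converging in $\el^2$ since $\pr{a_{\gri}}\in\ell^2$ and the translates are orthogonal.

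\emph{The Hannan bound \eqref{eq:Hannan_processus_lineaires}.} First I would check the hypotheses of Theorem~\ref{thm:Hannan_con}: measurability of $f$ with respect to $\sigma\pr{\bigcup_{\gri}T^{\gri}\Fca_{\gr{0}}}$ is clear, whereas $\E{f\mid T_q^{\ell}\Fca_{\gr{0}}}=\sum_{\grj\smd\gr{0},\,j_q\geqslant\ell}a_{\grj}\,m\circ T^{-\grj}$ (all other terms being annihilated), whose $\el^2$-norm equals $\big(\sum_{j_q\geqslant\ell}a_{\grj}^2\big)^{1/2}\norm{m}_2\to 0$. By the projection rule above, $\pi_{\grk}\pr{f}=a_{-\grk}\,m\circ T^{\grk}$ when $\grk\imd\gr{0}$ and $0$ otherwise; measure preservation gives $\norm{\pi_{\grk}\pr{f}}_{2,2\pr{d-1}}=\abs{a_{-\grk}}\norm{m}_{2,2\pr{d-1}}$, and summing over $\grk\imd\gr{0}$ (re-indexing by $\grj=-\grk$) reproduces exactly the right-hand side of Theorem~\ref{thm:Hannan_con}, hence \eqref{eq:Hannan_processus_lineaires}.

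\emph{The Maxwell--Woodroofe bound \eqref{eq:MW_processus_lineaires}.} Here I would apply Theorem~\ref{thm:MW_condition}, so the task is to evaluate $\E{S_{\grn}\pr{f}\mid\Fca_{\gr{0}}}$. Expanding $S_{\grn}\pr{f}=\sum_{\gr{0}\imd\gri\imd\grn-\gr{1}}\sum_{\grj\smd\gr{0}}a_{\grj}\,m\circ T^{\gri-\grj}$ and retaining only the surviving terms $\gri\imd\grj$, then re-indexing by $\gr{\ell}=\grj-\gri\smd\gr{0}$, yields the identity $\E{S_{\grn}\pr{f}\mid\Fca_{\gr{0}}}=\sum_{\gr{\ell}\smd\gr{0}}c_{\gr{\ell}}\,m\circ T^{-\gr{\ell}}$ with $c_{\gr{\ell}}=\sum_{\gr{0}\imd\gri\imd\grn-\gr{1}}a_{\gri+\gr{\ell}}$. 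It then remains to bound $\norm{\sum_{\gr{\ell}\smd\gr{0}}c_{\gr{\ell}}\,m\circ T^{-\gr{\ell}}}_{2,2\pr{d-1}}$ by $C_d\big(\sum_{\gr{\ell}}c_{\gr{\ell}}^2\big)^{1/2}\norm{m}_{2,2\pr{d-1}}$, after which summation over $\grn$ against the weights $\abs{\grn}^{-3/2}$ gives \eqref{eq:MW_processus_lineaires}.

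\emph{Main obstacle.} The genuinely technical ingredient is the weighted orthogonality estimate $\norm{\sum_{\gr{\ell}}c_{\gr{\ell}}\,m\circ T^{-\gr{\ell}}}_{2,r}\leqslant C_{d,r}\big(\sum_{\gr{\ell}}c_{\gr{\ell}}^2\big)^{1/2}\norm{m}_{2,r}$ in the Orlicz space $\el_{2,r}$. Since $\pr{m\circ T^{-\gr{\ell}}}_{\gr{\ell}}$ is an orthomartingale difference field, I would prove it by applying the one–dimensional Burkholder inequality successively in each of the $d$ directions, reducing the left-hand side to the square function $\norm{\big(\sum_{\gr{\ell}}c_{\gr{\ell}}^2\pr{m\circ T^{-\gr{\ell}}}^2\big)^{1/2}}_{2,r}$; squaring transfers the problem to $\el_{1,r/2}$, where $\varphi_{1,r/2}$ is convex so the triangle inequality is available, and there measure preservation gives $\norm{\pr{m\circ T^{-\gr{\ell}}}^2}_{1,r/2}=\norm{m}_{2,r}^2$. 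Taking $r=2\pr{d-1}$ then closes the estimate. The delicate points are the passage between the $\el_{2,r}$- and $\el_{1,r/2}$-norms, which is only an equivalence with $r$-dependent constants, and the accumulation of Burkholder constants over the $d$ coordinate directions; this last control is precisely the orthomartingale machinery already underlying Theorem~\ref{thm:LIL_orthomartingales}.
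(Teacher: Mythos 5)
Your proposal follows essentially the same route as the paper: for \eqref{eq:Hannan_processus_lineaires} you compute $\pi_{\grk}\pr{f}$ and invoke Theorem~\ref{thm:Hannan_con}, and for \eqref{eq:MW_processus_lineaires} you compute $\E{S_{\grn}\pr{f}\mid\Fca_{\gr{0}}}$ and reduce the weighted orthogonality estimate to a Burkholder-type square-function bound in $\el_{2,2\pr{d-1}}$, extended to orthomartingale differences by induction on the direction, which is exactly what the paper does by citing Burkholder's Lemmas 3.1 and 6.1 together with Lemma~\ref{lem:norm_Orlicz_puissances}. The only slip is cosmetic: squaring sends $\el_{2,r}$ to $\el_{1,r}$ (the logarithmic exponent is preserved, not halved, as in \eqref{eq:norm_Orlicz_carre}), which does not affect the argument.
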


\begin{Remark}
In \cite{giraudo2019bounded}, linear processes were also investigated but with the 
assumption that the innovations are i.i.d.. In this case, the normalization in the definition of 
maximal function is weaker.
\end{Remark}     
 
 One of the points of considering orthomartingale innovations is the decomposition of a stationary process 
 as a sum of linear process. More precisely, let $\pr{T^{-\gri}\Fca_{\gr{0}}}_{\gri\in\Z^d}$ be a commuting
 filtration. Define the subspaces  
 \begin{equation}
 V_d :=\ens{f\in\mathbb L^1, f\mbox{ is }\Fca_{\gr{0}}-\mbox{measurable and for all }q\in [d], \E{f\mid T^{\gr{e_q}}\Fca_{\gr{0}}   }=0}
 \end{equation}
\begin{equation}\label{eq:dfn_espace_W_d}
W_d=V_d\cap \mathbb L_{2,2\pr{d-1}}.
\end{equation}

\begin{Corollary}\label{cor:application_champs_multi_lineaires_cond_proj}
Assume that there exists a sequence $\pr{e_k}_{k\geq 1}$ of elements of $W_d$ such that 
each element $f$ of $W_d$ can be writen as $\sum_{k=1}^{+\infty}c_ke_k$, where the limit 
is taken with respect to the $\mathbb L_{2,2\pr{d-1}}$-norm and $\norm{e_k}_{2,2\pr{d-1}}\leq 1$. Let $f$ be
 an $\f_{\gr{0}}$-measurable function such that for each $q\in [d]$, $\E{ 
   f\mid T_q^\ell\f_{\gr{0}} }\to 0$ as $\ell\to +\infty$. Then $f$ admits the representation 
   \begin{equation}\label{eq:representation_de_f}
   f=\sum_{\grj \smd \gr{0}} a_{k,\grj}\pr{f} U^{-\grj }e_k
   \end{equation}
   and for all $1<p<2$, the following inequalities holds:
   \begin{equation}\label{eq:Hannan_processus_multi_lineaires}
\norm{M\pr{f}}_p\leq C_{p,d}\sum_{k\geq 1}\sum_{\gri\smd \gr{0}}\abs{   a_{k,\gri}\pr{f}  } ;
\end{equation}
   \begin{equation}\label{eq:MW_processus_multi_lineaires}
\norm{M\pr{f}}_p\leq C_{p,d}\sum_{k\geq 1} \sum_{\grn\smd \gr{1}} 
\frac 1{\abs{\grn}^{3/2}}
 \pr{ \sum_{\gr{\ell}\smd\gr{0}}\pr{\sum_{\gr{0}\imd \gri\imd \grn-\gr{1}}
a_{ k,\gri + \gr{\ell}  }\pr{f}}^2}^{1/2}.
\end{equation}
\end{Corollary}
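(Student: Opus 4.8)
The plan is to reduce the statement to Corollary~\ref{cor:application_champs_lineaires_cond_proj} (linear fields with orthomartingale innovations) applied to each basis element $e_k$ separately, and then to recombine the pieces using the subadditivity of the maximal operator $M$. The whole point of introducing the spanning family $\pr{e_k}_{k\geq 1}$ is to decouple the ``spatial'' dependence (encoded in the coefficients $a_{k,\grj}\pr{f}$) from the ``innovation'' structure (the fixed orthomartingale differences $e_k$), so that the single-innovation corollary can be invoked term by term.

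First I would produce the representation \eqref{eq:representation_de_f}. Since $f$ is $\f_{\gr{0}}$-measurable, one has $\mathbb E_\ell^{(q)}\left[f\right]=f$ for every $\ell\geq 0$, whence $\pi_{j_q}^{(q)}\pr{f}=0$ as soon as $j_q\geq 1$, so that $\pi_{\grm}\pr{f}=0$ unless $\grm\imd\gr{0}$. Using that $\pr{\f_{\gri}}_{\gri\in\Z^d}$ is a commuting filtration together with the hypothesis $\E{f\mid T_q^\ell\f_{\gr{0}}}\to 0$ for each $q$, the telescoping of the one-dimensional projections in each direction converges and recovers $f$, giving
\begin{equation}
f=\sum_{\grm\imd\gr{0}}\pi_{\grm}\pr{f}=\sum_{\grj\smd\gr{0}}\pi_{-\grj}\pr{f}.
\end{equation}
Setting $g_{\grj}:=U^{\grj}\pi_{-\grj}\pr{f}$ and using the commutation of conditional expectations with $T$, each $g_{\grj}$ is $\f_{\gr{0}}$-measurable and satisfies $\E{g_{\grj}\mid T^{\gr{e_q}}\f_{\gr{0}}}=0$ for all $q\in[d]$, i.e. $g_{\grj}\in V_d$, with $\pi_{-\grj}\pr{f}=U^{-\grj}g_{\grj}$. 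Since conditional expectations are contractions on the Orlicz space $\el_{2,2\pr{d-1}}$, we obtain $g_{\grj}\in\el_{2,2\pr{d-1}}$ (the claimed inequalities being void when their right-hand sides diverge), hence $g_{\grj}\in W_d$. Expanding $g_{\grj}=\sum_{k\geq 1}a_{k,\grj}\pr{f}e_k$ in the spanning family furnished by the hypothesis and substituting yields exactly \eqref{eq:representation_de_f}. I expect this to be the main obstacle: one must carefully justify both the convergence and the completeness of the Hannan-type decomposition from the regression condition, and the passage of the projections into $\el_{2,2\pr{d-1}}$ so that the assumed $\el_{2,2\pr{d-1}}$-expansion is legitimate.

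Finally, fix $k$ and set $f_k:=\sum_{\grj\smd\gr{0}}a_{k,\grj}\pr{f}U^{-\grj}e_k$. Because $e_k\in W_d\subset V_d$, the random field $\pr{e_k\circ T^{\gri}}_{\gri\in\Z^d}$ is a strictly stationary orthomartingale difference random field, so $f_k$ is precisely a causal linear field of the type treated in Corollary~\ref{cor:application_champs_lineaires_cond_proj} with innovation $m=e_k$ and coefficients $a_{k,\grj}\pr{f}$. Applying \eqref{eq:Hannan_processus_lineaires} and \eqref{eq:MW_processus_lineaires} and using $\norm{e_k}_{2,2\pr{d-1}}\leq 1$ gives
\begin{equation}
\norm{M\pr{f_k}}_p\leq C_{p,d}\sum_{\gri\smd\gr{0}}\abs{a_{k,\gri}\pr{f}}
\end{equation}
and the corresponding Maxwell--Woodroofe bound obtained from \eqref{eq:MW_processus_lineaires}. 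Since $\abs{S_{\grn}\pr{f}}\leq\sum_k\abs{S_{\grn}\pr{f_k}}$ pointwise, the maximal operator is subadditive, $M\pr{f}\leq\sum_k M\pr{f_k}$, and the triangle inequality in $\el^p$ yields $\norm{M\pr{f}}_p\leq\sum_k\norm{M\pr{f_k}}_p$. Summing the per-$k$ estimates then produces \eqref{eq:Hannan_processus_multi_lineaires} and \eqref{eq:MW_processus_multi_lineaires}.
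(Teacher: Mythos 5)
Your proposal is correct and follows essentially the same route as the paper: the paper also derives \eqref{eq:representation_de_f} by writing $f=\sum_{\grj\smd\gr{0}}\pi_{-\grj}\pr{f}$, observing that $U^{\grj}\pi_{-\grj}\pr{f}\in W_d$ and expanding it in the family $\pr{e_k}_{k\geq 1}$, and then obtains the two inequalities by applying Corollary~\ref{cor:application_champs_lineaires_cond_proj} to each linear process in the representation and summing. Your write-up is in fact somewhat more detailed than the paper's (in justifying membership of $U^{\grj}\pi_{-\grj}\pr{f}$ in $W_d$ and the subadditivity step), but the underlying argument is identical.
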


\section{Proofs}

\subsection{Tools for the proofs}

\subsubsection{Global ideas of proofs}

Let us explain the main steps in the proofs of the results. 

Let us first focus on 
orthomartingale differences. The maximal 
function is defined as a supremum over all the $\grn \in \N^d$.
However, due to the lack of exponential inequalities for the 
maximal of partial sums on rectangles, we will instead work 
with other maximal functions, where the supremum is 
restricted to the elements of $\N^d$ whose components 
are powers of two. The martingale property helps 
to show that the moments of the former maximal function 
are bounded up to a constant by those of the later. 

We then have to control the deviation probability 
of the sum on a rectangle. It is convenient to 
control the latter probability intersected with the 
event where the sum (in one direction) of squares and conditional variances
of the random field is bounded 
by some $y$. The contribution of this term can be controlled 
by an application of the maximal ergodic theorem and we are left  
to control moment of maximal functions in lower dimension.
 Then we use an induction argument. 
 
For results concerning projective conditions, they are 
consequences of the result for orthomartingales after 
an appropriated decomposition of the involved random 
field.

\subsubsection{Weak $\mathbb L^p$-spaces}
% lp-faible

The results of the paper involve all a control of 
the $\el^p$ norm of a maximal function. However, 
it will sometimes be more convenient to work directly 
with tails. To this aim, we will consider weak
 $\el^p$-spaces.

\begin{Definition}
Let $p>1$. The weak $\el^p$-space, denoted by $\el^{p,w}$, is the 
space of random variables $X$ such that $\sup_{t>0}t^p\PP\ens{\abs X>t}$ 
is finite. 
\end{Definition}

These spaces can be endowed with a norm.

\begin{Lemma}\label{lem:lp_faibles}
Let $1<p\leq 2$. Define the following norm on $\el^{p,w}$
\begin{equation}
 \norm{X}_{p,w}:=\sup_{A\in\mathcal F, 
 \PP\pr{A}>0}  \PP\pr{A}^{1/p-1}\E{\abs{X}\mathbf 1_A}.
\end{equation}

For all random variable $X\in \el^{p,w}$, the following inequalities hold:
\begin{equation}\label{eq:ineg_weak_Lp}
c_p\norm{X}_{p,w}\leq \pr{\sup_{t>0}t^p\PP\ens{\abs X>t}  }^{1/p}
\leq C_p\norm{X}_{p,w}\leq C_p\norm{X}_{p},
\end{equation}
where the positive constants $c_p$ and $C_p$ depend only on $p$.
\end{Lemma}

Let us give some ideas of proof. For the first inequality, express 
$\E{\abs{X}\mathbf 1_A}$ as an integral of the tail of the random variable 
$\abs{X}\mathbf 1_A$ and bound this tail by $\min\ens{t^{-p}\sup_{t>0}t^p\PP\ens{\abs X>t}  ,\PP\pr{A}}$. For the second inequality, bound $t^p\PP\ens{\abs X>t}$ by 
$t^{p-1}\E{\abs{X}\mathbf{1}\ens{\abs X>t}}$ and use the definition of 
$\norm{\cdot}_{p,w}$ to get that 
\begin{equation}
 t^p\PP\ens{\abs X>t}\leq \norm{X}_{p,w}t^{p-1}\PP\ens{\abs{X}>t}^{1-1/p}.
\end{equation}
Hence for each $t>0$, $t\pr{\PP\ens{\abs X>t}}^{1/p}\leq \norm{X}_{p,w}$.  
Finally, the last inequality in \eqref{eq:ineg_weak_Lp} follows from Hölder's inequality.

\subsubsection{Deviation inequalities}

The following deviation inequality is a consequence of Theorem~2.1 in 
\cite{MR2462551}.
 \begin{Proposition}\label{prop:inegalite_deviation_martingales}
  Let $\pr{d_j}_{j\geq 1}$ be a sequence of
square integrable martingale differences with 
  respect to the filtration $\pr{\Fca_j}_{j\geq 0}$. Then for all positive numbers $x$ and $y$, 
  the following inequality holds:
  \begin{equation}
  \PP\pr{\ens{ \abs{ \sum_{j=1}^nd_j}>x}\cap \ens{\sum_{j=1}^n 
 \pr{ d_j^2+\E{d_j^2\mid \Fca_{j-1}}} 
   \leq y} 
  }\leq 2\exp\pr{-\frac{x^2}{2y}  }.
  \end{equation}   
  
 \end{Proposition}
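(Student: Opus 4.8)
The plan is to combine the exponential supermartingale furnished by Theorem~2.1 of \cite{MR2462551} with a Chernoff-type argument localised on the event where the bracket is controlled. Write $S_n:=\sum_{j=1}^n d_j$ and $W_n:=\sum_{j=1}^n\pr{d_j^2+\E{d_j^2\mid\Fca_{j-1}}}$, so that the event in question is $\ens{\abs{S_n}>x}\cap\ens{W_n\leq y}$. The cited theorem guarantees that for every real $\lambda$ the process $E_n\pr{\lambda}:=\exp\pr{\lambda S_n-\frac{\lambda^2}2 W_n}$ is a nonnegative supermartingale with $E_0\pr{\lambda}=1$, hence $\E{E_n\pr{\lambda}}\leq 1$. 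The reason one works with the symmetrised bracket $d_j^2+\E{d_j^2\mid\Fca_{j-1}}$ rather than the conditional variance alone is precisely that this bound requires no boundedness of the increments: it rests on the elementary inequality $\exp\pr{v-v^2/2}\leq 1+v+v^2/2$, valid for all real $v$, applied to $v=\lambda d_j$ and followed by $\E{d_j\mid\Fca_{j-1}}=0$ together with the $\Fca_{j-1}$-measurability of $\E{d_j^2\mid\Fca_{j-1}}$.

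First I would fix $\lambda>0$ and restrict attention to the event $A:=\ens{S_n>x}\cap\ens{W_n\leq y}$. On $A$ one has $\lambda S_n>\lambda x$ and $-\frac{\lambda^2}2 W_n\geq-\frac{\lambda^2}2 y$, so that $E_n\pr{\lambda}\geq\exp\pr{\lambda x-\frac{\lambda^2}2 y}$ pointwise on $A$. Integrating this lower bound against $\mathbf 1_A$ and invoking $\E{E_n\pr{\lambda}}\leq 1$ gives
\[
\exp\pr{\lambda x-\frac{\lambda^2}2 y}\PP\pr{A}\leq\E{E_n\pr{\lambda}\mathbf 1_A}\leq 1,
\]
whence $\PP\pr{A}\leq\exp\pr{-\lambda x+\frac{\lambda^2}2 y}$ for every $\lambda>0$.

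Next I would optimise the exponent in $\lambda$: the map $\lambda\mapsto-\lambda x+\frac{\lambda^2}2 y$ attains its minimum at $\lambda=x/y>0$, where it equals $-x^2/\pr{2y}$, yielding $\PP\pr{\ens{S_n>x}\cap\ens{W_n\leq y}}\leq\exp\pr{-x^2/\pr{2y}}$. Finally, replacing $\pr{d_j}_{j\geq 1}$ by $\pr{-d_j}_{j\geq 1}$—still a martingale difference sequence for $\pr{\Fca_j}_{j\geq 0}$, and leaving $W_n$ unchanged—gives the identical bound for $\ens{S_n<-x}\cap\ens{W_n\leq y}$; adding the two estimates and using $\ens{\abs{S_n}>x}=\ens{S_n>x}\cup\ens{S_n<-x}$ produces the factor $2$ and concludes.

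The argument is routine once the supermartingale is available, so the only genuine point of care is the sign bookkeeping when passing from $A$ to the pointwise lower bound for $E_n\pr{\lambda}$—this is where both $\lambda>0$ and $W_n\leq y$ are used. The substantive input is the supermartingale property itself: one must check that Theorem~2.1 of \cite{MR2462551} applies to the combination $d_j^2+\E{d_j^2\mid\Fca_{j-1}}$ under the sole hypothesis of square-integrability, which is exactly what lets the final inequality dispense with any boundedness assumption on the increments.
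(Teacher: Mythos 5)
Your argument is correct and is precisely the derivation the paper leaves implicit: the paper gives no proof, merely stating that the inequality ``is a consequence of Theorem~2.1 in \cite{MR2462551}'', and your Chernoff bound localised on $\ens{W_n\leq y}$, optimisation at $\lambda=x/y$, and symmetrisation via $-d_j$ is exactly how that consequence is drawn (it is the standard deduction in Bercu--Touati, resting on $\exp\pr{v-v^2/2}\leq 1+v+v^2/2$ as you note). Nothing further is needed.
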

 
 The following is Lemma~3.2 in \cite{giraudo2019_deviation}.
\begin{Lemma}\label{lem:Lemma_weak_type_estimate}
  Assume that $X$ and $Y$ are two 
  non-negative random variables such that for each positive $x$, 
  we have 
  \begin{equation}\label{eq:weak_type_assumption}
   x\PP\ens{X>x}\leqslant\mathbb 
  E\left[Y \mathbf 1\ens{X\geqslant x}\right].
  \end{equation}
Then for each $t$, the following inequality holds:
  \begin{equation}
   \PP\ens{X>2t}\leqslant \int_1^{+\infty}\PP\ens{Y>st}\mathrm ds.
  \end{equation}
\end{Lemma}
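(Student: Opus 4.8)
The plan is to reduce the claimed tail bound to an elementary comparison between the tail of $X$ at level $2t$ and the truncated first moment of $Y$ above level $t$. First I would rewrite the right-hand side by the change of variables $u=st$ (valid for $t>0$), giving
\begin{equation}
\int_1^{+\infty}\PP\ens{Y>st}\,\mathrm ds=\frac 1t\int_t^{+\infty}\PP\ens{Y>u}\,\mathrm du=\frac 1t\E{\pr{Y-t}^+}.
\end{equation}
Thus the assertion is equivalent to the inequality $t\,\PP\ens{X>2t}\leqslant \int_t^{+\infty}\PP\ens{Y>u}\,\mathrm du$, and it is this form I would aim to establish.

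The core of the argument is to feed the hypothesis \eqref{eq:weak_type_assumption} in at a level slightly above $2t$ and then split the resulting expectation through the layer-cake formula. Fixing $\eps>0$ and applying \eqref{eq:weak_type_assumption} with $x=2t+\eps$ yields $\pr{2t+\eps}\PP\ens{X>2t+\eps}\leqslant\E{Y\mathbf 1\ens{X\geqslant 2t+\eps}}$. Writing the right-hand side as $\int_0^{+\infty}\PP\ens{Y>v,\ X\geqslant 2t+\eps}\,\mathrm dv$ and cutting the integral at $v=t$, I would bound the lower part by $t\,\PP\ens{X\geqslant 2t+\eps}$ and the upper part by $\int_t^{+\infty}\PP\ens{Y>v}\,\mathrm dv$. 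The key structural observation is that $\ens{X\geqslant 2t+\eps}\subset\ens{X>2t}$, so the lower part is in fact at most $t\,\PP\ens{X>2t}$, which is exactly the term I will want to absorb on the left.

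Combining these estimates produces $\pr{2t+\eps}\PP\ens{X>2t+\eps}\leqslant t\,\PP\ens{X>2t}+\int_t^{+\infty}\PP\ens{Y>v}\,\mathrm dv$. Letting $\eps\to 0^+$ and using continuity from below for the increasing family of events $\ens{X>2t+\eps}\uparrow\ens{X>2t}$ turns this into $2t\,\PP\ens{X>2t}\leqslant t\,\PP\ens{X>2t}+\int_t^{+\infty}\PP\ens{Y>v}\,\mathrm dv$. Since $\PP\ens{X>2t}\leqslant 1<+\infty$, I may subtract $t\,\PP\ens{X>2t}$ from both sides and divide by $t$, which together with the first display gives the claim.

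The only genuinely delicate point is the distinction between the events $\ens{X\geqslant x}$ and $\ens{X>x}$: applying the hypothesis directly at $x=2t$ would leave the term $t\,\PP\ens{X\geqslant 2t}$, which need not be absorbed by $t\,\PP\ens{X>2t}$ when $X$ carries an atom at $2t$. Passing to the level $2t+\eps$ and taking a limit is precisely what circumvents this, and it is where I expect the main (though modest) care to be required; everything else is the routine Fubini/layer-cake bookkeeping.
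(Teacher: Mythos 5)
Your argument is correct. The paper does not reproduce a proof of this lemma (it is imported verbatim as Lemma~3.2 of the cited reference \cite{giraudo2019_deviation}), so there is nothing internal to compare against, but your derivation is a valid self-contained one: the change of variables reducing the claim to $t\,\PP\ens{X>2t}\leq \int_t^{+\infty}\PP\ens{Y>u}\,\mathrm du$ is exact, the layer-cake split of $\E{Y\mathbf 1\ens{X\geq 2t+\eps}}$ at level $v=t$ gives precisely the two bounds you state, and the passage to the limit $\eps\to 0^+$ via continuity from below legitimately converts $\PP\ens{X>2t+\eps}$ into $\PP\ens{X>2t}$ so that the term $t\,\PP\ens{X>2t}$ can be subtracted. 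You are also right that this $\eps$-shift is the one genuinely delicate point: applying the hypothesis directly at $x=2t$ would leave $t\,\PP\ens{X\geq 2t}$ on the right, which cannot be absorbed when $X$ has an atom at $2t$, and your workaround handles this cleanly. (The statement is only meaningful for $t>0$, which is the case you treat; for $t\leq 0$ the right-hand side is infinite unless $Y=0$ almost surely, in which case the hypothesis forces $X=0$ almost surely as well.)
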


\subsubsection{Facts on Orlicz spaces} 
 
 \begin{Lemma}[Lemma~3.7 in \cite{giraudo2019bounded}]\label{lem:norme_Orlicz_c_fois_fct_YOug}
 Let $p\geq 1$ and $r\geq 0$. Let $\varphi:=\varphi_{p,q}$ and 
 let $a>0$ be a constant. There exists a constant $c$ depending only on 
 $a$, $p$ and $q$ such that for all random variable $X$, 
 \begin{equation}
 \norm{X}_{\varphi}\leq c\norm{X}_{a\varphi}.
 \end{equation}
 \end{Lemma}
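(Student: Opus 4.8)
The plan is to reduce everything to a single scaling property of the Young function $\varphi=\varphi_{p,r}$, namely that dividing the argument by a factor $c\geq 1$ decreases $\varphi$ at least like $c^{-p}$. Concretely, I would first prove the elementary pointwise inequality
\begin{equation}
\varphi\pr{x/c}\leq c^{-p}\varphi\pr{x},\quad x\geq 0,\ c\geq 1.
\end{equation}
This follows directly from $\varphi\pr{x}=x^p\pr{1+\log\pr{1+x}}^r$: the factor $x^p$ contributes exactly $c^{-p}$, while for $c\geq 1$ one has $x/c\leq x$, hence $\log\pr{1+x/c}\leq \log\pr{1+x}$, so, using $r\geq 0$, the logarithmic factor only decreases. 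This is the only place where the precise form of $\varphi$ (monotonicity together with $r\geq 0$) is used.

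Next I would unwind the two Orlicz norms. Since the norm attached to $a\varphi$ satisfies $\norm{X}_{a\varphi}=\inf\ens{\lambda>0\mid \E{\varphi\pr{X/\lambda}}\leq 1/a}$, the two definitions differ only through the level $1$ versus $1/a$ against which the expectation is compared. When $a\geq 1$ the level $1/a\leq 1$ makes the constraint defining $\norm{\cdot}_{a\varphi}$ more stringent, so the admissible set of $\lambda$ shrinks and $\norm{X}_\varphi\leq \norm{X}_{a\varphi}$ with $c=1$; the content is therefore entirely in the regime $a<1$.

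For $a<1$ I would argue as follows. The bound is trivial when $\norm{X}_{a\varphi}=+\infty$, so assume it is finite and fix any $\lambda>0$ with $\E{\varphi\pr{X/\lambda}}\leq 1/a$. Applying the scaling inequality with $c:=a^{-1/p}>1$, and noting that then $c^{-p}=a$, gives
\begin{equation}
\E{\varphi\pr{\frac{X}{c\lambda}}}\leq c^{-p}\E{\varphi\pr{\frac{X}{\lambda}}}\leq \frac{c^{-p}}{a}=1,
\end{equation}
so $c\lambda$ lies in the constraint set defining $\norm{\cdot}_\varphi$, whence $\norm{X}_\varphi\leq c\lambda$. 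Taking the infimum over all admissible $\lambda$ yields $\norm{X}_\varphi\leq a^{-1/p}\norm{X}_{a\varphi}$; working through the infimum rather than assuming it is attained also covers the degenerate case $\norm{X}_{a\varphi}=0$ for free. Combining the two regimes, the constant $c=\max\ens{1,a^{-1/p}}$ works, and it depends only on $a$ and $p$.

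I do not expect a genuine obstacle: the result is a soft comparison between two Orlicz norms whose Young functions differ only by a positive multiplicative constant, and the homogeneity-type estimate makes the comparison immediate. The only mildly delicate point is organizing the argument through admissible $\lambda$ and a final infimum, rather than assuming the infimum in the definition of $\norm{\cdot}_{a\varphi}$ is attained; the scaling inequality is exactly what lets this go through uniformly, including the degenerate cases.
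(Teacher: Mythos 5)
Your argument is correct: the pointwise scaling bound $\varphi\pr{x/c}\leq c^{-p}\varphi\pr{x}$ for $c\geq 1$ (using only $r\geq 0$ and monotonicity of the logarithmic factor), combined with the rewriting $\norm{X}_{a\varphi}=\inf\ens{\lambda>0\mid \E{\varphi\pr{X/\lambda}}\leq 1/a}$ and the case split $a\geq 1$ versus $a<1$, yields $\norm{X}_{\varphi}\leq \max\ens{1,a^{-1/p}}\norm{X}_{a\varphi}$, and your handling of the infimum covers the degenerate cases. The paper itself gives no proof of this lemma (it is imported as Lemma~3.7 of \cite{giraudo2019bounded}), but your argument is the standard one for comparing Orlicz norms whose Young functions differ by a multiplicative constant; note in passing that your constant depends only on $a$ and $p$, slightly sharper than the statement, which allows dependence on $r$ as well.
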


\begin{Lemma}[Lemma~3.8 in \cite{giraudo2019bounded}]\label{lem:norm_Orlicz_puissances}
Let $r\geq 0$. There exists a constant $c_r$ such that for any random variable $X$, 
\begin{equation}\label{eq:norm_Orlicz_carre}
\norm{X^2}_{1,r}\leq c_r\norm{X}_{2,r}^2;
\end{equation}
\begin{equation}\label{eq:norm_Orlicz_racine}
\norm{X^{1/2}}_{2,r}\leq c_r\norm{X}_{1,r}^{1/2}.
\end{equation}
\end{Lemma}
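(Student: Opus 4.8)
The plan is to argue directly from the Luxemburg-norm definition of $\norm{\cdot}_{p,r}$ and to reduce each of the two inequalities to an elementary pointwise comparison of the relevant Orlicz integrands after the nonlinear change of variable ($x\mapsto x^2$ or $x\mapsto x^{1/2}$). For \eqref{eq:norm_Orlicz_carre} I would set $\lambda:=\norm{X}_{2,r}$ (the case $\lambda=0$ forcing $X=0$ a.s. being trivial) so that, by continuity of $\varphi_{2,r}$ and monotone convergence, $\E{\varphi_{2,r}\pr{\abs X/\lambda}}\leq 1$. Writing $Y:=\abs X/\lambda\geq 0$, this says $\E{Y^2\pr{1+\log\pr{1+Y}}^r}\leq 1$. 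It then suffices to produce a constant $c\geq 1$ with $\E{\varphi_{1,r}\pr{X^2/\pr{c\lambda^2}}}\leq 1$, since this gives $\norm{X^2}_{1,r}\leq c\lambda^2$. Substituting $X^2/\pr{c\lambda^2}=Y^2/c$, the decisive step is the logarithmic bound $1+\log\pr{1+Y^2/c}\leq 2\pr{1+\log\pr{1+Y}}$, valid for $c\geq 1$ and $Y\geq 0$ because $1+Y^2\leq \pr{1+Y}^2$. This yields $\E{\varphi_{1,r}\pr{Y^2/c}}\leq \pr{2^r/c}\E{Y^2\pr{1+\log\pr{1+Y}}^r}\leq 2^r/c$, so the choice $c=2^r$ closes the first estimate.

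For \eqref{eq:norm_Orlicz_racine} the structure is mirror-symmetric. I would set $\lambda:=\norm{X}_{1,r}$, put $Z:=\abs X/\lambda\geq 0$ so that $\E{Z\pr{1+\log\pr{1+Z}}^r}\leq 1$, and seek $c\geq 1$ with $\E{\varphi_{2,r}\pr{X^{1/2}/\pr{c\lambda}^{1/2}}}\leq 1$, which gives $\norm{X^{1/2}}_{2,r}\leq \pr{c\lambda}^{1/2}$. After the substitution $X^{1/2}/\pr{c\lambda}^{1/2}=\pr{Z/c}^{1/2}$ the integrand becomes $\pr{Z/c}\pr{1+\log\pr{1+\pr{Z/c}^{1/2}}}^r$, and the key pointwise estimate is now $1+\log\pr{1+\pr{Z/c}^{1/2}}\leq \pr{1+\log 2}\pr{1+\log\pr{1+Z}}$, obtained from $Z^{1/2}\leq 1+Z$ (hence $1+Z^{1/2}\leq 2\pr{1+Z}$). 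Choosing $c=\pr{1+\log 2}^r$ then forces the Orlicz integral below $1$ and gives $\norm{X^{1/2}}_{2,r}\leq \pr{1+\log 2}^{r/2}\norm{X}_{1,r}^{1/2}$.

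The computation is routine once the two logarithmic comparisons are in hand; the only genuine content is the observation that squaring, respectively taking square roots, distorts the logarithmic weight $\pr{1+\log\pr{1+\cdot}}^r$ only by a bounded multiplicative factor, so that this weight can be absorbed into the constant while the leading power transforms exactly as required. Taking $c_r$ to be the larger of $2^r$ and $\pr{1+\log 2}^{r/2}$ (in fact $2^r$ dominates both) furnishes a single constant valid for both inequalities. I do not expect a real obstacle beyond the bookkeeping, the most delicate point being merely the justification that $\E{\varphi_{p,r}\pr{\abs X/\lambda}}\leq 1$ holds at $\lambda=\norm{X}_{p,r}$, which is standard for Luxemburg norms of continuous Young functions vanishing at the origin.
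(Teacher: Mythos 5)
Your argument is correct. Note that the paper does not prove this lemma at all: it is imported verbatim as Lemma~3.8 of \cite{giraudo2019bounded}, so there is no in-paper proof to compare against. Your derivation is the natural one — reduce to the normalized variable via the Luxemburg definition (with the standard monotone-convergence justification that the infimum is attained in the sense that $\E{\varphi_{p,r}\pr{\abs{X}/\lambda}}\leq 1$ at $\lambda=\norm{X}_{p,r}$), then absorb the distortion of the logarithmic weight under $x\mapsto x^2$ (via $1+Y^2\leq\pr{1+Y}^2$) and under $x\mapsto x^{1/2}$ (via $Z^{1/2}\leq 1+Z$) into the constant; both pointwise comparisons and the resulting choices $c=2^r$ and $c=\pr{1+\log 2}^r$ check out.
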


\begin{Lemma}[Lemma~3.11 in \cite{giraudo2019bounded}]\label{lem:from_weak_type_to_Orlicz}
For all $p>1$ and $r\geq 0$, there exists a constant $c_{p,r}$ such that if 
 $X$ and $Y$ are two 
  non-negative random variables satisfying for each positive $x$, 
  \begin{equation} 
   x\PP\ens{X>x}\leqslant\mathbb 
  E\left[Y \mathbf 1\ens{X\geqslant x}\right],
  \end{equation}
  then $\norm{X}_{p,r}\leq c_{p,r}\norm{Y}_{p,r}$. 
\end{Lemma}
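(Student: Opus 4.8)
The plan is to reduce the claimed bound to a single estimate on $\E{\varphi_{p,r}\pr{X/\lambda}}$ and then to insert the hypothesis through Lemma~\ref{lem:Lemma_weak_type_estimate}. By the definition of the Orlicz norm it is enough to produce a $\lambda$ of the form $\lambda=c_{p,r}\norm{Y}_{p,r}$ for which $\E{\varphi_{p,r}\pr{X/\lambda}}\leq 1$, since then $\norm{X}_{p,r}\leq\lambda$ by definition. The cases $\norm{Y}_{p,r}\in\ens{0,+\infty}$ are trivial (if $\norm{Y}_{p,r}=0$ then $Y=0$ almost surely, and the hypothesis forces $X=0$ almost surely), so one may assume $0<\norm{Y}_{p,r}<+\infty$.

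First I would express the expectation by the layer-cake formula: since $\varphi_{p,r}$ is nondecreasing, absolutely continuous and vanishes at $0$,
\[
\E{\varphi_{p,r}\pr{X/\lambda}}=\int_0^{+\infty}\varphi_{p,r}'\pr{u}\PP\ens{X>\lambda u}\,du.
\]
Applying Lemma~\ref{lem:Lemma_weak_type_estimate} with $2t=\lambda u$ bounds $\PP\ens{X>\lambda u}$ by $\int_1^{+\infty}\PP\ens{Y>s\lambda u/2}\,ds$. Substituting and exchanging the order of integration by Tonelli (all integrands being non-negative), the inner integral in $u$ is, for each fixed $s$, exactly $\E{\varphi_{p,r}\pr{Y/\mu}}$ with $\mu=s\lambda/2$, again by the layer-cake formula. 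This yields
\[
\E{\varphi_{p,r}\pr{X/\lambda}}\leq \int_1^{+\infty}\E{\varphi_{p,r}\pr{\frac{2Y}{s\lambda}}}\,ds.
\]

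The decisive elementary fact is the scaling inequality $\varphi_{p,r}\pr{ax}\leq a^p\varphi_{p,r}\pr{x}$, valid for all $0<a\leq 1$ and $x\geq 0$; it holds because $r\geq 0$ and $\log\pr{1+ax}\leq\log\pr{1+x}$. Setting $\lambda=c\norm{Y}_{p,r}$ and writing $2Y/\pr{s\lambda}=\pr{2/\pr{sc}}\,Y/\norm{Y}_{p,r}$, the dilation factor $a=2/\pr{sc}$ is at most $1$ whenever $c\geq 2$ and $s\geq 1$. Combining the scaling inequality with the defining property $\E{\varphi_{p,r}\pr{Y/\norm{Y}_{p,r}}}\leq 1$ gives $\E{\varphi_{p,r}\pr{2Y/\pr{s\lambda}}}\leq \pr{2/\pr{sc}}^p$, and therefore
\[
\E{\varphi_{p,r}\pr{X/\lambda}}\leq \pr{\frac 2c}^p\int_1^{+\infty}s^{-p}\,ds=\pr{\frac 2c}^p\frac{1}{p-1}.
\]
It then suffices to pick $c=c_{p,r}$ large enough that this last quantity is at most $1$ while retaining $c\geq 2$, for instance $c=\max\ens{2,\,2\pr{p-1}^{-1/p}}$.

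Granting Lemma~\ref{lem:Lemma_weak_type_estimate}, the argument is essentially self-contained and I do not expect a serious obstacle. The two points deserving care are precisely where the hypotheses are used: the convergence of $\int_1^{+\infty}s^{-p}\,ds$ requires $p>1$, and the scaling inequality requires $r\geq 0$ together with dilating by a factor at most $1$ (which is what dictates the constraint $c\geq 2$). The remaining verifications, namely the two layer-cake representations and the Tonelli interchange, are routine because every integrand is non-negative.
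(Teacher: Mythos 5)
Your argument is correct and complete: the reduction to a single Orlicz-modular estimate, the two layer-cake representations, the Tonelli interchange, the insertion of Lemma~\ref{lem:Lemma_weak_type_estimate}, and the scaling bound $\varphi_{p,r}\pr{ax}\leq a^p\varphi_{p,r}\pr{x}$ for $0<a\leq 1$ all hold as stated, and the choice $c=\max\ens{2,\,2\pr{p-1}^{-1/p}}$ does the job. Note that the present paper does not prove this lemma at all — it imports it as Lemma~3.11 of \cite{giraudo2019bounded} — so there is no in-paper proof to compare against; your derivation is the natural one and is self-contained modulo the cited tail inequality. The only step worth a word of justification is $\E{\varphi_{p,r}\pr{Y/\norm{Y}_{p,r}}}\leq 1$, which requires monotone convergence along $\lambda_n\downarrow\norm{Y}_{p,r}$ since the Orlicz norm is defined as an infimum, but this is routine.
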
 
 
 \begin{Lemma}\label{lem:control_somme_probas_par_Lpq}
 For any non-negative random variable $X$, $p\geq 1$ and $q\geq 0$, the 
 following inequalities hold, 
 \begin{equation}
 \sum_{k=1}^{+\infty}2^{kp}k^q \PP\ens{X>\frac{2^k}{\sqrt k}}
 \leq c_{p,q}\E{X^p\pr{\ln X}^{q+p/2}\mathbf 1\ens{X> 1}  },
 \end{equation}
  \begin{equation}\label{eq:somme_2puisk_kpuisq}
 \sum_{k=1}^{+\infty}2^{k }k^q \PP\ens{X> 2^{k/2}}
 \leq c_{q}\E{X^2\pr{\ln X}^{q}\mathbf 1\ens{X> 1}  },
 \end{equation}
 where $c_{p,q}$ depends only on $p$ and $q$ and $c_q$ only on $q$.
 \end{Lemma}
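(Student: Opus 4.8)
The plan is to prove both inequalities by one and the same device: write each tail probability as the expectation of an indicator, exchange the countable sum with the expectation by Tonelli's theorem, and then reduce matters to a deterministic bound on the resulting inner sum. For the first inequality I would write $\PP\ens{X>2^k/\sqrt k}=\E{\mathbf 1\ens{X>2^k/\sqrt k}}$, so that the left-hand side equals $\E{\sum_{k\geq 1}2^{kp}k^q\mathbf 1\ens{X>2^k/\sqrt k}}$. It then suffices to establish, for every real $x>1$, the pointwise estimate $\sum_{k\colon 2^k/\sqrt k<x}2^{kp}k^q\leq c_{p,q}x^p\pr{\ln x}^{q+p/2}$; integrating this against the law of $X$ yields the claim, the region $\ens{X\leq 1}$ contributing nothing since the inner sum is empty there (the smallest threshold, at $k=1$, already equals $2$).

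For the pointwise estimate I would first note that $k\mapsto 2^k/\sqrt k$ is increasing on the positive integers, so the indices entering the sum form an initial segment $\ens{1,\dots,K}$, where $K=K(x)$ is the largest index with $2^K/\sqrt K<x$. Because the factor $2^{kp}$ grows geometrically, the sum is dominated by a constant multiple of its last term: $\sum_{k=1}^K 2^{kp}k^q\leq K^q\sum_{k=1}^K 2^{kp}\leq C_p K^q 2^{Kp}$. Rewriting the defining inequality as $2^{Kp}<x^p K^{p/2}$ turns this into $C_p x^p K^{q+p/2}$, so everything comes down to controlling $K$ in terms of $\ln x$.

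This last control is the only point requiring genuine care. From $2^K<x\sqrt K$ I obtain $K<\log_2 x+\tfrac12\log_2 K$, and since $\log_2 K\leq K$ the term $\tfrac12\log_2 K$ can be absorbed into the left-hand side to give $K\leq 2\log_2 x$. Hence $K^{q+p/2}\leq c_{p,q}\pr{\ln x}^{q+p/2}$, which completes the pointwise estimate. This self-improving step is where the $\sqrt k$ appearing in the threshold interacts with the exponent, and it is the one place I expect to need to be attentive.

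The second inequality follows by the same scheme with the simpler threshold $2^{k/2}$. Here $2^{k/2}<x$ is equivalent to $k<2\log_2 x$, so the summation range is controlled by $\ln x$ directly, with no absorption needed. Bounding the geometric sum $\sum_{k=1}^K 2^k k^q\leq C K^q 2^K$ by its last term and using $2^K<x^2$ gives $\sum_{k\colon 2^{k/2}<x}2^k k^q\leq c_q x^2\pr{\ln x}^q$, and Tonelli's theorem once more produces the bound $c_q\E{X^2\pr{\ln X}^q\mathbf 1\ens{X>1}}$.
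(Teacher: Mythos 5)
Your proof is correct and follows essentially the same route as the paper: both arguments reduce the claim to the deterministic bound $\sum_{k\leq K}2^{kp}k^q\leq C_p 2^{Kp}K^q$ together with the control $K\leq 2\log_2 x$ obtained by absorbing $\tfrac12\log_2 K$ (the paper phrases this same absorption as $\sqrt j\leq 2^{j/2}$, i.e.\ $j\leq 2^j$). The only difference is organizational — you apply Tonelli directly and prove a pointwise estimate, while the paper partitions $\ens{X>a_k}$ into the slices $A_j=\ens{a_j<X\leq a_{j+1}}$ and reorders the double sum — which amounts to the same computation.
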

 \begin{proof}
 Let $a_k:=2^k/\sqrt k$ and $A_j$ be the event $\ens{a_k<X\leq a_{k+1}}$. Since 
 for all $k\geq 1$, the set  $\ens{X>a_k}$ is the disjoint union of $A_j$, $j\geq k$, we 
 have 
 \begin{equation}
 A:=\sum_{k=1}^{+\infty}2^{kp}k^q \PP\ens{X>\frac{2^k}{\sqrt k}}
 =\sum_{k=1}^{+\infty}2^{kp}k^q \sum_{j\geq k}\PP\pr{A_j}
 =\sum_{j=1}^{+\infty}\PP\pr{A_j} \sum_{k=1}^j2^{kp}k^q.
 \end{equation}
 Since there exists a constant $K_{p,q}$ such that for all $j\geq 1$, 
 $\sum_{k=1}^j2^{kp}k^q\leq K_{p,q}2^{jp}j^q$, it follows that 
 \begin{equation}
 A\leq K_{p,q}\sum_{j=1}^{+\infty}2^{jp}j^q\PP\pr{A_j}.
 \end{equation}
 Writing 
 \begin{equation}
 2^{jp}j^q\PP\pr{A_j}=a_j^pj^{q+p/2}\PP\pr{a_j<X\leq a_{j+1}}
 \leq \E{  X^pj^{q+p/2}\mathbf 1\ens{a_j<X\leq a_{j+1}}   },
 \end{equation}
 the previous estimate becomes 
 \begin{equation}
 A\leq K_{p,q}\sum_{j=1}^{+\infty}
 \E{  X^pj^{q+p/2}\mathbf 1\ens{a_j<X\leq a_{j+1}}   }. 
 \end{equation}
 For $x\in (a_j,a_{j+1}]$, we have in view of $2^j\geq j$ that 
 $2^j\leq x\sqrt j$, hence $2^j\leq x2^{j/2}$ which implies that 
 $j\ln 2\leq 2\ln x$. We end the proof by letting 
 $c_{p,q}:=\pr{2/\ln 2}^{q+p/2}$ 
 and by noticing that $\bigcup_{j\geq 1}A_j\subset \ens{X>1}$.
 
 The proof of \eqref{eq:somme_2puisk_kpuisq} is analogous and consequently 
 omitted.
 \end{proof}
 
 \subsection{Reduction to dyadics}
Let $d$ be a fixed integer and for $0\leq i\leq d-1$ define by  $\N_i$ the elements of 
$\pr{\N\setminus \ens{0}}^d$ whose coordinates $i+1,\dots,d$ are dyadic numbers. More 
formally, 
\begin{equation}
\N_i:=\ens{\gr{n}\in\N^d, \min_{1\leq q\leq d} n_q\geq 1\mbox{ and for all }
i+1\leq j\leq d, \exists k_j\in\N\mbox{ such that }n_j=2^{k_j}   }.
\end{equation}
We also define $\N_d$ as $\N^d$. Notice that $\N_0$ is the set of all the elements of 
$\N^d$ such that all the coordinates are powers of $2$.  The goal of this 
subsection is to show that it suffices to prove 
Theorem~\ref{thm:LIL_orthomartingales} where the supremum over $\N^d$ is replaced by 
the corresponding one over $\N_0$.

\begin{Proposition}\label{prop:reduction_dyadiques_ortho_martingales}
Let $\pr{m\circ T^{\gri}}_{\gri\in\Z^d}$ be  an orthomartingale difference random field with
respect to a commuting filtration 
$\pr{T^{-\gri}\f_{\gr{0}}}_{\gr{i}\in\Z^d}$. Then for all $1<p<2$, the following inequality holds 
\begin{equation}
\norm{
\sup_{\gr{n}\in\N^d}\frac{\abs{S_{\gr{n}}\pr{m}   }}{
\abs{\gr{n}}^{1/2}\prod_{i=1}^dLL\pr{n_i  }^{1/2}}}_p\leq 
c_{p,d}\norm{\sup_{\gr{n}\in\N_0}\frac{\abs{S_{\gr{n}}\pr{m}}}{
\abs{\gr{n}}^{1/2}\prod_{i=1}^dLL\pr{n_i  }^{1/2}}}_p,
\end{equation}
where $c_{p,d}$ depends only on $p$ and $d$.
\end{Proposition}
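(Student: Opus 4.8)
The plan is to eliminate the non-dyadic coordinates one at a time, exploiting that the partial sums of an orthomartingale taken along a single direction form a one-parameter martingale. Recall the sets $\N_i$ introduced in this subsection: $\N_d=\N^d$, while passing from $\N_i$ to $\N_{i-1}$ amounts precisely to replacing the supremum over all values of the $i$-th coordinate by the supremum over powers of two. I would therefore establish, for each $1\leq i\leq d$, the one-step estimate
\begin{equation*}
\norm{\sup_{\grn\in\N_i}\frac{\abs{S_{\grn}\pr{m}}}{\abs{\grn}^{1/2}\prod_{q=1}^dLL\pr{n_q}^{1/2}}}_p\leq c_{p,d}\norm{\sup_{\grn\in\N_{i-1}}\frac{\abs{S_{\grn}\pr{m}}}{\abs{\grn}^{1/2}\prod_{q=1}^dLL\pr{n_q}^{1/2}}}_p,
\end{equation*}
and then compose the $d$ resulting inequalities (note $\N_{i-1}\subset\N_i$, so this is the non-trivial direction). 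The key structural input is that, with all coordinates except the $i$-th frozen, the map $n_i\mapsto S_{\grn}\pr{m}$ is a martingale with respect to the one-parameter filtration $\pr{\Fca_{\infty\gr{1}_{[d]\setminus\ens{i}}+\ell\gr{e_i}}}_{\ell}$ obtained by letting the remaining coordinates tend to $+\infty$; this follows from $\E{m\mid T_i\Fca_{\gr{0}}}=0$ together with the commutation of the filtration.

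For the resulting one-dimensional reduction I would argue as follows. For $n_i$ with $2^{k}\leq n_i<2^{k+1}$, monotonicity of $LL$ and of $n_i\mapsto n_i^{-1/2}$, together with the uniform bounds $\sqrt{2^{k+1}}/\sqrt{2^{k}}=\sqrt2$ and $\sup_{k\geq0}LL\pr{2^{k+1}}/LL\pr{2^{k}}<+\infty$ (recall $LL\geq1$, so no division issue arises), let me replace $n_i$ by $2^{k}$ in the normalisation at the cost of a constant and bound $\abs{S_{\grn}\pr{m}}$ by the running maximum over the block $0\leq n_i\leq 2^{k}$. This reduces matters, \emph{pointwise} in $\omega$ and in the remaining coordinates, to comparing inside the weighted supremum over the dyadic scale $k$ the running maxima $\max_{0\leq n_i\leq 2^{k}}\abs{S_{\grn}\pr{m}}$ with the endpoint values at $n_i=2^{k}$. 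Since $\abs{S_{\grn}\pr{m}}$ is a submartingale in $n_i$, the passage from endpoint values to running maxima at a single fixed scale is exactly the content of Doob's maximal inequality.

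The step I expect to be the main obstacle is precisely making this comparison \emph{uniform} in the dyadic scale $k$ and, simultaneously, over the remaining $d-1$ coordinates, since Doob's inequality controls only one scale at a time whereas the maximal function involves the supremum over all of them. To handle it I would work at the level of tails: introducing a stopping time in the direction $i$ that records the first scale at which the running-maximum maximal function $X$ exceeds a level $x$, I would establish a weak-type inequality of the form $x\,\PP\ens{X>x}\leq C\,\E{Y\mathbf 1\ens{X\geq x}}$ relating $X$ to the endpoint (dyadic) maximal function $Y$, using the slicewise submartingale property and the commuting structure to keep the outer supremum over the frozen coordinates under control. Invoking Lemma~\ref{lem:Lemma_weak_type_estimate} and then Lemma~\ref{lem:from_weak_type_to_Orlicz} upgrades this to the desired $\el^p$ comparison. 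Composing the $d$ one-step inequalities and absorbing all scale- and dimension-dependent constants into $c_{p,d}$ then yields the claim.
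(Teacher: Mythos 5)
Your proposal follows essentially the same route as the paper: the coordinate-by-coordinate reduction from $\N_i$ to $\N_{i-1}$, the one-parameter (sub)martingale structure with respect to $\Fca_{\infty\gr{1}_{[d]\setminus\ens{i}}+\ell\gr{e_i}}$, the doubling property of the weights, and the first-entry (stopping-time) decomposition yielding a weak-type inequality $x\,\PP\ens{M_i>x}\leq c\,\E{M_{i-1}\mathbf 1\ens{M_i>x}}$ upgraded to $\el^p$ via Lemma~\ref{lem:Lemma_weak_type_estimate} are exactly the content of the paper's Lemma~\ref{lem_Mi_Mi-1} and its iteration. No substantive differences to report.
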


\begin{Lemma}\label{lem_Mi_Mi-1}
Let $\pr{a_{\gr{n}}}_{\gr{n}\in \N^d}$ be a family of positive numbers such that 
$a_{\gr{n}}\leq a_{\gr{n'}}$ if $\gr{n}\imd \gr{n'}$ and 
\begin{equation}\label{eq:definition_de_c}
c:=\sup_{\gr{n}\in\N^d}\max_{1\leq i\leq d}\frac{a_{\gr{n}+n_i \gr{e_i}   }}{a_{\gr{n}    }}<+\infty. 
\end{equation}
 Assume that $\pr{m\circ T^{\gri}}_{\gr{i}\in\Z^d}$ 
is an orthomartingale difference random field with respect to a commuting filtration 
$\pr{T^{-\gri}\f_{\gr{0}}}_{\gr{i}\in\Z^d}$. Let
\begin{equation}
M_i:=\sup_{\gr{n}\in\N_i}\frac{\abs{S_{\gr{n}}\pr{m}  }}{a_{\gr{n}}}.
\end{equation}
Then for any positive real number number $x$ and any $i\in\ens{0,\dots,d}$, 
\begin{equation}\label{eq:lem_estimate_Mi}
\PP\ens{M_i>x}\leq \int_1^{+\infty}\PP\ens{M_{i-1}> \frac{ux}{2c}     }\mathrm du.
\end{equation}

\end{Lemma}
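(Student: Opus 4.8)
The plan is to derive \eqref{eq:lem_estimate_Mi} from the weak-type estimate of Lemma~\ref{lem:Lemma_weak_type_estimate} applied to $X:=M_i$ and $Y:=cM_{i-1}$. Concretely, it suffices to prove the weak-type bound
\begin{equation}\label{eq:weak_type_Mi}
x\,\PP\ens{M_i>x}\leq c\,\E{M_{i-1}\mathbf 1\ens{M_i\geq x}},\qquad x>0 .
\end{equation}
Indeed, once \eqref{eq:weak_type_Mi} holds, Lemma~\ref{lem:Lemma_weak_type_estimate} gives $\PP\ens{M_i>2t}\leq\int_1^{+\infty}\PP\ens{cM_{i-1}>st}\mathrm ds$; setting $x=2t$ and rewriting $\ens{cM_{i-1}>st}=\ens{M_{i-1}>sx/(2c)}$ yields \eqref{eq:lem_estimate_Mi} after renaming $s$ into $u$. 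So the whole task reduces to establishing \eqref{eq:weak_type_Mi}.

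To that end I would write a generic $\grn\in\N_i$ as $\pr{\gru,n_i,\grb}$, where $\gru$ collects the free coordinates $1,\dots,i-1$, $n_i\geq1$ is the free $i$-th coordinate and $\grb$ collects the dyadic coordinates $i+1,\dots,d$; the set $\N_{i-1}$ is obtained by further forcing $n_i$ to be a power of $2$. The key analytic input is that, the coordinates $q\neq i$ being frozen, the orthomartingale hypothesis $\E{m\mid T_q\f_{\gr0}}=0$ makes the partial sums $\ell\mapsto S_{\pr{\gru,\ell,\grb}}\pr m$ a martingale with respect to $\Gca_\ell:=\bigvee_{\grj:\,j_i\leq\ell-1}\f_{\grj}$. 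The key arithmetic input is that for $n_i$ in the dyadic block $2^{k-1}<n_i\leq2^k$, the monotonicity of $\pr{a_{\grn}}$ together with the growth condition \eqref{eq:definition_de_c} gives $a_{\pr{\gru,2^k,\grb}}\leq a_{\pr{\gru,2n_i,\grb}}\leq c\,a_{\grn}$, so that $1/a_{\grn}\leq c/a_{\pr{\gru,2^k,\grb}}$. This is exactly where the constant $c$ enters: inside one dyadic block it lets me replace the true weight by the weight at the right endpoint $2^k$, which no longer depends on $n_i$.

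First I would truncate all coordinates to $\ens{1,\dots,2^N}$, so that every supremum is attained and every martingale is bounded, the general case following by monotone convergence as $N\to+\infty$. Then I introduce the $\Gca$-stopping time $\sigma:=\min\ens{n_i:\ \sup_{\gru,\grb}\abs{S_{\pr{\gru,n_i,\grb}}\pr m}/a_{\pr{\gru,n_i,\grb}}>x}$, so that $\ens{M_i>x}=\ens{\sigma<+\infty}$, and I decompose this event along the block containing $\sigma$, namely $\ens{M_i>x}=\bigsqcup_{k}E_k$ with $E_k:=\ens{2^{k-1}<\sigma\leq2^k}\in\Gca_\sigma$. On $E_k$ the weight comparison turns the defining inequality of $\sigma$ into $\sup_{\gru,\grb}\abs{S_{\pr{\gru,\sigma,\grb}}\pr m}/a_{\pr{\gru,2^k,\grb}}>x/c$; optional sampling applied coordinatewise (licit because $\sigma\leq2^k$ on $E_k$) gives $S_{\pr{\gru,\sigma,\grb}}\pr m=\E{S_{\pr{\gru,2^k,\grb}}\pr m\mid\Gca_\sigma}$, and the conditional Jensen inequality for the convex weighted supremum norm upgrades this to
\begin{equation}
\frac xc<\E{\sup_{\gru,\grb}\frac{\abs{S_{\pr{\gru,2^k,\grb}}\pr m}}{a_{\pr{\gru,2^k,\grb}}}\,\Big|\,\Gca_\sigma}\leq\E{M_{i-1}\mid\Gca_\sigma},
\end{equation}
the last step holding because every point $\pr{\gru,2^k,\grb}$ belongs to $\N_{i-1}$. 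Integrating over $E_k\in\Gca_\sigma$ and summing the disjoint contributions yields $\frac xc\,\PP\ens{M_i>x}=\sum_k\frac xc\PP\pr{E_k}\leq\sum_k\int_{E_k}M_{i-1}\,\mathrm d\PP=\E{M_{i-1}\mathbf 1\ens{M_i>x}}$, which is \eqref{eq:weak_type_Mi}.

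The hard part is the third paragraph. The supremum over the frozen coordinates $\gru,\grb$ sits inside both $M_i$ and the definition of $\sigma$, so the argument has to be run as a stopping-time/optional-sampling argument for a martingale valued in the (finite-dimensional, after truncation) space of fields equipped with the $2^k$-weighted supremum norm; one must check simultaneously that this really is a $\Gca$-martingale --- the only place the orthomartingale structure is genuinely used --- and that the $\omega$-dependent endpoint $2^k$ is correctly absorbed by the block decomposition $\bigsqcup_kE_k$. Once these two points are in place, the monotonicity and growth of $\pr{a_{\grn}}$ contribute only the clean multiplicative constant $c$, and Lemma~\ref{lem:Lemma_weak_type_estimate} finishes the proof.
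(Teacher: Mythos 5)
Your proposal is correct and follows essentially the same route as the paper: the paper also reduces everything to the weak-type bound $x\,\PP\ens{M_i>x}\leq c\,\E{M_{i-1}\mathbf 1\ens{M_i>x}}$, obtained by comparing the weight $a_{\grn}$ to the weight at the dyadic endpoint of the block containing $n_i$ and exploiting the martingale structure in the $i$-th direction with the remaining coordinates frozen inside the supremum. The only cosmetic differences are that the paper implements your optional-sampling/conditional-Jensen step by hand, via first-entrance sets $B_N$, an Abel summation over each dyadic block, and the submartingale inequality $\E{Y'_{N+1}-Y'_N\mid\Gca_N}\geq 0$, and then derives \eqref{eq:lem_estimate_Mi} by directly integrating the tail of $M_{i-1}$ rather than citing Lemma~\ref{lem:Lemma_weak_type_estimate}, whose proof is that very computation.
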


\begin{proof}
Let $0\leq i\leq d-1$. Define the random variables 
\begin{equation}
Y_N:= \frac1{a_{n_1,\dots,n_{i-1},N,n_{i+1},\dots,n_d}} \sup_{n_1,\dots,n_{i-1}}\sup_{n_{i+1},\dots,n_d}
\abs{S_{n_1,\dots,n_{i-1},N,n_{i+1},\dots,n_d}\pr{m}},
\end{equation}
\begin{equation}
Y'_N:=\frac{a_{n_1,\dots,n_{i-1},N,n_{i+1},\dots,n_d}}{a_{n_1,\dots,n_{i-1},2^{n+1},n_{i+1},\dots,n_d}}  Y_N,\quad  2^n+1\leq N\leq 2^{n+1}.
\end{equation}
and the following events 
\begin{equation}
A_N:=\ens{Y_N>x},  B_0=\emptyset,  B_N:=A_N\setminus \bigcup_{i=0}^{N-1}A_i,
\end{equation}
\begin{equation}\label{eq:definition_de_C_Nn}
C_{N,n}:=\begin{cases}
\bigcup_{i=2^n+1}^NB_i, &\mbox{ if }2^n+1\leq N\leq 2^{n+1};\\
\emptyset, &\mbox{ if } N\leq 2^n  \mbox{ or }  N>  2^{n+1}.
\end{cases}
\end{equation}
In this way, the set $\ens{M_i>x}$ can be expressed as the disjoint union 
$\bigcup_{N\geq 1}B_N$. Hence 
\begin{equation}
\PP\ens{M_i>x}\leq \sum_{N\geq 1}\PP\pr{B_N}=
\sum_{n=0}^{+\infty}\sum_{N=2^n+1}^{2^{n+1}}\PP\pr{B_N}.
\end{equation}
Since $x\mathbf{1}\pr{B_N}\leq Y_N \mathbf{1}\pr{B_N}$, we infer that 
\begin{equation}
x\PP\ens{M_i>x}\leq\sum_{n=0}^{+\infty}\sum_{N=2^n+1}^{2^{n+1}}\E{ 
Y_N\mathbf 1\pr{B_N}
}.
\end{equation}
By definition of $c$ in \eqref{eq:definition_de_c}, we get that 
\begin{equation}\label{eq:estimate_tail_of_Mi}
x\PP\ens{M_i>x}\leq c\sum_{n=0}^{+\infty}\sum_{N=2^n+1}^{2^{n+1}}\E{ 
Y'_N\mathbf 1\pr{B_N}
}.
\end{equation}
Let $n\geq 0$ be fixed. Since $\mathbf 1\pr{B_N}=\mathbf 1\pr{C_{N,n}}-
\mathbf 1\pr{C_{N-1,n}}$ for all $n$ such that $2^n+1\leq N\leq 2^{N+1}$, 
\begin{align*}
\sum_{N=2^n+1}^{2^{n+1}}\E{ 
Y'_N\mathbf 1\pr{B_N}
}&=\sum_{N=2^n+1}^{2^{n+1}}\E{ 
Y'_N\pr{\mathbf 1\pr{C_{N,n}}-
\mathbf 1\pr{C_{N-1,n}}}
}\\
&=\E{\sum_{N=2^n+1}^{2^{n+1}} 
Y'_N \mathbf 1\pr{C_{N,n}}-
\sum_{N=2^n}^{2^{n+1}-1}Y'_{N+1} \mathbf 1\pr{C_{N,n}}}
\\
&\leq \E{Y'_{2^{n+1}} \mathbf 1\pr{C_{2^{n+1},n}} }+
\E{\sum_{N=2^n+1}^{2^{n+1}-1} 
\pr{ Y'_N  -Y'_{N+1} }     \mathbf 1\pr{C_{N,n}}}.
\end{align*}

The set $\mathbf 1\pr{C_{N,n}}$ is measurable with respect to the 
$\sigma$-algebra $\mathcal G_N:=\Fca_{\infty_{[d]\setminus \ens{i} }   +N\gr{e_i}
 }$ and by the orthomartingale difference property of $\pr{m\circ T^{\gri}}_{\gr{i}\in\Z^d}$ the random variable
 $\E{Y'_{N+1}-Y'_N\mid \Gca_N }$ is non-negative and consequently, 
 \begin{align}
 \E{\pr{ Y'_N  -Y'_{N+1} }     \mathbf 1\pr{C_{N,n}}}
 &=\E{\E{\pr{ Y'_N  -Y'_{N+1} }     \mathbf 1\pr{C_{N,n}} \mid \Gca_N}}\\
 &=\E{\mathbf 1\pr{C_{N,n}} \E{Y'_N  -  Y'_{N+1}    \mid \Gca_N }}\leq 0,
 \end{align}
 from which it follows that 
 \begin{equation}
 \sum_{N=2^n+1}^{2^{n+1}}\E{ 
Y'_N\mathbf 1\pr{B_N}
}\leq \E{Y'_{2^{n+1}} \mathbf 1\pr{C_{2^{n+1},n}} }.
 \end{equation}
 The last inequality combined with \eqref{eq:estimate_tail_of_Mi} allows to deduce that 
 \begin{equation}\label{eq:estimate_tail_of_Mi_bis}
x\PP\ens{M_i>x}\leq c\sum_{n=0}^{+\infty}\E{Y'_{2^{n+1}} \mathbf 1\pr{C_{2^{n+1},n}} }.
\end{equation}
Observe that for all $n\geq 0$, the random  variable $Y'_{2^{n+1}}$ is bounded by $M_{i-1}$. Combining this 
with the definition of $C_{N,n}$ given by \eqref{eq:definition_de_C_Nn}, we derive that 
\begin{equation}\label{eq:estimate_tail_of_Mi_ter}
x\PP\ens{M_i>x}\leq c\sum_{n=0}^{+\infty}\E{M_{i-1} \mathbf 1\pr{\bigcup_{k=2^n+1}^{2^{n+1}}B_k  } }.
\end{equation}
Since the family $\ens{B_k,k\geq 1}$ is pairwise disjoint, so is the family 
$\ens{\bigcup_{k=2^n+1}^{2^{n+1}}B_k ,n\geq 0}$. Therefore, using again the fact that 
$\ens{M_i>x}$ can be expressed as the disjoint union 
$\bigcup_{N\geq 1}B_N$, we establish the inequality 
\begin{equation}
x\PP\ens{M_i>x}\leq c\E{M_{i-1}\mathbf 1\ens{M_i>x}   }.
\end{equation}
We estimate the right hand side of the last inequality in the 
following way:
\begin{align*}
 \E{M_{i-1}\mathbf 1\ens{M_i>x}   }&=\int_0^{+\infty} 
 \PP\pr{\ens{M_i>x}\cap \ens{M_{i-1}>t}}\mathrm dt\\
 &\leq \int_0^{x/\pr{2c}} 
 \PP\ens{M_i>x}\ \mathrm dt+\int_{x/\pr{2c}}^{+\infty} 
 \PP \ens{M_{i-1}>t}\mathrm dt\\
 &=\frac{x}{2c}\PP\ens{M_i>x}+\frac{x}{2c}
 \int_{1}^{+\infty} 
 \PP \ens{M_{i-1}>\frac{x}{2c}u}\mathrm du,
\end{align*}
from which \eqref{eq:lem_estimate_Mi} follows.
This ends the proof of Lemma \ref{lem_Mi_Mi-1}.
\end{proof}

\subsection{Proof of Theorem \ref{thm:LIL_orthomartingales}}

We will use the following notations. We define the random variables
\begin{equation}\label{eq:dfn_de_Y_n}
Y_{\gr{n}}:=\frac{\abs{S_{\gr{2^n}}\pr{m}}}{\abs{\gr{2^n}}^{1/2}   
\prod_{q=1}^dL\pr{n_i}^{1/2} },
\end{equation}
\begin{equation}\label{eq:dfn_de_Y_ni}
Y_{\gr{n},i}:=\frac{ S_{\gr{2^{n-\pr{n_i-1}e_i}}}\pr{m}  }{\abs{\gr{2^{n-n_ie_i}}}^{1/2}   
\prod_{q=1,q\neq i}^dL\pr{n_q}^{1/2} },
\end{equation}
\begin{equation}\label{eq:dfn_de_Z}
Z_i:=\sup_{\gr{n}\in\N^d}\abs{Y_{\gr{n},i}}.
\end{equation}

\begin{Lemma}\label{lem:lien_dim_d_dim_d-1}
Let $\pr{m\circ T^{\gri},T^{-\gri}\Fca_{\gr{0}}}_{\gr{i}\in\Z^d}$ be a strictly stationary
orthomartingale difference random field. For all integer $N$ and all $x\geq e^{2d+2}$, the 
following inequality holds:
\begin{multline}\label{eq:lien_dim_d_dim_d-1}
\PP\ens{\sup_{\gr{n}\in\N^d} Y_{\gr{n}}>x}\leq 
2^{Nd}\PP\ens{\abs{m}>x2^{-Nd}    }+d N^{d/2}x^{-\ln N}\\
+8\sum_{i=1}^d\int_{1/\sqrt 2}^{+\infty}v\PP\ens{Z_i > \frac{x}{\sqrt{\ln x}}v/2
}\mathrm dv.
\end{multline}
\end{Lemma}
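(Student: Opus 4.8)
The goal is to bound the tail of $\sup_{\grn\in\N^d}Y_{\grn}$ by three contributions: a truncation term of the form $2^{Nd}\PP\ens{\abs m>x2^{-Nd}}$, a deterministic-looking term $dN^{d/2}x^{-\ln N}$, and a sum over directions $i$ of integrals involving the lower-dimensional maximal functions $Z_i$. The structure of the right-hand side strongly suggests a truncation argument combined with an application of the deviation inequality for martingales (Proposition~\ref{prop:inegalite_deviation_martingales}) in one fixed direction, after which the remaining supremum reduces to one of the $Z_i$. First I would split $m$ into its truncated part $m\mathbf 1\ens{\abs m\leq x2^{-Nd}}$ and its tail $m\mathbf 1\ens{\abs m> x2^{-Nd}}$. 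Since each partial sum $S_{\gr{2^n}}(m)$ over a dyadic box with $\max n_q\leq N$ involves at most $2^{Nd}$ terms, the contribution of the tail part on the region $\max_q n_q\le N$ is controlled by $2^{Nd}\PP\ens{\abs m>x2^{-Nd}}$ via a union bound; this accounts for the first term.

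\emph{Main step.} On the truncated field I would fix a distinguished direction, say direction $i$, and use the orthomartingale structure to view $S_{\gr{2^n}}(m)$ as a martingale sum in the $i$-th coordinate, with the other coordinates frozen. The martingale differences are the increments $S_{\gr{2^{n-(n_i-1)e_i}}}(m)$ (these are exactly the quantities appearing in the numerator of $Y_{\grn,i}$ in \eqref{eq:dfn_de_Y_ni}), so that the sum of squares plus conditional variances over the $i$-th direction is naturally bounded in terms of $Z_i$. I would apply Proposition~\ref{prop:inegalite_deviation_martingales} with deviation level $x$ and variance level $y$ chosen to match the normalization $\abs{\gr{2^n}}^{1/2}\prod_q L(n_q)^{1/2}$; the exponential bound $2\exp(-x^2/(2y))$ then gets controlled on the event where the relevant sum of squares is at most $y$, with the complementary event handled by $Z_i$. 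Summing the exponential bound over all dyadic boxes with coordinates up to $N$ (there are of order $N^d$ such boxes, and the $L(n_q)^{1/2}$ factors supply the logarithmic gain) is what produces the term $dN^{d/2}x^{-\ln N}$ after optimizing; the factor $\sqrt{\ln x}$ in the rescaling of $Z_i$ reflects converting the $LL$ normalization back to the $L$ normalization used in $Y_{\grn,i}$.

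\emph{Assembling the integral term.} To pass from a tail statement about the event $\ens{\text{sum of squares}>y}$ to the integral $\int_{1/\sqrt 2}^{+\infty}v\,\PP\ens{Z_i>\frac{x}{\sqrt{\ln x}}v/2}\,\mathrm dv$, I would use a layer-cake / change-of-variables argument: parametrize $y$ by $v$ through $y\sim (x/\sqrt{\ln x})^2 v^2$, so that $\mathrm dy$ contributes the factor $v\,\mathrm dv$, and bound $\PP\ens{\text{sum of squares over direction }i>y}$ by $\PP\ens{Z_i>\frac{x}{\sqrt{\ln x}}v/2}$ since the sum of squares in direction $i$ is dominated by $Z_i^2$ up to the normalization. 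The factors $8$ and the lower limit $1/\sqrt 2$ are artifacts of this rescaling and of a dyadic-summation bookkeeping. Summing over the $d$ choices of distinguished direction gives the sum $\sum_{i=1}^d$.

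\emph{Main obstacle.} The delicate point is the simultaneous handling of the supremum over \emph{all} of $\N^d$ rather than a single box: the deviation inequality is one-dimensional, so I must reduce the multidimensional supremum to a supremum in one direction while freezing the others, and the frozen supremum must be absorbed into $Z_i$ without losing the normalization. Getting the constant $x\geq e^{2d+2}$ threshold right, and verifying that the number of dyadic boxes (of order $N^d$) together with the logarithmic normalization yields exactly $N^{d/2}x^{-\ln N}$ rather than a weaker bound, is where the careful optimization of the variance parameter $y$ in Proposition~\ref{prop:inegalite_deviation_martingales} against the number of summands must be done precisely. I expect this balancing of the exponential deviation bound against the combinatorial count of boxes to be the technical heart of the argument.
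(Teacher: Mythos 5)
Your overall architecture matches the paper's --- dispose of the small boxes crudely, apply the one\--dimensional deviation inequality of Proposition~\ref{prop:inegalite_deviation_martingales} in a distinguished direction $i$ on the event where the squares plus conditional variances are controlled, and absorb the complementary event into $Z_i$ --- but two of your steps would not go through as written. The most serious gap is the third term. The event to control is $\bigcup_{\grn\in\N^d}B_{\grn,i}^c$, where $B_{\grn,i}^c$ is the event that the \emph{average} $\frac1{2^{n_i}}\sum_{j=1}^{2^{n_i}}U^{j\gr{e_i}}\bigl(Y_{\grn,i}^2+\E{Y_{\grn,i}^2\mid\cdot}\bigr)$ exceeds $x^2/\ln x$. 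Each translate $U^{j\gr{e_i}}Y_{\grn,i}^2$ is bounded by $U^{j\gr{e_i}}Z_i^2$, which is a \emph{different} random variable from $Z_i^2$, so your claim that ``the sum of squares in direction $i$ is dominated by $Z_i^2$'' is false, and a union bound over $n_i$ of single\--average Markov estimates diverges. What is actually needed is the weak-type maximal ergodic inequality (Lemma~6.1 in \cite{MR797411}),
\begin{equation*}
\PP\ens{\sup_{N\geq 1}\frac 1N\abs{\sum_{j=1}^NQ^jf}>y}\leq \int_{1/2}^{+\infty}\PP\ens{\abs f>yu}\,\mathrm du,
\end{equation*}
applied to $f=Z_i^2$ and to the Markov operator averaging $U^{\gr{e_i}}$ with the conditional expectation; the substitution $v=\sqrt u$ is what produces $8\int_{1/\sqrt 2}^{+\infty}v\,\PP\ens{Z_i>\frac{x}{\sqrt{\ln x}}v/2}\mathrm dv$. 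Your ``layer\--cake / change of variables'' heuristic does not supply this maximal inequality, which is the key tool for this term.

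The bookkeeping of the first two terms is also misaligned. The paper does not truncate $m$: for $\gr 1\imd\grn\imd N\gr 1$ it simply observes that $A_{\grn}$ forces at least one of the at most $2^{Nd}$ variables $\abs{m\circ T^{\gri}}$, $\gr 1\imd\gri\imd 2^N\gr 1$, to exceed $x2^{-Nd}$, which by stationarity disposes of the entire small\--box region and yields the first term; truncating $m$ would in any case destroy the orthomartingale difference property needed to apply Proposition~\ref{prop:inegalite_deviation_martingales} without recentering. The deviation inequality is then applied only for $\grn\in J_N$ (at least one coordinate larger than $N$), giving $\PP\pr{A_{\grn}\cap B_{\grn}}\leq 2\pr{\max_{1\leq i\leq d}n_i}^{-\ln x/2}$, and the factor $N^{d/2}x^{-\ln N}$ comes from summing roughly $d\ell^{d-1}$ boxes per level $\ell=\max_in_i$ over $\ell\geq N$. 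Summing the exponential bound ``over all dyadic boxes with coordinates up to $N$'', as you propose, cannot produce a bound that decays in $N$, and that decay is exactly what lets $N$ be sent to infinity later in the proof of Theorem~\ref{thm:LIL_orthomartingales}.
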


\begin{proof}
Define the events 
\begin{equation}
A_{\gr{n}}:=\ens{Y_{\gr{n}}> x},
\end{equation}
\begin{equation}
B_{\gr{n},i}:=\ens{ \frac1{2^{n_i}}\sum_{j=1}^{2^{n_i}}U^{j\gr{e_i}}
\pr{Y_{\gr{n,i}}^2
+\E{Y_{\gr{n,i}}^2 \mid \Fca_{\infty\gr{1_{[d]\setminus \ens{i}   }}- \gr{e_i}  } } }
 \leq \frac{x^2}{\ln x}   }, B_{\gr{n}}:=\bigcap_{i=1}^dB_{\gr{n},i}.
\end{equation}
 Denoting by $J_N$ the set of elements of $\N^d$ such that 
at least one coordinate is bigger than $N+1$, we have  
\begin{equation}\label{eq:estimation_de_la_prb_union_An}
\PP\ens{\bigcup_{\gr{n}\in \N^d}A_{\gr{n}}}
\leq \PP\pr{\bigcup_{\gr{1}\imd \gr{n}\imd N\gr{1}}A_{\gr{n}} }
+\sum_{\gr{n}\in J_N}\PP\pr{ A_{\gr{n}}\cap B_{\gr{n}}}+
\sum_{i=1}^d\PP\pr{\bigcup_{\gr{n}\in \N^d} B_{\gr{n},i} ^c    }.
\end{equation} 
Observe that for $\gr{1}\imd \gr{n}\imd N\gr{1}$, the
following inclusions hold
\begin{align}
A_{\gr{n}}&\subset \ens{ \sum_{\gr{1}\imd\gr{i}\imd\gr{2^n}  } \abs{ 
m\circ T^{\gri}
 }>x \abs{\gr{2^n}}^{1/2}   }\\
 &\subset \ens{ \sum_{\gr{1}\imd\gr{i}\imd 2^N\gr{1}  } \abs{ 
m\circ T^{\gri} 
 }>x    }\\
 &\subset \bigcup_{\gr{1}\imd\gr{i}\imd 2^N\gr{1}}
 \ens{  \abs{ 
m\circ T^{\gri} 
 }>x2^{-Nd}    }.
\end{align}
  Hence 
 \begin{equation}\label{eq:estimation_somme_PAn}
 \PP\pr{\bigcup_{\gr{1}\imd \gr{n}\imd N\gr{1}}A_{\gr{n}} }
 \leq 2^{Nd}\PP\ens{\abs{m}>x2^{-Nd}    }.
\end{equation}  
Let us control $\PP\pr{ A_{\gr{n}}\cap B_{\gr{n}}}$. First observe that 
\begin{equation}\label{eq:estimate_PAn_inter_B_n}
\PP\pr{ A_{\gr{n}}\cap B_{\gr{n}}}\leq 
\min_{1\leq i\leq d}\PP\pr{ A_{\gr{n}}\cap B_{\gr{n},i}}.
\end{equation}
Then, in order to control $\PP\pr{ A_{\gr{n}}\cap B_{\gr{n},i}}$ for a fixed 
$i\in [d]$, we apply Proposition~\ref{prop:inegalite_deviation_martingales} in 
following setting:
\begin{equation}
d_j:=U^{j\gr{e_i}}Y_{\gr{n},i}, \widetilde{\Fca_j}:=
\Fca_{\infty\gr{1_{[d]\setminus \ens{i}   }}+\pr{j-1} \gr{e_i}  }, 
\widetilde{x}:= x2^{n_i/2}L\pr{n_i}^{1/2},
\widetilde{y}:=2^{n_i}x^2/\ln x .
\end{equation}
This leads to the following estimate  
\begin{equation}
\PP\pr{ A_{\gr{n}}\cap B_{\gr{n},i}}\leq 2\exp\pr{-\frac 12L\pr{n_i}\ln x}
\end{equation}
and plugging this into \eqref{eq:estimate_PAn_inter_B_n} gives 
\begin{equation}\label{eq:estimate_PAn_inter_B_n_2}
\PP\pr{ A_{\gr{n}}\cap B_{\gr{n}}}\leq 2
\pr{\max_{1\leq i\leq d} n_i}^{-\frac{\ln x}{2}}.
\end{equation}
For a fixed positive integer $\ell$, the number of elements of $\N^d$ such that 
$\max_{1\leq i\leq d} n_i=\ell$ does not exceed $d\ell^{d-1}$. Hence 
\begin{equation}
\sum_{\gr{n}\in J_N}\PP\pr{ A_{\gr{n}}\cap B_{\gr{n}}}
\leq \sum_{\ell\geq N}dm^{d-1}\ell^{-\frac{\ln x}{2}}
\end{equation}
and the latter sum can be estimated by 
\begin{equation}
d\frac{1}{\frac{\ln x}{2}-d}N^{d-\frac{\ln x}{2}}=d\frac{1}{\frac{\ln x}{2}-d}
N^{d/2}x^{-\ln N}.
\end{equation}
Hence 
\begin{equation}
\sum_{\gr{n}\in J_N}\PP\pr{ A_{\gr{n}}\cap B_{\gr{n}}}
\leq d\frac{1}{\frac{\ln x}{2}-d}
N^{d/2}x^{-\ln N}.
\end{equation}
Since $x\geq e^{2d+2}$, we deduce the estimate 
\begin{equation}\label{eq:estimate_somme_PAn_inter_B_n}
\sum_{\gr{n}\in J_N}\PP\pr{ A_{\gr{n}}\cap B_{\gr{n}}}
\leq d N^{d/2}x^{-\ln N}.
\end{equation}

In order to bound the third term of the right hand side in 
\eqref{eq:estimation_de_la_prb_union_An}, we need the following inequality, 
valid for any map $Q\colon \mathbb L^{\infty}\to \mathbb L^{\infty}$ such that 
$\norm{Qf}_1\leq \norm{f}_1$ and $\norm{Qf}_\infty\leq \norm{f}_\infty$ for all $f\in \mathbb L^\infty$:
\begin{equation}
\PP\ens{\sup_{N\geq 1}\frac 1N\abs{\sum_{j=1}^NQ^jf  }>y}\leq \int_{1/2}^{+\infty}\PP\ens{ \abs f>yu}\mathrm{d}u
\end{equation}
(see Lemma~6.1 in \cite{MR797411}).
We 
apply it for any 
$i\in [d]$ to the following setting:
$f=Y_i^2$, $Q\colon g\mapsto \pr{U^{\gr{e_i}}g+U^{\gr{e_i}}\E{g\mid\Fca_{\infty\gr{1_{[d]\setminus \ens{i}   }}- \gr{e_i}  }     }}/2$,
and $y:=\frac{x^2}{\ln x}$.
 
This leads to 
\begin{equation}
\PP\pr{\bigcup_{\gr{n}\in \N^d} B_{\gr{n},i} ^c    }
\leq 4\int_{1/2}^{+\infty}\PP\ens{Z_i^2> \frac{x^2}{\ln x}u/4}\mathrm du,
\end{equation}
and after the substitution $v=\sqrt u$, the latter term becomes 
\begin{equation}
8\int_{1/\sqrt 2}^{+\infty}v\PP\ens{Z_i > \frac{x}{\sqrt{\ln x}}v/2}\mathrm dv.
\end{equation}
Hence 
\begin{equation}\label{eq:estimation_prob_Bni}
\sum_{i=1}^d\PP\pr{\bigcup_{\gr{n}\in \N^d} B_{\gr{n},i} ^c    }
\leq 8\sum_{i=1}^d\int_{1/\sqrt 2}^{+\infty}v\PP\ens{Z_i > \frac{x}{\sqrt{\ln x}}v/2
}\mathrm dv.
\end{equation}
We end the proof of Lemma~\ref{lem:lien_dim_d_dim_d-1} by combining 
\eqref{eq:estimation_de_la_prb_union_An}, \eqref{eq:estimation_somme_PAn}, 
\eqref{eq:estimate_somme_PAn_inter_B_n} and 
\eqref{eq:estimation_prob_Bni}. 
\end{proof}

\begin{Lemma}\label{lem:recurrence_moments_p}
Let  
$Z:=\sup_{\gr{n}\in \N_0}Y_{\gr{n}}$, where 
$Y_{\gr{n}}$ is defined by \eqref{eq:dfn_de_Y_n}
 and $Z_i$ given by \eqref{eq:dfn_de_Z}.
 Let $1\leq p<2$. There 
exists a constant $C_{p,d}$ depending only on $p$ and $d$ such that 
for all strictly stationary
orthomartingale difference random field $\pr{m\circ T^{\gri},T^{-\gri}\Fca_{\gr{0}}}_{\gr{i}\in\Z^d}$,
the following inequality holds:
\begin{equation}\label{eq:rec_control_norm_pr}
\norm{Z}_{p}\leq C_{p,d}\max_{1\leq i\leq d}\norm{Z_i}_{2}.
\end{equation}
\end{Lemma}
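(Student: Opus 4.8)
The plan is to estimate $\norm{Z}_p$ through the layer-cake identity $\norm{Z}_p^p=p\int_0^{+\infty}x^{p-1}\PP\ens{Z>x}\,\mathrm dx$ and to feed into it the tail bound \eqref{eq:lien_dim_d_dim_d-1} of Lemma~\ref{lem:lien_dim_d_dim_d-1}, valid for $x\geq e^{2d+2}$ (its statement bounds the supremum over $\N^d$, which a fortiori dominates $\PP\ens{Z>x}$ since $\N_0\subset\N^d$). Both $Z$ and each $Z_i$ are positively homogeneous of degree one in $m$, because $S_{\gr{2^n}}\pr{m}$ depends linearly on $m$; I would therefore reduce to the normalized case $\max_{1\leq i\leq d}\norm{Z_i}_2=1$, the general inequality following by rescaling $m$ (if the maximum is infinite there is nothing to prove, and if it is zero then $m=0$ almost surely by the comparison below). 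This normalization is essential rather than cosmetic: the right-hand side of \eqref{eq:lien_dim_d_dim_d-1} is not homogeneous in $m$, so its $m$-free pieces can only be absorbed into $C_{p,d}$ once the scale is pinned down.

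Before splitting the integral I would record the comparison $\norm{m}_2\leq 2^{-1/2}\norm{Z_i}_2$ for each $i$. Evaluating \eqref{eq:dfn_de_Y_ni} at $\gr{n}=\gr{1}$ gives $Y_{\gr{1},i}=2^{-(d-1)/2}S_{\gr{2}}\pr{m}$, and since the $2^d$ translates $\ens{U^{\gri}m:\gri\in\ens{0,1}^d}$ making up $S_{\gr{2}}\pr{m}$ are pairwise orthogonal in $\el^2$ (a standard consequence of the orthomartingale difference property under a commuting filtration), one finds $\E{Y_{\gr{1},i}^2}=2\E{m^2}$. As $Z_i\geq\abs{Y_{\gr{1},i}}$, this yields $\norm{m}_2\leq 2^{-1/2}\norm{Z_i}_2$, hence $\norm{m}_p\leq\norm{m}_2\leq 2^{-1/2}\max_i\norm{Z_i}_2$ for $1\leq p<2$.

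I would then fix an integer $N_0$ with $\ln N_0>2>p$, say $N_0=8$, apply \eqref{eq:lien_dim_d_dim_d-1} with $N=N_0$, and estimate the three resulting terms after integration against $px^{p-1}$ over $\ens{x\geq e^{2d+2}}$ (the range $x<e^{2d+2}$ contributing at most the constant $e^{(2d+2)p}$ via $\PP\ens{Z>x}\leq 1$). The first term, after the substitution $u=x2^{-N_0d}$, is a constant multiple of $\E{\abs m^p}=\norm{m}_p^p$, hence $\lesssim_{d}(\max_i\norm{Z_i}_2)^p$ by the previous paragraph. The second term equals $dN_0^{d/2}\int_{e^{2d+2}}^{+\infty}x^{p-1-\ln N_0}\,\mathrm dx$, which is a finite, $m$-free constant because $\ln N_0>p$. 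For the third term I would use that, writing $c=x/(2\sqrt{\ln x})$, the change of variables $w=cv$ gives $\int_{1/\sqrt2}^{+\infty}v\PP\ens{Z_i>cv}\,\mathrm dv\leq\frac{1}{2c^2}\E{Z_i^2}=\frac{2\ln x}{x^2}\norm{Z_i}_2^2$; integrating against $x^{p-1}$ then bounds this term by a constant times $\sum_i\norm{Z_i}_2^2\int_{e^{2d+2}}^{+\infty}x^{p-3}\ln x\,\mathrm dx$, and the last integral converges since $p-3<-1$.

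Collecting the four contributions under the normalization $\max_i\norm{Z_i}_2=1$ gives $\norm{Z}_p^p\leq C_{p,d}$, and undoing the rescaling produces $\norm{Z}_p\leq C_{p,d}\max_i\norm{Z_i}_2$. The delicate point I expect is exactly the homogeneity bookkeeping: integrating \eqref{eq:lien_dim_d_dim_d-1} literally produces genuinely $m$-independent constants (from the small-$x$ range and from the second term), so the argument can only close after normalizing the scale, and one must simultaneously verify that the two remaining terms carry the correct degree in $m$ — degree $p$ for the first, through $\norm{m}_p\lesssim\max_i\norm{Z_i}_2$, and degree two for the third, which is harmless once the scale is fixed. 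Choosing a constant $N=N_0$ (rather than letting $N$ grow with $x$) is what keeps the first term proportional to an honest moment of $m$ while still rendering the second term integrable.
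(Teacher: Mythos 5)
Your proof is correct, but it takes a genuinely different route from the paper's. The paper does not invoke Lemma~\ref{lem:lien_dim_d_dim_d-1} here; it re-runs the same splitting with $p$-adapted choices: the truncation events $B_{\grn,i}$ are defined with threshold $x^p$ instead of $x^2/\ln x$, the block $\gr{1}\imd\grn\imd N\gr{1}$ is handled by Chebyshev's and Doob's inequalities (giving $2^dN^dx^{-2}$ rather than a union bound on $\abs{m}$), the complements $B_{\grn,i}^c$ by the maximal ergodic theorem (giving $x^{-p}\sum_i\E{Z_i^2}$), and crucially $N$ is chosen to depend on $x$, namely $N=\left[x^{(2-p)/d}\right]$, so that all three pieces become $O(x^{-p})$; this yields a weak-$\el^p$ bound $\norm{Z}_{p,w}\leq c_{p,d}$ after normalization, which still has to be upgraded to the strong norm (implicitly, by running the argument at an exponent $p'\in(p,2)$). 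You instead integrate the finished tail estimate of Lemma~\ref{lem:lien_dim_d_dim_d-1} against $px^{p-1}\,\mathrm dx$ with a fixed $N$, which lands directly on the strong $\el^p$ norm, covers $p=1$ without any interpolation step, and your explicit comparison $\norm{m}_2\leq 2^{-1/2}\norm{Z_i}_2$ (needed to absorb $\E{\abs{m}^p}$ into $\max_i\norm{Z_i}_2$) makes precise a normalization that the paper leaves implicit. The price is reliance on the sharper Lemma~\ref{lem:lien_dim_d_dim_d-1}, but since the paper proves that lemma anyway for the $\el_{2,r}$ estimate, nothing is lost; your individual computations (the substitution in the first term, the convergence of $\int x^{p-1-\ln N_0}\,\mathrm dx$ and of $\int x^{p-3}\ln x\,\mathrm dx$, and the bound $\int_{1/\sqrt 2}^{+\infty}v\,\PP\ens{Z_i>cv}\mathrm dv\leq\E{Z_i^2}/(2c^2)$) all check out.
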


\begin{proof}
The proof follows the same lines as that of Lemma~\ref{lem:lien_dim_d_dim_d-1}. 
Let $x$ be such that $x^p/2-d\geq pd/\pr{2-p}$. Define the events 
\begin{equation}
A_{\gr{n}}:=\ens{Y_{\gr{n}}> x},
\end{equation}
\begin{equation}
B_{\gr{n},i}:=\ens{ \frac1{2^{n_i}}\sum_{j=1}^{2^{n_i}}U^{j\gr{e_i}}
\pr{Y_{\gr{n,i}}^2
+\E{Y_{\gr{n,i}}^2 \mid \Fca_{\infty\gr{1_{[d]\setminus \ens{i}   }}- \gr{e_i}  } } }
 \leq  x^{p}   }, B_{\gr{n}}:=\bigcap_{i=1}^dB_{\gr{n},i}.
\end{equation}
 Denoting by $J_N$ the set of elements of $\N^d$ such that 
at least one coordinate is bigger than $N+1$, we have  
\begin{equation}\label{eq:estimation_de_la_prb_union_An_bis}
\PP\ens{\bigcup_{\gr{n}\in \N^d}A_{\gr{n}}}
\leq \sum_{\gr{1}\imd \gr{n}\imd N\gr{1}}\PP\pr{A_{\gr{n}} }
+\sum_{\gr{n}\in J_N}\min_{1\leq i\leq d}\PP\pr{ A_{\gr{n}}\cap B_{\gr{n},i}}+
\sum_{i=1}^d\PP\pr{\bigcup_{\gr{n}\in \N^d} B_{\gr{n},i} ^c    }.
\end{equation} 
For $\gr{1}\imd \gr{n}\imd N\gr{1}$, we control $\PP\pr{A_{\gr{n}} }$  
by using Chebyshev's and Doob's inequality in order to get 
\begin{equation}
\sum_{\gr{1}\imd \gr{n}\imd N\gr{1}}\PP\pr{A_{\gr{n}} }\leq 2^{d}
N^d x^{-2}.
\end{equation}
By similar arguments as in the proof of Lemma~\ref{lem:lien_dim_d_dim_d-1}, 
we obtain that 
\begin{equation}
\sum_{\gr{n}\in J_N}\min_{1\leq i\leq d}\PP\pr{ A_{\gr{n}}\cap B_{\gr{n},i}}
\leq \sum_{\ell\geq N+1}d\ell^{d-1}\ell^{-x^{2-q}/2 }\leq d\frac{1}{x^{p}/2-d}
N^{d-x^{p}/2}.
\end{equation}
Using inequality $x^p/2-d\geq pd/\pr{2-p}$ yields 
\begin{equation}
\sum_{\gr{n}\in J_N}\min_{1\leq i\leq d}\PP\pr{ A_{\gr{n}}\cap B_{\gr{n},i}}
\leq d\frac{2-p}pN^{-\frac{p}{2-p}}.
\end{equation}
Moreover, an application of the maximal ergodic theorem gives that 
\begin{equation}
\sum_{i=1}^d\PP\pr{\bigcup_{\gr{n}\in \N^d} B_{\gr{n},i} ^c    }\leq 
x^{-p}\sum_{i=1}^d\E{Z_i^2}.
\end{equation}
The combination of the previous three estimates gives that for all positive $x$
\begin{equation}
\PP\ens{Z>x}\leq c_{p,d}\min\ens{1, N^d x^{-2}+ 
N^{pd/\pr{2-p}}+ x^{2-q}\sum_{i=1}^d\E{Z_i^2} }.
\end{equation} 
We choose $N:=\left[x^{\frac{2-p}d}\right]$; in this way  
\begin{equation}
\PP\ens{Z>x}\leq   c_{p,d}\min\ens{1,  x^{-p}+ 
 x^{-p}\sum_{i=1}^d\E{Z_i^2} }.
\end{equation} 
If $\sum_{i=1}^d\E{Z_i^2} \leq 1$, this gives that $\norm{Z}_{p,w}
\leq  c_{p,d}$ and we replace $m$ by $m/ 
\sqrt{\sum_{i=1}^d\E{Z_i^2} }$ to get the general case. 

This ends the proof of Lemma~\ref{lem:recurrence_moments_p}.
\end{proof}

\begin{Lemma}\label{lem:recurrence_moments_2}
Let  
$Z:=\sup_{\gr{n}\in \N_0}Y_{\gr{n}}$, where 
$Y_{\gr{n}}$ is defined by \eqref{eq:dfn_de_Y_n}
 and $Z_i$ given by \eqref{eq:dfn_de_Z}. There exists a constant $C_{r,d}$ depending only on $r$ and $d$ such that 
for all strictly stationary
orthomartingale difference random field $\pr{m\circ T^{\gri},T^{-\gri}\Fca_{\gr{0}}}_{\gr{i}\in\Z^d}$,
the following inequality holds:
\begin{equation}\label{eq:rec_control_norm_2r_par_norm2r+2}
\norm{Z}_{2,r}\leq C_{r,d}\max_{1\leq i\leq d}\norm{Z_i}_{2,r+2}.
\end{equation}
\end{Lemma}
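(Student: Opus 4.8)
The plan is to follow the proof of Lemma~\ref{lem:recurrence_moments_p}, but since the norm $\norm{\cdot}_{2,r}$ is sensitive to logarithmic factors I will not merely read off a power of $x$ from a tail estimate; instead I integrate the tail bound of Lemma~\ref{lem:lien_dim_d_dim_d-1} against the weight defining $\norm{\cdot}_{2,r}$. Concretely, I evaluate the estimate of Lemma~\ref{lem:lien_dim_d_dim_d-1} for $\PP\ens{Z>x}$ at the dyadic points $x=2^{k/2}$, where $\ln x\asymp k$ and $x/\sqrt{\ln x}\asymp 2^{k/2}/\sqrt k$, and I fix the free integer to a constant $N=N_0$ with $\ln N_0>2$ (say $N_0=8$); the indices $k$ with $2^{k/2}<e^{2d+2}$ are discarded using $\PP\ens{Z>x}\leq 1$, at the cost of an additive constant. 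The elementary ring estimate $\E{Z^2\pr{\log Z}^r\mathbf 1\ens{Z>e}}\leq C_r\sum_{k\geq 1}2^k k^r\PP\ens{Z>2^{k/2}}$, obtained by splitting on the sets $\ens{2^{k/2}<Z\leq 2^{\pr{k+1}/2}}$, reduces everything to bounding the series $\sum_k 2^k k^r\PP\ens{Z>2^{k/2}}$: once this is bounded by a constant one gets $\E{\varphi_{2,r}\pr{Z}}\leq C_{r,d}$ and hence $\norm{Z}_{2,r}\leq C_{r,d}$, and the general inequality \eqref{eq:rec_control_norm_2r_par_norm2r+2} follows from the homogeneity of the Luxemburg norm under the rescaling $m\mapsto m/\lambda$, which divides $Z$ and every $Z_i$ by $\lambda$.

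I then treat the three terms of \eqref{eq:lien_dim_d_dim_d-1} separately. The third term is the main one and is responsible for the gain of two logarithms. After substituting $x=2^{k/2}$, the integral $\int_{1/\sqrt 2}^{+\infty}v\PP\ens{Z_i>\frac{x}{\sqrt{\ln x}}v/2}\mathrm dv$ equals $\frac{4\ln x}{x^2}\int_{s_0}^{+\infty}s\PP\ens{Z_i>s}\mathrm ds$ with $s_0\asymp x/\sqrt{\ln x}\asymp 2^{k/2}/\sqrt k$, hence is at most $Ck2^{-k}\E{Z_i^2\mathbf 1\ens{Z_i>c2^{k/2}/\sqrt k}}$. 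Multiplying by $2^kk^r$ and summing gives $C\sum_k k^{r+1}\E{Z_i^2\mathbf 1\ens{Z_i>c2^{k/2}/\sqrt k}}$; exchanging sum and expectation, the constraint $2^{k/2}/\sqrt k<Z_i/c$ forces $k\leq 2\log_2 Z_i+O\pr{\log k}$, so $\sum_{k\leq K}k^{r+1}\asymp K^{r+2}\asymp\pr{\log Z_i}^{r+2}$. This bounds the third term by $C_{r,d}\E{Z_i^2\pr{\log Z_i}^{r+2}\mathbf 1\ens{Z_i>1}}+C_{r,d}\asymp C_{r,d}\norm{Z_i}_{2,r+2}^2+C_{r,d}$: one logarithm is carried by the $\sqrt{\ln x}$-rescaling built into the events $B_{\gr{n},i}$ and the second is produced by summing $k^{r+1}$ up to $k\asymp\log Z_i$. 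This bookkeeping of the two logarithms is the heart of the argument and the step I expect to be the most delicate.

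The second term $dN_0^{d/2}x^{-\ln N_0}$ contributes, after summation, the geometric series $\sum_k k^r 2^{k\pr{1-\ln N_0/2}}$, which converges to an absolute constant precisely because $\ln N_0>2$. The first term $2^{N_0 d}\PP\ens{\abs m>x2^{-N_0 d}}$ only involves $m$; summing it against $2^kk^r$ and invoking \eqref{eq:somme_2puisk_kpuisq} of Lemma~\ref{lem:control_somme_probas_par_Lpq} bounds it by $C_{r,d}\E{m^2\pr{\log\abs m}^r\mathbf 1\ens{\abs m>1}}\asymp C_{r,d}\norm{m}_{2,r}^2$, so it remains to dominate $\norm{m}_{2,r}$ by $\max_i\norm{Z_i}_{2,r+2}$ — a point absent from the weak-$\el^p$ proof of Lemma~\ref{lem:recurrence_moments_p}, where only the correct power of $x$ mattered. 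Here the orthomartingale structure is essential: since $\E{m\circ T^{\gri}\mid\f_{\gr{0}}}=0$ for every $\gri\smd\gr{0}$ with $\gri\neq\gr{0}$, only the term $\gri=\gr{0}$ survives in $\E{S_{\gr{2}}\pr{m}\mid\f_{\gr{0}}}$, whence $m=\E{S_{\gr{2}}\pr{m}\mid\f_{\gr{0}}}$ with $S_{\gr{2}}\pr{m}=\sum_{\gr{0}\imd\gri\imd\gr{1}}U^{\gri}\pr{m}$. This block sum is, up to the dimensional factor $2^{\pr{d-1}/2}$, the numerator of $Y_{\gr{n},i}$ at its smallest admissible index, so $\abs{S_{\gr{2}}\pr{m}}\leq 2^{\pr{d-1}/2}Z_i$ pointwise; as conditional expectation is a contraction for every Luxemburg norm, $\norm{m}_{2,r}\leq\norm{S_{\gr{2}}\pr{m}}_{2,r}\leq 2^{\pr{d-1}/2}\norm{Z_i}_{2,r}\leq 2^{\pr{d-1}/2}\norm{Z_i}_{2,r+2}$. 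Collecting the three estimates gives $\sum_k 2^k k^r\PP\ens{Z>2^{k/2}}\leq C_{r,d}\pr{\norm{m}_{2,r}^2+\sum_{i=1}^d\norm{Z_i}_{2,r+2}^2}+C_{r,d}$; normalising $m$ so that $\max_i\norm{Z_i}_{2,r+2}=1$ (which also forces $\norm{m}_{2,r}\leq1$) bounds the series by a constant, whence $\norm{Z}_{2,r}\leq C_{r,d}$, and undoing the normalisation by homogeneity yields \eqref{eq:rec_control_norm_2r_par_norm2r+2}.
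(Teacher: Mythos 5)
Your proof is correct and follows essentially the same route as the paper: apply the tail estimate of Lemma~\ref{lem:lien_dim_d_dim_d-1} with a fixed $N$ satisfying $\ln N>2$, integrate against the $\varphi_{2,r}$-weight via a dyadic decomposition, treat the three terms separately (the $m$-term, the geometric term, and the main term producing $\pr{\log Z_i}^{r+2}$), and conclude by normalisation. The only differences are cosmetic: you perform the $v$-integration before the $k$-summation in the main term whereas the paper applies Lemma~\ref{lem:control_somme_probas_par_Lpq} first and integrates in $v$ afterwards (both orders yield the gain of exactly two logarithms), and you supply an explicit justification of the domination $\norm{m}_{2,r}\leq c_d\norm{Z_i}_{2,r+2}$ via $m=\E{S_{\gr{2}}\pr{m}\mid\Fca_{\gr{0}}}$ and the contraction property of conditional expectation for Luxemburg norms, a step the paper asserts without proof.
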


\begin{proof}
In this proof, $c$ will denote a constant depending only on $r$ and $d$ and 
which may change from line to line.
We start from the equality 
\begin{equation}
\E{\varphi_{2,r}\pr{Z}   }=\int_0^{+\infty}\varphi'_{2,r}\pr{x}\PP\ens{Z>x}\mathrm dx.
\end{equation}
Since $0\leq \varphi'_{2,r}\pr{x}\leq c x\pr{
\log\pr{1+x}}^r$ and $\varphi'_{2,r}\pr{x}\leq 
c $ for $x\leq e^{2\pr{d+1}}$, we derive that 
\begin{equation}
\E{\varphi_{2,r}\pr{Z}   }\leq c +c 
\int_{e^{2\pr{d+1}}}^{+\infty}x\pr{
\log\pr{1+x}}^r\PP\ens{Z>x}\mathrm dx.
\end{equation}
An application of Lemma~\ref{lem:lien_dim_d_dim_d-1} with $N$ such that 
$\ln N>2$ (for example $N=10$) gives that for some constant $c $, 
\begin{multline}\label{eq:recurrence_moments_etape_intermediaire}
\E{\varphi_{2,r}\pr{Z}   }\leq c \left(1
+\int_{e^{2\pr{d+1}}}^{+\infty}x \pr{
\log\pr{1+x}}^r\PP\ens{\abs{m}>x}\mathrm dx 
  \right.\\\left. +\int_{e^{2\pr{d+1}}}^{+\infty}x^{1-\ln 10}\pr{
\log\pr{1+x}}^r\mathrm dx\right.\\
 \left.+\sum_{i=1}^d\int_{e^{2\pr{d+1}}}^{+\infty}x \pr{
\log\pr{1+x}}^r\int_{1/\sqrt 2}^{+\infty}v\PP\ens{
Z_i>\frac{x}{\sqrt{\ln x}}\frac v2
}\mathrm dv     \mathrm dx
\right).
\end{multline}
The second term does not exceed $\E{\varphi_{2,r}\pr{\abs{m}}}$.
The third term of \eqref{eq:recurrence_moments_etape_intermediaire} 
is a constant depending on $p$ and $r$. Therefore, 
\begin{multline}\label{eq:recurrence_moments_etape_intermediaire_2}
\E{\varphi_{2,r}\pr{Z}   }\leq c \left(1+\E{\varphi_{2,r}\pr{\abs{m}}}\right.\\
\left.+
\sum_{i=1}^d\int_{1/\sqrt 2}^{+\infty}\int_{e^{2\pr{d+1}}}^{+\infty}x \pr{
\log\pr{1+x}}^rv\PP\ens{
Z_i>\frac{x}{\sqrt{\ln x}}\frac v2
}    \mathrm dx\mathrm dv 
\right).
\end{multline}
Bounding the integral over $x$ by the corresponding one on $\pr{2,+\infty}$, cutting 
this interval into intervals of the form $\left(2^k,2^{k+1}\right]$, we end up with the 
inequality 
\begin{equation}\label{eq:recurrence_moments_etape_intermediaire_3}
\E{\varphi_{2,r}\pr{Z}   }\leq c \left(1+\E{\varphi_{2,r}\pr{\abs{m}}}+
\sum_{i=1}^d\int_{1/\sqrt 2}^{+\infty}\sum_{k=1}^{+\infty}2^{k } 
k^rv\PP\ens{
Z_i>\frac{2^k}{\sqrt{k}}\frac v2
}    \mathrm dv 
\right).
\end{equation}
We apply Lemma~\ref{lem:control_somme_probas_par_Lpq}
for a fixed $i\in\ens{1,\dots,d}$ and $v>1/\sqrt 2$ to $X:= 2Z_i/v$ in order to 
obtain 
\begin{multline}\label{eq:recurrence_moments_etape_intermediaire_4}
\E{\varphi_{2,r}\pr{Z}   }\leq c \Big(1+\E{\varphi_{2,r}\pr{\abs{m}}} \\
 +
\sum_{i=1}^d\int_{1/\sqrt 2}^{+\infty} v\E{\pr{2Z_i/v}^2 
\pr{\ln\pr{1+2Z_i/v   }}^{r+1}\mathbf{1}\ens{2Z_i/v>1   }
 }   \mathrm dv 
\Big).
\end{multline}

Switching the integral and the expectation,  we first have to bound the random variable 
\begin{equation}
Z'_i:=\int_{1/\sqrt 2}^{+\infty}v\pr{2Z_i/v}^2 
\pr{\ln\pr{1+2Z_i/v   }}^{r+1}\mathbf{1}\ens{2Z_i/v>1   }\mathrm dv.
\end{equation}
A first observation is that if $2Z_i<1/\sqrt 2$, the random variable $Z'_i$ vanishes, 
hence 
\begin{equation}
Z'_i\leq \mathbf{1}\ens{2Z_i>1/\sqrt 2   }\int_{1/\sqrt 2}^{2Z_i}v\pr{2Z_i/v}^2 
\pr{\ln\pr{1+2Z_i/v   }}^{r+1}\mathrm dv.
\end{equation}

\begin{equation}
Z'_i\leq \mathbf{1}\ens{2Z_i>1/\sqrt 2   }\pr{2Z_i}^2\int_{1/\sqrt 2}^{2Z_i}v^{-1}  
\pr{\ln\pr{1+2Z_i/v   }}^{r+1}\mathrm dv
\end{equation}
and the integral is bound by a constant depending on $r$ times 
$\pr{\ln\pr{1+Z_i}}^{r+2}$, hence 
\begin{equation}
Z'_i\leq  c Z_i^2\pr{\ln\pr{1+Z_i}}^{r+2}.
\end{equation}

We obtain 
\begin{equation}\label{eq:control_moment_phi2r_de_Z}
\E{\varphi_{2,r}\pr{Z}   }\leq c \pr{1+\E{\varphi_{2,r}\pr{\abs{m}}}
+\max_{1\leq i\leq d} \E{\varphi_{2,r+2}\pr{Z_i} }
}.
\end{equation}

 Consider $\lambda\geq \max_{1\leq i\leq d}\norm{Z_i}_{2,r+2}$ and also greater 
 than $\norm{m}_{2,r}$. Replacing $m$ by $m/\lambda$ 
in \eqref{eq:control_moment_phi2r_de_Z}  yields 
 \begin{equation}
 \E{\varphi_{2,r}\pr{Z/\lambda}   }\leq 3c.
 \end{equation}
Letting $\varphi:=\varphi_{2,r}/\pr{3c}$ gives that 
\begin{equation}
\norm{Z}_{\varphi}\leq \max_{1\leq i\leq d}\norm{Z_i}_{2,r+2}+\norm{m}_{2,r}.
\end{equation}
Then an application of Lemma~\ref{lem:norme_Orlicz_c_fois_fct_YOug} gives
\begin{equation}
\norm{Z}_{2,r}\leq c\pr{\max_{1\leq i\leq d}\norm{Z_i}_{2,r+2}+\norm{m}_{2,r}}.
\end{equation}
Finally, noticing that $\norm{m}_{2,r}\leq \norm{Z_1}_{2,r+2}$ gives 
\eqref{eq:control_moment_phi2r_de_Z}.   This ends 
the proof of Lemma~\ref{lem:recurrence_moments_2}.
\end{proof}
 
\begin{proof}[End of the proof of Theorem~\ref{thm:LIL_orthomartingales}]
We start by proving \eqref{eq:norme_2_fct_max_orthomartingale} by induction 
on the dimension. For $d=1$, this follows from Lemma~\ref{lem:recurrence_moments_2}. 

Assume that \eqref{eq:norme_2_fct_max_orthomartingale} holds
 for all stationary orthomartingale difference $\pr{d-1}$-dimensional random 
fields (with $d\geq 2$) and all $r\geq 0$. Using Lemma~\ref{lem:recurrence_moments_2}, 
we get that for all $d$ dimensional strictly stationary orthomartingale difference 
random fields,   
$\norm{M}_{2,r}\leq c_{r,d}\max_{1\leq i\leq d}\norm{Z_i}_{2,r+2}$.
By the induction hypothesis applied with $\widetilde{r}:=r+2$, we get that 
 $\norm{Z_i}_{2,r+2}\leq 
\norm{m}_{2,r+2+ 2\pr{d-1}}$, which
 gives \eqref{eq:norme_2_fct_max_orthomartingale}. 

Let us show \eqref{eq:norme_p_fct_max_orthomartingale}.
By Lemma~\ref{lem:recurrence_moments_p}, we derive that 
$\norm{M}_p\leq c_{p,d}\max_{1\leq i\leq d}\norm{Z_i}_{2}$.
Using \eqref{eq:norme_2_fct_max_orthomartingale},
 we derive that  
$\norm{Z_i}_{2}\leq \norm{m}_{2, 2\pr{d-1}}$, from which 
\eqref{eq:norme_p_fct_max_orthomartingale} follows.

This ends the proof of Theorem~\ref{thm:LIL_orthomartingales}.
\end{proof}

\subsection{Proof of Theorem~\ref{thm:Hannan_con}}

The conditions of theorem imply that $f=\lim_{N\to +\infty}
\sum_{-N\gr{1}\imd \gri\imd N\gr{1}}\pi_{\gri}\pr{f}$ almost surely.
Therefore, for each $\grn\in\N^d$, inequalities
\begin{equation}
\abs{S_{\grn}\pr{f}}
\leq \sum_{-N\gr{1}\imd \gri\imd N\gr{1}}\abs{S_{\grn}\pr{   \pi_{\gri}\pr{f}}}
\leq \sum_{ \gri\in\Z^d}\abs{S_{\grn}\pr{   \pi_{\gri}\pr{f}}}
\end{equation}
hold almost surely. Hence 
\begin{equation}
\norm{M\pr{f}}_p\leq \sum_{\gri\in\Z^d}
\norm{  M\pr{   \pi_{\gri }\pr{f}}}_p.
\end{equation}
Since $\pr{U^{\grj} \pi_{\gri }\pr{f}}_{\grj\in\Z^d}$ is an orthomartingale 
difference random field with respect to the completely commuting filtration 
$\pr{T^{-\grj-\gri}\f_{\gr{0}}    }_{\gri\in\Z^d}$, an application of 
Theorem~\ref{thm:LIL_orthomartingales} ends the proof of 
Theorem~\ref{thm:Hannan_con}.

\subsection{Proof of the results of Section~\ref{sec:MW}}

\begin{proof}[Proof of Proposition~\ref{prop:inegalite_presque_sure_dim_d}]
The proof with be done by induction on the dimension $d$. 

In dimension $1$, the right hand side of \eqref{eq:almost_sure_inequality_dim_d} reads 
\begin{equation}\label{eq:membre_de_droite_dim1}
\sum_{k=0}^n\max_{1\leq i\leq 2^{n-k}}
\abs{\sum_{\ell=0}^{i-1} d_{k,\ens{1}}\circ T^{2^{k_1}\ell}     }
+\sum_{k=0}^n\max_{0\leq \ell\leq 2^{n-k}}
\abs{  d_{k,\emptyset}\circ T^{2^{k_1}\ell}     },
\end{equation}
where 
\begin{equation}
d_{k,\emptyset}= \proj{S_{2^k}\pr{f}}{-2^k  },
\end{equation}
\begin{equation}
d_{0,\ens{1}}=f-\proj{f}{-1},
\end{equation}
\begin{equation}
d_{k,\ens{1}}=\proj{S_{2^k}\pr{f}}{-2^{k-1}}-
\proj{S_{2^k}\pr{f}}{-2^{k}},
\end{equation}
and the term in \eqref{eq:membre_de_droite_dim1} is greater than 
the right hand side of \eqref{eq:almost_sure_inequality_dim_1}. 

Let $d\geq 2$ and suppose 
that Proposition~\ref{prop:inegalite_presque_sure_dim_d} holds 
for all $d'$-dimensional random fields where $1\leq d'\leq d-1$. 
Let $T$ be a measure preserving $\Z^d$-action on a probability space 
$\pr{\Omega,\Fca,\mu}$. Let $\Fca_{\gr{0}}\subset\Fca$ 
be a sub-$\sigma$-algebra such that $T^{\gr{e_q}}\Fca_{\gr{0}}\subset 
\Fca_{\gr{0}}$ for all $q\in [d]$ and the filtration 
$\pr{T^{-\gri}\Fca_{\gr{0}}}_{\gri\in\Z^d}$ is commuting. Finally let $f$ be 
an $\Fca_{\gr{0}}$-measurable function and $\grn\in\N^d$. Let $\grj$ be 
such that $\gr{1}\imd\gr{j}\imd \gr{2^n}$. Observe that 
\begin{equation}\label{eq:ineg_1_demo_ineg_max}
\abs{S_{\grj}\pr{f}}\leq \max_{ 
\substack{1\leq i_q\leq 2^{n_q}\\ 
1\leq q\leq d-1
}
}\abs{S_{i_1,\dots,i_{d-1},j_d}\pr{f}}.
\end{equation} 
We apply the $d-1$ dimensional case in the following setting:
\begin{itemize}
\item $\widetilde{\grn}:=\sum_{q=1}^{d-1}n_q\gr{e_q}$;
\item $\widetilde{T}^{\gri}=T^{\sum_{q=1}^{d-1}i_q\gr{e_q}}$, $\gri\in \Z^{d-1}$;
\item $\widetilde{\Fca_{\gr{0}}}:= \bigvee_{i_d\in\Z} T^{i_d\gr{e_d}}\Fca_{\gr{0}}$;
\item $\widetilde{f}:= \sum_{\ell_d=0}^{j_d-1}f\circ T^{\ell_d\gr{e_d}}$.
\end{itemize}
In view of \eqref{eq:ineg_1_demo_ineg_max}, we obtain that 
\begin{equation}\label{eq:ineg_2_demo_ineg_max}
\abs{S_{\grj}\pr{f}}\leq \sum_{\gr{0}\imd\gr{k}\imd\widetilde{\grn}}
\sum_{I\subset [d-1]} \max_{\gr{1}-\gr{1}_{I}\imd\gri\imd 
\gr{2^{\widetilde{\grn}-k}}}\abs{S_{\gri}^I\pr{\widetilde{T}^{\gr{2^k}   },
\widetilde{d_{\grk,I}   }  }},
\end{equation}
where 
\begin{equation}
\widetilde{d_{\grk,I}   } := 
\sum_{I''\subset    I\setminus Z\pr{\grk}}
\sum_{I'\subset  I\cap Z\pr{\grk} }
\pr{-1}^{\abs{I'}+\abs{I''}}
\proj{S_{2^{\grk+\gr{1_{Z\pr{k}}}-\gr{1_I}}}\pr{ \widetilde{f}}  }{-2^{\grk -\gr{1_{I''}}}-\gr{1_{I'}}+\infty \gr{1_{\ens{d}}   }}.
\end{equation}
Observe that for all $\ell_1,\dots,\ell_{d-1}\in\Z$, the $\sigma$-algebra 
$\Fca_{\ell_1,\dots,\ell_{d-1},\infty}$ is invariant by $T^{\gr{e_d}}$. 
Consequently, we can write 
\begin{equation}
\widetilde{d_{\grk,I}   } =\sum_{\ell_d=0}^{j_d-1}f_{\grk,I}\circ 
T^{\ell_d \gr{e_d}},
\end{equation}
where 
\begin{equation}
f_{\grk,I}=\sum_{I''\subset    I\setminus Z\pr{\grk}}
\sum_{I'\subset  I\cap Z\pr{\grk} }
\pr{-1}^{\abs{I'}+\abs{I''}}
\proj{S_{2^{\grk+\gr{1_{Z\pr{k}}}-\gr{1_I}}}\pr{ f}  }{-2^{\grk -\gr{1_{I''}}}-\gr{1_{I'}}+\infty \gr{1_{\ens{d}}   }}.
\end{equation}
Using commutativity of the filtration $\pr{T^{-\gri}\Fca_{\gr{0}}}_{\gri\in\Z^d}$ and 
$\Fca_{\gr{0}}$-measurabiility of $f$, we derive that 
\begin{equation}
\proj{S_{2^{\grk+\gr{1_{Z\pr{k}}}-\gr{1_I}}}\pr{ f}  }{-2^{\grk -\gr{1_{I''}}}-
\gr{1_{I'}}+\infty \gr{1_{\ens{d}}   }}=
\proj{S_{2^{\grk+\gr{1_{Z\pr{k}}}-\gr{1_I}}}\pr{ f}  }{-2^{\grk -\gr{1_{I''}}}-\gr{1_{I'}}
  }.
\end{equation}
Hence 
\begin{equation}
f_{\grk,I}=\sum_{I''\subset    I\setminus Z\pr{\grk}}
\sum_{I'\subset  I\cap Z\pr{\grk} }
\pr{-1}^{\abs{I'}+\abs{I''}}
\proj{S_{2^{\grk+\gr{1_{Z\pr{k}}}-\gr{1_I}}}\pr{ f}  }{-2^{\grk -\gr{1_{I''}}}-\gr{1_{I'}}}.
\end{equation}
Since for all $I\subset [d-1]$ and $\gri\in \Z^{d-1}$, the operators 
$S_{\gri}^I$ and $T^{\gr{e_d}}$ commute, the previous rewriting of 
$\widetilde{d_{\grk,I}   }$ combined with \eqref{eq:ineg_2_demo_ineg_max} yields 
\begin{equation}
\label{eq:ineg_3_demo_ineg_max}
\abs{S_{\grj}\pr{f}}\leq \sum_{\gr{0}\imd\gr{k}\imd\widetilde{\grn}}
\sum_{I\subset [d-1]} \max_{\gr{1}-\gr{1}_{I}\imd\gri\imd 
\gr{2^{\widetilde{\grn}-k}}}\abs{
\sum_{\ell_d=0}^{j_d-1}\pr{
S_{\gri}^I\pr{\widetilde{T}^{\gr{2^k}   },
f_{\grk,I}     } } \circ 
T^{\ell_d \gr{e_d}}   }.
\end{equation}
Fix $\grk\in\Z^d$ such that $\gr{0}\imd\gr{k}\imd\widetilde{\grn}$, 
$\sum_{I\subset [d-1]}$, $\gri\in\Z^d$ such that 
$\gr{1}-\gr{1}_{I}\imd\gri\imd 
\gr{2^{\widetilde{\grn}-k}}$. We apply the result of
 Proposition~\ref{prop:inegalite_presque_sure_dim_d} to the one dimensional case 
 in the following setting:
 \begin{itemize}
 \item $\widetilde{\widetilde{n}}=n_d$,
 \item $\widetilde{\widetilde{T}}^{\ell_d}=T^{\ell_d\gr{e_d}}$, 
 \item $\widetilde{\widetilde{\Fca_{\gr{0}   }}}:= \Fca_{\infty\gr{1_{[d-1]}}   }$,
 \item $\widetilde{\widetilde{f}}:=S_{\gri}^I\pr{\widetilde{T}^{\gr{2^k}   },
f_{\grk,I}     } $.
 \end{itemize}
We get 
\begin{multline}
\abs{
\sum_{\ell_d=0}^{j_d-1}\pr{
S_{\gri}^I\pr{\widetilde{T}^{\gr{2^k}   },
f_{\grk,I}     } } \circ 
T^{\ell_d \gr{e_d}}   }\leq \sum_{k_d=0}^{n_d}
\max_{1\leq i_d\leq 2^{n_d-k_d}}
\abs{\sum_{\ell_d=0}^{i_d-1} \widetilde{\widetilde{d_{k_d,\ens{d}}}    }    }\\
+\sum_{k_d=0}^{n_d}
\max_{0\leq  \ell_d\leq 2^{n_d-k_d}}
\abs{  \widetilde{\widetilde{d_{k_d,\emptyset}}    }\circ T^{2^{k_d}\ell_d}    },
\end{multline}
where 
\begin{equation}
\widetilde{\widetilde{d_{0,\ens{d}}}    }=
\proj{ \widetilde{\widetilde{f}}      }{ \infty\gr{1_{[d-1]}}     }
-\proj{ \widetilde{\widetilde{f}}      }{ \infty\gr{1_{[d-1]}} -\gr{e_d}    },
\end{equation}
\begin{equation}
\widetilde{\widetilde{d_{k_d,\ens{d}}}    }=
\proj{ S_{2^{k_d} \gr{e_d}   }   \pr{ \widetilde{\widetilde{f}}}      }{ \infty\gr{1_{[d-1]}} -2^{k_d-1}
\gr{e_d}    }
-\proj{ S_{2^{k_d}\gr{e_d}    }\pr{ \widetilde{\widetilde{f}}}    }{ \infty\gr{1_{[d-1]}} -2^{k_d}\gr{e_d}    },
\end{equation}
\begin{equation}
\widetilde{\widetilde{d_{k_d,\emptyset}}    }=
\proj{ S_{2^{k_d} \gr{e_d}   }\pr{ \widetilde{\widetilde{f}}}       }{ \infty\gr{1_{[d-1]}} -2^{k_d}\gr{e_d}    }.
\end{equation}
Simplifying the expression gives the wanted result.
\end{proof}

\begin{proof}[Proof of Theorem~\ref{thm:MW_condition}]
Starting from Proposition~\ref{prop:inegalite_presque_sure_dim_d}, we derive that 
\begin{equation}\label{eq:demo_MW_controle_de_Mf}
M\pr{f}\leq c_d\sup_{\grn\smd \gr{0}} \frac 1{\abs{\gr{2^n}}^{1/2}\prod_{q=1}^d \pr{L\pr{n_q}}^{1/2}}
\sum_{\gr{0}\imd\gr{k}\imd\gr{n}}
\sum_{I\subset [d]}
\max_{\gr{1}-\gr{1}_{I}\imd\gri\imd 
\gr{2^{n-k}}}\abs{S_{\gri}^I\pr{T^{\gr{2^k}   },d_{\grk,I}   }  },
\end{equation}
where $d_{\grk,I}$ is given by \eqref{eq:definition_de_dkI}. For each $I\subset [d]$ and each
 $\grk$ such that $
 \gr{0}\imd\gr{k}\imd\gr{n}$, the following inequalities take place:
 \begin{multline}
\frac 1{\abs{\gr{2^n}}^{1/2}\prod_{q=1}^d \pr{L\pr{n_q}}^{1/2}} \max_{\gr{1}-\gr{1}_{I}\imd\gri\imd 
\gr{2^{n-k}}}\abs{S_{\gri}^I\pr{T^{\gr{2^k}   },d_{\grk,I}   }  }\\ 
\leq\frac 1{\abs{\gr{2^k}}^{1/2}} \frac 1{\abs{\gr{2^{n-k}}}^{1/2}\prod_{q=1}^d \pr{L\pr{n_q-k_q}}^{1/2}} \max_{\gr{1}-\gr{1}_{I}\imd\gri\imd 
\gr{2^{n-k}}}\abs{S_{\gri}^I\pr{T^{\gr{2^k}   },d_{\grk,I}   }  }\\
\leq \frac 1{\abs{\gr{2^k}}^{1/2}}
\sup_{\grm \smd\gr{0}}\frac 1{\abs{\gr{2^{m}}}^{1/2}\prod_{q=1}^d \pr{L\pr{m_q}}^{1/2}} \max_{\gr{1}-\gr{1}_{I}\imd\gri\imd 
\gr{2^{m}}}\abs{S_{\gri}^I\pr{T^{\gr{2^k}   },d_{\grk,I}   }  },
 \end{multline}
 and combining with \eqref{eq:demo_MW_controle_de_Mf}, we derive that 
 \begin{equation}
 M\pr{f}\leq c_d \sum_{I\subset [d]}\sum_{\grk \smd\gr{0}}
 \frac 1{\abs{\gr{2^k}}^{1/2}}
\sup_{\grm \smd\gr{0}}\frac 1{\abs{\gr{2^{m}}}^{1/2}\prod_{q=1}^d \pr{L\pr{m_q}}^{1/2}} \max_{\gr{1}-\gr{1}_{I}\imd\gri\imd 
\gr{2^{m}}}\abs{S_{\gri}^I\pr{T^{\gr{2^k}   },d_{\grk,I}   }  }.
 \end{equation}
 Consequently, 
\begin{equation}
\norm{  M\pr{f}}_{p,w}  \leq c_d \sum_{I\subset [d]}\sum_{\grk \smd\gr{0}}
 \frac 1{\abs{\gr{2^k}}^{1/2}}c_{k,I}, 
\end{equation} 
 where 
\begin{equation}
c_{k,I}:=\norm{\sup_{\grm \smd\gr{0}}\frac 1{\abs{\gr{2^{m}}}^{1/2}\prod_{q=1}^d \pr{L\pr{m_q}}^{1/2}} \max_{\gr{1}-\gr{1}_{I}\imd\gri\imd 
\gr{2^{m}}}\abs{S_{\gri}^I\pr{T^{\gr{2^k}   },d_{\grk,I}   }  }}_{p,w}.
\end{equation} 
 We cannot directly apply Theorem~\ref{thm:LIL_orthomartingales} because of the 
 particular partial sums $S_{\gri}^I$ defined in \eqref{eq:sommes_partielles_dans_une_direction}. 
 First, it suffices to control $c_{k,I}$ when $I=[d]\setminus [i]$, $0\leq i\leq d$. The general case
 can be deduced form this one by permuting the roles of the operators $T^{\gr{e_q}}$. 
 
 We first consider the case where $i=d$. Then $I$ is the empty set and due to the definition 
 given by \eqref{eq:sommes_partielles_dans_une_direction}, there is no summation. Therefore, 
 \begin{equation}
c_{k,\emptyset}=\norm{\sup_{\grm \smd\gr{0}}\frac 1{\abs{\gr{2^{m}}}^{1/2}\
\prod_{q=1}^d \pr{L\pr{m_q}}^{1/2}} \max_{\gr{1}-\gr{1}_{I}\imd\gri\imd 
\gr{2^{m}}}\abs{ d_{\grk,I} \circ T^{\gr{2^k\cdot \gri }  }  }}_{p,w},
\end{equation} 
 and for a fixed $x$, 
 \begin{equation}
 \PP\ens{\sup_{\grm \smd\gr{0}}\frac 1{\abs{\gr{2^{m}}}^{1/2}}
\prod_{q=1}^d \pr{L\pr{m_q}}^{1/2} \max_{\gr{1}-\gr{1}_{I}\imd\gri\imd 
\gr{2^{m}}}\abs{ d_{\grk,\emptyset} \circ T^{\gr{2^k\cdot \gri }  }  }>x}\leq 
\sum_{\grm \smd\gr{0}}\abs{\gr{2^{m}}}\PP\ens{ 
\abs{ d_{\grk,\emptyset}}>x\abs{\gr{2^{m}}}^{1/2}
}.
 \end{equation}
Now, taking into account the fact for a fixed $k$, the number of 
elements of $\N^d$ such that whose sum is $N$ is bounded by $C_dN^{d-1}$, 
an application of \eqref{eq:somme_2puisk_kpuisq} in Lemma~\ref{lem:control_somme_probas_par_Lpq} 
gives that 
\begin{equation}
c_{k,\emptyset}\leq \norm{d_{\grk,\emptyset}}_{2,d-1}
\end{equation}

 Assume now that  $1\leq i\leq d-1$.    Let 
 \begin{equation}
 Y:= \sup_{m_{i+1},\dots,m_d\geq 0}2^{-\frac 12\sum_{q=i+1}^dm_q}\prod_{q=i+1}^d 
 \pr{L\pr{m_q}}^{-1/2}
 \max_{j_{i+1},\dots,j_d}\abs{\sum_{\ell_{i+1}=0}^{j_{i+1}-1}   \dots 
 \sum_{\ell_{d}=0}^{j_{d}-1}  U^{\sum_{u=i+1}^d 
 2^{k_{u}}\ell_{u}\gr{e_{u}}      }       d_{\grk,I}   } .
\end{equation}  
 With this notation, the inequality 
\begin{equation}
c_{k,I}\leq 
\norm{\sup_{m_1,\dots,m_i\geq 0} 2^{-\frac{m_1+\dots+m_i}2}
\prod_{q=1}^iL\pr{m_i}^{-1/2} \max_{ 0\leq  i_q\leq 2^{m_q}, 1\leq q\leq i
 } Y \circ T^{\sum_{q=1}^i i_q\gr{e_q}  }    }_{p,w}
\end{equation} 
holds 
and with the same arguments as before, it follows that 
\begin{equation}
c_{k,I}\leq  C_{p,d}\norm{Y}_{2,i-1}. 
\end{equation}
Now, we can apply Theorem~\ref{thm:LIL_orthomartingales} to $\widetilde{d}:=d- i$, 
$m=   d_{\grk,I} $, $\widetilde{T}^{\gri}=T^{\gr{2^k}\gr{i}}$ and $r=i-1$ to get that 
\begin{equation}
c_{k,I}\leq  C_{p,d}\norm{ d_{\grk,I}  }_{2,i-1+2\pr{d-i}}
=  C_{p,d}\norm{ d_{\grk,I}  }_{2,2d-i-1}\leq  C_{p,d}\norm{ d_{\grk,I}  }_{2,2\pr{d-1}}.
\end{equation}
When $I=[d]$ we can directly apply Theorem~\ref{thm:LIL_orthomartingales}. In total, we get that 
\begin{equation}
\norm{  M\pr{f}}_{p,w} \leq c_{p,d} \sum_{\grk \smd \gr{0}} \sum_{I\subset [d]}
\abs{\gr{2^k}  }^{-1/2}\norm{ d_{\grk,I}  }_{2,2\pr{d-1}}.
\end{equation}
Now, keeping in mind the definition of $d_{\grk,I}$ given by \eqref{eq:definition_de_dkI}, the 
following inequality takes place
\begin{equation}
\norm{ d_{\grk,I}  }_{2,2\pr{d-1}}\leq \abs{I}^2 
\norm{\proj{S_{2^{\grk+\gr{1_{Z\pr{k}}}-\gr{1_I}}}\pr{f}  }{0} }_{2,2\pr{d-1}}.
\end{equation}
Now, using the fact that $\norm{\proj{S_{2^{\grk+\gr{1_{Z\pr{k}}}-\gr{1_I}}}\pr{f}  }{0} }_{2,2\pr{d-1}}
\leq c_d\norm{\proj{S_{2^{\grk  }}\pr{f}  }{0} }_{2,2\pr{d-1}}+c_d \norm{f}_{2,2\pr{d-1}}$, 
we derive that 
\begin{equation}
\norm{  M\pr{f}}_{p,w}\leq c_{p,d}   \sum_{\grk \smd \gr{0}}
\abs{\gr{2^k}  }^{-1/2}
\norm{\proj{S_{2^{\grk  }}\pr{f}  }{0} }_{2,2\pr{d-1}}.
\end{equation}
Now, we have to bound the series in the right hand side of the previous equation in terms 
of right hand side of \eqref{eq:inequalite_LLI_MW}. To this aim, we define for fixed 
$k_1,\dots,k_{d-1}\geq 1$ the quantity 
\begin{equation}
V_n^{\pr{d}}:=\norm{ \proj{S_{2^{k_1},\dots,2^{k_{d-1}},n}\pr{f}}{0}}_{2,2\pr{d-1}}.
\end{equation}
Then the sequence $\pr{V_n^{\pr{d}}}_{n\geq 1}$ is subadditive. Therefore, 
by Lemma~2.7 in \cite{MR2123210}, 
\begin{multline}
 \sum_{\grk \smd \gr{0}}
\abs{\gr{2^k}  }^{-1/2}
\norm{\proj{S_{2^{\grk  }}\pr{f}  }{0} }_{2,2\pr{d-1}}\\ \leq 
C_d \sum_{k_1,\dots,k_{d-1}}\sum_{n\geq 1}2^{-\frac 12\pr{k_1+\dots+k_{d-1}}}
\frac 1{n_d^{3/2}}\norm{ \proj{S_{2^{k_1},\dots,2^{k_{d-1}},n_d}\pr{f}}{0}}_{2,2\pr{d-1}}.
\end{multline}
Then defining for a fixed $n_d$ and fixed $k_1,\dots,k_{d-2}$ the sequence 
\begin{equation}
V_n^{\pr{d-1}}:=\norm{ \proj{S_{2^{k_1},\dots,2^{k_{d-2}},n_{d-1},n_d}\pr{f}}{0}}_{2,2\pr{d-1}},
\end{equation}
we get an other subadditive sequence. By repeating this argument, we 
end the proof of Theorem~\ref{thm:MW_condition}.
\end{proof}
\begin{proof}[Proof of Corollary~\ref{cor:application_champs_lineaires_cond_proj}]
Observe that $\pi_{\grj}\pr{f}=a_{\grj}m$, hence \eqref{eq:Hannan_processus_lineaires} 
follows.

In order to prove \eqref{eq:MW_processus_lineaires}, we first have to simplify 
$\proj{S_n\pr{f}}{\gr{0}}$. First, 
\begin{equation}
S_{\grn}\pr{f}=\sum_{\gr{0}\imd \gri\imd \grn-\gr{1}}\sum_{\grj\smd \gr{0}}
a_{\grj} m\circ T^{\gri-\grj}
= \sum_{\gr{0}\imd \gri\imd \grn-\gr{1}}\sum_{\gr{\ell}\imd 
\gr{i}}
a_{ \gri - \gr{\ell}  } m\circ T^{\gr{\ell}}.
\end{equation}
 
By conditioning with respect to $\Fca_{\gr{0}}$, only the terms with index $\gr{\ell}\imd\gr{0}$ 
remain. Hence 
\begin{equation}
\proj{S_n\pr{f}}{\gr{0}}=\sum_{\gr{0}\imd \gri\imd \grn-\gr{1}}\sum_{\gr{\ell}\imd 
\gr{0}}
a_{ \gri - \gr{\ell}  } m\circ T^{\gr{\ell}}.
\end{equation}
By using a combination of Lemmas~3.1 and 6.1 in \cite{MR0365692}, we derive that for all martingale 
difference sequence $\pr{d_j}_{j\geq 1}$, 
\begin{equation}
 \norm{ \sum_{j=1}^nd_j  }^2_{2,2\pr{d-1}}\leq 
 C_d \sum_{j=1}^n\norm{d_j  }^2_{2,2\pr{d-1}}.
\end{equation}
By induction on the dimension, this can be extended to sum 
of orthomartingales differences on a rectangle, then 
by the use of Fatou's lemma, we can apply this to summation 
on $\Z$. This gives 

\begin{equation}
\norm{\proj{S_n\pr{f}}{\gr{0}}}_{2,2\pr{d-1}}\leq  C_d 
\pr{
\sum_{\gr{\ell}\imd 
\gr{0}}\norm{   \sum_{\gr{0}\imd \gri\imd \grn-\gr{1}}
a_{ \gri - \gr{\ell}  } m\circ T^{\gr{\ell}} }_{2,2\pr{d-1}}
^2}^{1/2}.
\end{equation}
Since $T^{\gr{\ell}}$ is measure preserving, the following 
equality holds 
\begin{equation}
 \norm{   \sum_{\gr{0}\imd \gri\imd \grn-\gr{1}}
a_{ \gri - \gr{\ell}  } m\circ T^{\gr{\ell}} }_{2,2\pr{d-1}}^2
=\pr{\sum_{\gr{0}\imd \gri\imd \grn-\gr{1}}
a_{ \gri - \gr{\ell}  }}^2\norm{m}_{2,2\pr{d-1}}^2.
\end{equation}
We can conclude from Theorem~\ref{thm:MW_condition}.
This ends the proof of 
Corollary~\ref{cor:application_champs_lineaires_cond_proj}.
\end{proof}

\begin{proof}[Proof of Corollary~\ref{cor:application_champs_multi_lineaires_cond_proj}]
It suffices to prove the representation of $f$ given by \eqref{eq:representation_de_f}; then 
inequalities \eqref{eq:Hannan_processus_multi_lineaires} and \eqref{eq:MW_processus_multi_lineaires} follow 
from an application of Corollary~\ref{cor:application_champs_lineaires_cond_proj} to each linear process involved in 
\eqref{eq:representation_de_f}. The assumptions imply that 
\begin{equation}\label{eq:dec_of_f_as_proj}
f=\sum_{\grj\smd \gr{0}} \pi_{-\grj}\pr{f},
\end{equation}
where $\pi_{-\grj}$ is defined by \eqref{eq:definition_projectors}. Observe that 
$U^{\grj}\pi_{-\grj}\pr{f}$ belongs to the space $W_d$ defined by \eqref{eq:dfn_espace_W_d}. Therefore, this function admits 
the representation 
\begin{equation}
U^{\grj}\pi_{-\grj}\pr{f}=\sum_{k=1}^{+\infty}a_{k,\grj}\pr{f} e_k.
\end{equation}
Plugging this equality in \eqref{eq:dec_of_f_as_proj} gives \eqref{eq:representation_de_f}.
\end{proof}
\textbf{Acknowledgement} Research supported by the DFG Collaborative Research Center SFB 823 `Statistical modelling of nonlinear dynamic processes'.

The author would like to thank the referee for many comments that improved 
the readability of the paper.

%%%%%%%%%%%%%%%%%%%%%%%%%%%%%%%%%%%%%%%%%%%%%%%%%%%%%%%%%%%%%%%%%%%%%%%%
%%%%%%%%%%%%%%%%%%%%%%%%%%%%%%%%%%%%%%%%%%%%%%%%%%%%%%%%%%%%%%%%%%%%%%%%
%%%%%%%%%%%%%%%%%%%%%%%%%%%%%%%%%%%%%%%%%%%%%%%%%%%%%%%%%%%%%%%%%%%%%%%%

\def\polhk\#1{\setbox0=\hbox{\#1}{{\o}oalign{\hidewidth
  \lower1.5ex\hbox{`}\hidewidth\crcr\unhbox0}}}\def\cprime{$'$}
  \def\polhk#1{\setbox0=\hbox{#1}{\ooalign{\hidewidth
  \lower1.5ex\hbox{`}\hidewidth\crcr\unhbox0}}} \def\cprime{$'$}
\providecommand{\bysame}{\leavevmode\hbox to3em{\hrulefill}\thinspace}
\providecommand{\MR}{\relax\ifhmode\unskip\space\fi MR }
% \MRhref is called by the amsart/book/proc definition of \MR.
\providecommand{\MRhref}[2]{%
  \href{http://www.ams.org/mathscinet-getitem?mr=#1}{#2}
}
\providecommand{\href}[2]{#2}


\begin{thebibliography}{CDV15}

\bibitem[BT08]{MR2462551}
B. Bercu and A. Touati, \emph{Exponential inequalities for
  self-normalized martingales with applications}, Ann. Appl. Probab.
  \textbf{18} (2008), no.~5, 1848--1869. \MR{2462551}

\bibitem[Bur73]{MR0365692}
D.~L. Burkholder, \emph{Distribution function inequalities for martingales},
  Ann. Probab. \textbf{1} (1973), 19--42. \MR{0365692 (51 \#1944)}

\bibitem[CDV15]{MR3504508}
Ch. Cuny, J.~Dedecker, and D.~Voln\'{y}, \emph{A functional {CLT} for fields of
  commuting transformations via martingale approximation}, Zap. Nauchn. Sem.
  S.-Peterburg. Otdel. Mat. Inst. Steklov. (POMI) \textbf{441} (2015),
  no.~Veroyatnost\cprime i Statistika. 22, 239--262. \MR{3504508}

\bibitem[Cun15]{MR3322323}
Ch. Cuny, \emph{A compact {LIL} for martingales in 2-smooth {B}anach
  spaces with applications}, Bernoulli \textbf{21} (2015), no.~1, 374--400.
  \MR{3322323}

\bibitem[Cun17]{MR3650410}
\bysame, \emph{Invariance principles under the {M}axwell-{W}oodroofe condition
  in {B}anach spaces}, Ann. Probab. \textbf{45} (2017), no.~3, 1578--1611.
  \MR{3650410}

\bibitem[FGL15]{MR3311214}
X. Fan, I. Grama, and Q. Liu, \emph{Exponential inequalities for
  martingales with applications}, Electron. J. Probab. \textbf{20} (2015), no.
  1, 22. \MR{3311214}

\bibitem[FGL17]{MR3734020}
\bysame, \emph{Martingale inequalities of type {D}zhaparidze and van {Z}anten},
  Statistics \textbf{51} (2017), no.~6, 1200--1213. \MR{3734020}

\bibitem[Gir18]{MR3869881}
D. Giraudo, \emph{Invariance principle via orthomartingale approximation},
  Stoch. Dyn. \textbf{18} (2018), no.~6, 1850043, 29. \MR{3869881}

\bibitem[Gir19a]{giraudo2019bounded}
D. Giraudo, \emph{Bound on the maximal function associated to the law of
  the iterated logarithms for {B}ernoulli random fields}, 2019.

\bibitem[Gir19b]{giraudo2019_deviation} 
D. Giraudo, \emph{Deviation inequalities for {B}anach space valued martingales
              differences sequences and random fields}, ESAIM Probab. Stat. 
   \textbf{23} (2019), 922--946. \MR{4046858},
 
\bibitem[Jia99]{MR1674964}
J. Jiang, \emph{Some laws of the iterated logarithm for two parameter
  martingales}, J. Theoret. Probab. \textbf{12} (1999), no.~1, 49--74.
  \MR{1674964 (2000e:60046)}

\bibitem[Kre85]{MR797411}
U. Krengel, \emph{Ergodic theorems}, De Gruyter Studies in Mathematics,
  vol.~6, Walter de Gruyter \& Co., Berlin, 1985, With a supplement by Antoine
  Brunel. \MR{797411}

\bibitem[Pis76]{MR0501237}
G. Pisier, \emph{Sur la loi du logarithme it\'{e}r\'{e} dans les espaces de
  {B}anach}, Probability in {B}anach spaces ({P}roc. {F}irst {I}nternat.
  {C}onf., {O}berwolfach, 1975), 1976, pp.~203--210. Lecture Notes in Math.,
  Vol. 526. \MR{0501237}

\bibitem[PU05]{MR2123210}
M. Peligrad and Sergey Utev, \emph{A new maximal inequality and invariance
  principle for stationary sequences}, Ann. Probab. \textbf{33} (2005), no.~2,
  798--815. \MR{2123210 (2005m:60047)}

\bibitem[PZ18a]{MR3798239}
M. Peligrad and Na~Zhang, \emph{Martingale approximations for random
  fields}, Electron. Commun. Probab. \textbf{23} (2018), Paper No. 28, 9.
  \MR{3798239}

\bibitem[PZ18b]{MR3769664}
\bysame, \emph{On the normal approximation for random fields via martingale
  methods}, Stochastic Process. Appl. \textbf{128} (2018), no.~4, 1333--1346.
  \MR{3769664}

\bibitem[Vol15]{MR3427925}
D. Voln{\'y}, \emph{A central limit theorem for fields of martingale
  differences}, C. R. Math. Acad. Sci. Paris \textbf{353} (2015), no.~12,
  1159--1163. \MR{3427925}

\bibitem[Vol18]{VOLNY2018}
D. Volný, \emph{On limit theorems for fields of martingale differences},
  Stochastic Process. Appl. \textbf{129} (2019), no.~3, 841--859. \MR{3913270}

\bibitem[VW14]{MR3264437}
D. Voln\'{y} and Yizao Wang, \emph{An invariance principle for stationary
  random fields under {H}annan's condition}, Stochastic Process. Appl.
  \textbf{124} (2014), no.~12, 4012--4029. \MR{3264437}

\bibitem[Wic73]{MR0394894}
M.~J. Wichura, \emph{Some {S}trassen-type laws of the iterated logarithm
  for multiparameter stochastic processes with independent increments}, Ann.
  Probab. \textbf{1} (1973), 272--296. \MR{0394894 (52 \#15693)}

\bibitem[WW13]{MR3222815}
Y. Wang and M. Woodroofe, \emph{A new condition for the invariance
  principle for stationary random fields}, Statist. Sinica \textbf{23} (2013),
  no.~4, 1673--1696. \MR{3222815}

\bibitem[ZRP18]{2018arXiv180908686Z}
N.~{Zhang}, L.~{Reding}, and M.~{Peligrad}, \emph{{On the quenched CLT for
  stationary random fields under projective criteria}}, ArXiv e-prints (2018).

\end{thebibliography}
\end{document}